\DeclareMathOperator*{\argmin}{arg\,min}
\DeclareMathOperator*{\mymax}{max}
\def\mplus{\mathrel{%
  \ooalign{\raise.29ex\hbox{$\scriptscriptstyle\mathbf{+}$}\cr}}}
\newtheorem{remark}[theorem]{remark} 
\newtheorem{assumption}[theorem]{assumption} 
\begin{document}
\title{A Second-order Method for Compressed Sensing Problems with Coherent and Redundant Dictionaries \\ \vspace{0.5cm}\textnormal{M\MakeLowercase{ay} 29, 2014}}

\author{Ioannis~Dassios\thanks{I. Dassios is with the School of Mathematics and Maxwell Institute, The University of Edinburgh, Edinburgh,
Mayfield Road, Edinburgh EH9 3JZ, United Kingdom e-mail: idassios@ed.ac.uk. I. Dassios is supported by EPSRC Grant EP/I017127/1} \and
        Kimon~Fountoulakis\thanks{K. Fountoulakis is with the School of Mathematics and Maxwell Institute, The University of Edinburgh, Edinburgh,
Mayfield Road, Edinburgh EH9 3JZ, United Kingdom e-mail: K.Fountoulakis@sms.ed.ac.uk.} \and
        Jacek~Gondzio\thanks{J. Gondzio is with the School of Mathematics and Maxwell Institute, The University of Edinburgh, Edinburgh,
Mayfield Road, Edinburgh EH9 3JZ, United Kingdom e-mail: J.Gondzio@ed.ac.uk. J. Gondzio is supported by EPSRC Grant EP/I017127/1}
}

\markboth{Journal of \LaTeX\ Class Files,~Vol.~11, No.~4, December~2012}%
{Shell \MakeLowercase{\textit{et al.}}: Bare Demo of IEEEtran.cls for Journals}

\maketitle

\begin{abstract}
In this paper we are interested in the solution of Compressed Sensing (CS) problems where the signals to be recovered are sparse in coherent and redundant dictionaries. 
CS problems of this type are convex with non-smooth and non-separable regularization term, therefore a specialized solver is required. 
We propose a primal-dual Newton Conjugate Gradients (pdNCG) method.
We prove global convergence and fast local rate of convergence for pdNCG. Moreover, well-known properties of CS problems are exploited 
for the development of provably effective preconditioning techniques that speed-up the approximate solution of linear systems which arise. 
Numerical results are presented on CS problems which demonstrate the performance of pdNCG compared to a state-of-the-art existing solver.

\end{abstract}

\begin{keywords}
compressed sensing, $\ell_1$-analysis, total-variation, second-order methods, Newton conjugate gradients
\end{keywords}

\section{Introduction}

CS is concerned with recovering signal $\tilde x\in\mathbb{R}^{n}$ by observing a linear combination of the signal
$$
\tilde b = A \tilde x,
$$
where $A\in \mathbb{R}^{m\times n}$ is an under-determined linear operator with $m < n$ 
and $\tilde b \in \mathbb{R}^{m}$ are the observed measurements. 
Although this system has infinitely many solutions, reconstruction of $\tilde x$ is possible due to its assumed properties.
In particular, $\tilde x$ is assumed to have a sparse image through a redundant and coherent dictionary 
$W\in E^{n\times l}$, where $E=$ $\mathbb{R}$ or $\mathbb{C}$ and $n \le l$. More precisely, $W^* \tilde x$, is sparse, i.e. it has only few non-zero components,
where the star superscript denotes the conjugate transpose.
If $W^* \tilde{x}$ is sparse, then 
the optimal solution of the linear $\ell_1$-analysis problem
$$
\mbox{minimize} \ \|W^* x\|_1, \quad \mbox{subject to:} \quad Ax=\tilde b
$$
is proved to be equal to $\tilde x$, where $\|\cdot\|_1$ is the $\ell_1$-norm. 

Frequently measurements $\tilde b$ might be contaminated with noise, i.e. one measures $b = \tilde b + e$ instead, 
where $e$ is a vector of noise, usually modelled as Gaussian with zero-mean and bounded Euclidean norm.
In addition, in realistic applications, $W^*\tilde x$ might not be exactly sparse, but its mass might be concentrated only on few of its components, while the rest
are rapidly decaying. 
In this case, the optimal solution of the following $\ell_1$-analysis problem
\begin{equation}\label{prob1}
\mbox{minimize} \  f_{c}(x) :=  c\|W^* x\|_1 + \frac{1}{2}\|Ax-b\|^2_2,
\end{equation}
is proved to be a good approximation to $\tilde x$. In \eqref{prob1}, $c$ is an a-priori chosen positive scalar and $\|\cdot\|_2$ is the Euclidean norm.
Discussion on conditions that guarantee the reconstruction of $\tilde x$ are restated in Subsection \ref{subsec:wrip}.

\subsection{Brief Description of CS Applications}\label{subsec:apps} 
An example of $W$ being redundant and coherent with orthonormal rows is the curvelet frame where an image is assumed to have an approximately sparse representation \cite{apps3}. 
Moreover, for radar and sonar systems it is frequent that Gabor frames are used
in order to reconstruct pulse trains from CS measurements \cite{apps4}.
For more applications a small survey is given in \cite{l1analysis}.
Isotropic Total-Variation (iTV) is another application of CS, which exploits the fact that digital images frequently have slowly varying pixels, except along edges. This property implies that digital images 
with respect to the discrete nabla operator, i.e. local differences of pixels, are approximately sparse.
For iTV applications, matrix $W\in \mathbb{C}^{n \times n}$ is square, complex and rank-deficient with $rank (W)=n-1$.
An alternative to iTV is $\ell_1$-analysis, where matrix $W$ is a Haar wavelet transform. However, it has been stated in \cite{tvrobust}, that compared to the $\ell_1$-analysis problem, a more pleasant to the eye reconstruction is obtained by solving the iTV problem.



\subsection{Conditions and Properties of Compressed Sensing Matrices}\label{subsec:wrip}

There has been an extensive amount of literature studying conditions and properties of matrices $A$ and $W$ which guarantee recoverability of a good approximation of $\tilde x$ by solving problem \eqref{prob1}.
For a thorough analysis we refer the reader to \cite{l1analysis,tvrobust}.
The previously cited papers use
a version of the well-known \textit{Restricted Isometry Property} (RIP) \cite{l1analysis}, which is repeated below.
\begin{definition}\label{def:1}
The restricted isometry constant of a matrix $A\in \mathbb{R}^{m\times n}$ adapted to $W\in {E}^{n\times l}$ is defined as the smallest $\delta_q$ such that
$$
(1-\delta_q)\|Wz\|_2^2 \le \|AWz\|_2^2 \le (1+\delta_q)\|Wz\|_2^2
$$
for all at most $q$-sparse $z\in {E}^{l}$, where $E=\mathbb{R} \mbox{ or } \mathbb{C}$.
\end{definition}

For the rest of the paper we will refer to Definition \ref{def:1} as W-RIP.
It is proved in Theorem $1.4$ in \cite{l1analysis} that if $W\in E^{n\times l}$ has orthonormal rows with $n \le l$ and if $A$, $W$ satisfy the W-RIP with $\delta_{2q} \le 8.0e$-$2$, then the solution $x_c$ obtained 
by solving problem \eqref{prob1}
satisfies
\begin{equation}\label{eq:160}
\|x_c - \tilde x\|_2 = C_0 \|e\|_2  + C_1 \frac{\|W^*x_c - (W^*\tilde x)_q\|_1}{\sqrt{q}},
\end{equation}
where $(W^*\tilde x)_q$ is the best $q$-sparse approximation of $W^*\tilde x$, $C_0$ and $C_1$ are small constants and only depend on $\delta_{2q}$. 
It is clear that $W^* \tilde x$ must have $l-q$ rapidly decaying components, in order for 
$\|x_c - \tilde x\|_2$ to be small and the reconstruction to be successful. 
iTV is a special case of $\ell_1$-analysis where matrix $W$ does not have orthonormal rows, hence, result \eqref{eq:160} does not hold. For iTV
there are no conditions on $\delta_{2q}$ such that a good reconstruction is assured. However, there exist results which directly impose restrictions on the number of measurements $m$, see Theorems $2$, $5$ and $6$ in \cite{tvrobust}.
Briefly, in these theorems it is mentioned that if $m \ge q \log(n)$ linear measurements are acquired for which matrices $A$ and $W$ satisfy the W-RIP for some $\delta_q < 1$, then, similar reconstruction guarantees 
as in \eqref{eq:160} are obtained for iTV.
Based on the previously mentioned results regarding reconstruction guarantees it is natural to assume that for iTV a similar condition applies, i.e. $\delta_{2q} < 1/2$. Hence, we make the following assumption.
\begin{assumption}\label{assum:1}
The number of nonzero components of $W^*x_c$, denoted by $q$, and the dimensions $l$, $m$, $n$ are such that matrices $A$ and $W$ satisfy W-RIP for some $\delta_{2q} < 1/2$. 
\end{assumption}
This assumption will be used in the spectral analysis of our preconditioner in Section \ref{sec:cs}.
Another property of matrix $A$ is the near orthogonality of its rows.
Indeed many applications in CS use matrices $A$ that satisfy
\begin{equation}\label{bd8}
\| AA^\intercal - I_m \|_2 \le \delta,
\end{equation}
with a small constant $\delta\ge 0$.
Finally, through the paper we will make use of the following assumption  
\begin{equation}\label{bd9}
\mbox{Ker}(W^*_{\mathcal{N}})\cap \mbox{Ker}(A) = \{0\},
\end{equation}
where $\mathcal{N} := \{j\in\{1,2,\ldots, l\}  \ | \ j\notin \mbox{supp}(W^* x_c)\}$, $\mbox{supp}(x) := \{ i\in\{1,2,\cdots,n\} \ | \ x\in\mathbb{R}^n, x_i \neq 0 \}$. 
Condition \eqref{bd9} implies that
\begin{equation}\label{bd54}
\mbox{Ker}(W^*)\cap \mbox{Ker}(A) = \{0\},
\end{equation}
because $\mbox{Ker}(W^*) \subset \mbox{Ker}(W^*_{\mathcal{N}})$. 
These are commonly used assumptions in the literature, see for example \cite{fadili}. Both are crucial for the analysis of 
the proposed method. 

\subsection{Contribution}
In \cite{ctpdnewton}, Chan, Golub and Mulet, proposed a primal-dual Newton Conjugate Gradients method for image denoising and deblurring problems.
In this paper we modify their method and adapt it for CS problems with coherent and redundant dictionaries. There are three major contributions.

First, we present a complete convergence theory for the proposed pdNCG. In particular, we prove global convergence and local superlinear rate of convergence
for the non strongly convex problems which arise. To the best of our knowledge such an analysis is not available in the current literature for pdNCG. 

Second, we propose an inexpensive preconditioner for fast solution of systems in pdNCG when applied on CS problems with coherent and redundant dictionaries.
We analyze the limiting behaviour of our preconditioner and prove that the eigenvalues of the preconditioned matrices are clustered around one.
This is an essential property that guarantees that only few iterations of CG will be needed to solve approximately the linear systems. Moreover, 
we provide computational evidence that the preconditioner works well not only close to the solution (as predicted by its spectral analysis) but also 
in earlier iterations of pdNCG.

Third, we demonstrate that despite being a second-order method, pdNCG can be more efficient than a specialized first-order method for CS problems of our interest,
even on large-scale instances.
This performance is observed in several numerical experiments presented in this paper. 
We believe that the reason for this is that pdNCG, as a second-order method, captures the curvature of the problems, which results in sufficient decrease in the number of iterations compared 
to a first-order method. This advantage comes with the computational cost of having to solve a linear system at every iteration. 
However, inexact solution of the linear systems using CG combined with the proposed
efficient preconditioner crucially reduces the computational costs per iteration.

\subsection{Format of the Paper and Notation}
The paper is organized as follows. 
In Section \ref{sec:smooth}, problem \eqref{prob1} is replaced by a smooth approximation; the $\ell_1$-norm is approximated by the pseudo-Huber function.
Moreover, some properties of the perturbed objective function and the perturbed optimal solution are shown.
In Section \ref{sec:pdNCG}, pdNCG is presented.
In Section \ref{sec:convanalysis}, global convergence is proved for pdNCG and fast local rate of convergence is established.
In Section \ref{sec:cs}, preconditioning techniques are described for controlling the spectrum of matrices in systems which arise. 
In Section \ref{sec:cont}, a continuation framework for pdNCG is described.
In Section \ref{secNumExp}, numerical experiments are discussed that present the efficiency of pdNCG. Finally, in Section \ref{sec:concl}, conclusions are made. 

Throughout the paper, $\|\cdot\|_1$ is the $\ell_1$-norm, $\|\cdot\|_2$ is the Euclidean norm and $\|\cdot\|_\infty$ the infinity norm. The functions $Re(\cdot)$ and $Im(\cdot)$
take a complex input and return its real and imaginary part, respectively. For simplification of notation, occasionally we will use $Re(\cdot)$ and $Im(\cdot)$ without the parenthesis. 
Furthermore, $diag(\cdot)$ denotes the function which takes as input a vector and outputs a diagonal square matrix with the vector in
the main diagonal. Finally, the super index $c$ denotes the complementarity set, i.e. $\mathcal{J}^c$
is the complementarity set of $\mathcal{J}$.


 


\section{Regularization by Pseudo-Huber}\label{sec:smooth}
In this paper we choose to deal with the non-differentiability of the $\ell_1$-norm by applying smoothing. 
This gives us direct access to second-order information of problem \eqref{prob1}. Moreover, as we shall see
later it allows for the development of a primal-dual method where the steps are taken in both the primal and dual spaces simultaneously.
 These two properties are very important for the robustness of the proposed method. 

In order to perform smoothing, the $\ell_1$-norm is replaced with the pseudo-Huber function \cite{Hartley2004}
\begin{equation}\label{bd6}
 \psi_\mu(W^* x) := \sum_{i=1}^l (({\mu^2+{|W_i^* x|^2}})^{\frac{1}{2}} - \mu),
\end{equation}
where $W_i$ is the $i^{th}$ row of matrix $W\in E^{n\times l}$ and $\mu$ controls the quality of approximation, i.e. for $\mu\to 0$, $\psi_\mu(x)$ tends to the $\ell_1$-norm.
The Pseudo-Huber function is smooth and has derivatives of all degrees. It can be derived by perturbing  
the absolute value function $|x|=\sup\{xz \ | \ -1\le z \le 1 \}$ with the proximity function $d(z)=1-\sqrt{1-z^2}$ in order to get
the smooth function 
$$|x|_\mu = \sup\{xz + \mu\sqrt{1-z^2} -\mu\ | \ -1\le z \le 1 \}=\sqrt{x^2 + \mu^2} -\mu.$$
The original problem \eqref{prob1} is approximated by
\begin{equation}\label{prob2}
\mbox{minimize} \  f_c^\mu(x) :=  c\psi_\mu(W^* x) + \frac{1}{2}\|Ax-b\|^2_2.
\end{equation}

\subsection{Derivatives of Perturbed Function}
The gradient of pseudo-Huber function $\psi_{\mu}(W^* x)$ in \eqref{bd6} is given by
\begin{equation}\label{bd58}
\nabla \psi_{\mu}(W^* x) = (ReW D ReW^\intercal +  ImW D ImW^\intercal)x,
\end{equation}
where $D := diag(D_1, D_2, \cdots, D_l)$ with
\begin{equation}\label{bd61}
D_i := (\mu^2+|y_i|^2)^{-\frac{1}{2}} \quad \forall i=1,2,\cdots,l,
\end{equation}
and $y =[y_1,y_2,\cdots,y_l]^\intercal := W^*x$.
The gradient of function $f_c^\mu(x)$ in \eqref{prob2} is
\begin{equation}\label{bd101}
\nabla f_c^\mu(x) = c\nabla \psi_\mu(W^*x) + A^\intercal(Ax - b).
\end{equation}
The Hessian matrix of $\psi_\mu(x)$ is
\begin{equation}\label{nabla2psi}
\nabla^2\psi_{\mu}(W^*x) :=\frac{1}{4}(W \hat{Y} W^* + \bar W \hat{ {Y}} \bar W^* + W \tilde{Y} \bar W^* + \bar W \tilde{\bar Y} W^*),
\end{equation}
where the bar symbol denotes the complex conjugate, 
$\hat Y :=diag\left[\hat{Y}_1,\hat{Y}_2,..., \hat{Y}_l\right]$, 
$\tilde{Y} :=diag\left[\tilde{Y}_1,\tilde{Y}_2,..., \tilde{Y}_l\right]$ and
\begin{equation}\label{bd62}
\hat{Y}_i:={\mu^2}D_i^{3}+D_i, \quad 
\tilde{Y}_i:=-{y_i^2}D_i^{3},\quad i=1,2,...,l,
\end{equation}
Moreover, the Hessian matrix of $f_c^\mu(x)$ is
\begin{equation}\label{eq131}
\nabla^2 f_c^\mu(x) = c\nabla^2 \psi_\mu(W^*x) + A^\intercal A.
\end{equation}

\subsection{Continuous path}
In the following lemma we show that $x_{c,\mu}$ for $c$ constant is a continuous and differentiable function of $\mu$.
\begin{lemma}\label{lem:7}
Let $c$ be constant and consider $x_{c,\mu}$ as a functional of $\mu>0$. If
$
\mbox{Ker}(W^*)\cap\mbox{Ker}(A)=\{0\}
$,
$x_{c,\mu}$ is continuous and differentiable.
\end{lemma}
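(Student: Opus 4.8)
\emph{Proof sketch.} The plan is to characterize $x_{c,\mu}$ through the first-order optimality condition and then apply the implicit function theorem in the scalar variable $\mu$. Define $F:\mathbb{R}^n\times(0,\infty)\to\mathbb{R}^n$ by
$$
F(x,\mu) := \nabla f_c^\mu(x) = c\,\nabla\psi_\mu(W^*x) + A^\intercal(Ax-b).
$$
For $\mu>0$ each quantity $\mu^2+|W_i^*x|^2$ is bounded away from zero on compact sets, so $\psi_\mu$, and hence $F$, is $C^\infty$ jointly in $(x,\mu)$ on $\mathbb{R}^n\times(0,\infty)$. Since $f_c^\mu$ is convex, $x_{c,\mu}$ is exactly the point at which $F(\cdot,\mu)$ vanishes.

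The crux is to show that $\nabla^2 f_c^\mu(x)=c\,\nabla^2\psi_\mu(W^*x)+A^\intercal A$ is positive definite for every $x$ and every $\mu>0$. Both summands are positive semidefinite, so $d^\intercal\nabla^2 f_c^\mu(x)d=0$ forces $\|Ad\|_2^2=0$ and $d^\intercal\nabla^2\psi_\mu(W^*x)d=0$. Writing $\psi_\mu(W^*x)=\sum_{i=1}^l g_i(x)$ with $g_i(x)=\big(\mu^2+(Re\,W_i^*x)^2+(Im\,W_i^*x)^2\big)^{1/2}-\mu$, each $g_i$ is a smooth convex function of the pair $(Re\,W_i^*x,\,Im\,W_i^*x)\in\mathbb{R}^2$ whose $2\times 2$ Hessian in those variables is positive definite everywhere; hence $d^\intercal\nabla^2 g_i(x)d>0$ unless $W_i^*d=0$. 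Summing over $i$, $d^\intercal\nabla^2\psi_\mu(W^*x)d=0$ implies $W^*d=0$, and combined with $Ad=0$ and the hypothesis \eqref{bd54} we conclude $d=0$. The same computation shows $f_c^\mu$ is strictly convex and coercive: along any direction $d\ne 0$ either $Ad\ne 0$, so $\tfrac12\|Ax-b\|_2^2$ grows quadratically, or $Ad=0$ but $W^*d\ne 0$, so $\psi_\mu(W^*(\cdot))$ grows linearly. Therefore the minimizer $x_{c,\mu}$ exists and is unique for every $\mu>0$.

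Now fix any $\bar\mu>0$. The partial Jacobian $\partial_x F(x_{c,\bar\mu},\bar\mu)=\nabla^2 f_c^{\bar\mu}(x_{c,\bar\mu})$ is positive definite, hence invertible, so the implicit function theorem provides an open neighbourhood of $\bar\mu$ and a $C^\infty$ (in particular continuous and differentiable) map $\mu\mapsto\hat x(\mu)$ with $\hat x(\bar\mu)=x_{c,\bar\mu}$ and $F(\hat x(\mu),\mu)=0$ throughout that neighbourhood. By uniqueness of the minimizer, $\hat x(\mu)=x_{c,\mu}$ there, so $x_{c,\mu}$ is continuous and differentiable at $\bar\mu$; since $\bar\mu>0$ was arbitrary, this holds on all of $(0,\infty)$.

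The main obstacle is the positive-definiteness step, specifically identifying $\mbox{Ker}\big(\nabla^2\psi_\mu(W^*x)\big)=\mbox{Ker}(W^*)$: in the complex-dictionary case the compact formula \eqref{nabla2psi} with the cross terms $W\tilde Y\bar W^*+\bar W\tilde{\bar Y}W^*$ makes this unobvious, and the clean route is the ``sum of strictly convex functions of $(Re,Im)$'' viewpoint used above rather than a direct manipulation of $\hat Y$ and $\tilde Y$. Everything else is routine.
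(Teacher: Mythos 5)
Your proof is correct and rests on the same key fact as the paper's: the optimality condition $\nabla f_c^\mu(x)=0$ characterizes $x_{c,\mu}$, and $\nabla^2 f_c^\mu(x)=c\nabla^2\psi_\mu(W^*x)+A^\intercal A$ is positive definite everywhere under $\mbox{Ker}(W^*)\cap\mbox{Ker}(A)=\{0\}$. The only difference is one of rigor rather than route: the paper differentiates the optimality condition in $\mu$ directly (which tacitly presupposes the differentiability being proved) and simply asserts the positive definiteness, whereas you invoke the implicit function theorem properly, supply the missing proof that $d^\intercal\nabla^2\psi_\mu(W^*x)\,d=0$ forces $W^*d=0$ via the ``sum of strictly convex functions of $(Re\,W_i^*x,Im\,W_i^*x)$'' decomposition, and also justify existence and uniqueness of the minimizer through coercivity and strict convexity. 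Your write-up is therefore a strictly more complete version of the paper's argument.
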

\begin{proof}
The optimality conditions of problem \eqref{prob2} are 
$$
c\nabla \psi_\mu(W^*x) +A^\intercal (Ax-b) = 0.
$$ 
According to definition of $x_{c,\mu}$, we have 
\begin{align*}
c\nabla \psi_\mu(W^*x_{c,\mu}) +A^\intercal (Ax_{c,\mu}-b) & = 0 & \Longrightarrow \\
c \frac{d \nabla \psi_\mu(W^*x_{c,\mu}) }{d \mu} + A^\intercal A \frac{d x_{c,\mu}}{d\mu} & =0 & \Longrightarrow \\
c \Big(\nabla^2 \psi_\mu(W^*x_{c,\mu}) \frac{d x_{c,\mu}}{d\mu} + \frac{d \nabla \psi_\mu(W^*x)}{d\mu} \Big|_{x_{c,\mu}}\Big) + A^\intercal A \frac{dx_{c,\mu}}{d\mu} & =0 & \Longleftrightarrow \\ 
\Big(c\nabla^2 \psi_\mu(W^*x_{c,\mu}) + A^\intercal A\Big)\frac{d x_{c,\mu}}{d\mu} + c\frac{d \nabla \psi_\mu(W^*x)}{d\mu} \Big|_{x_{c,\mu}} & =0 &\Longleftrightarrow \\
\nabla^2 f_c^\mu(W^*x_{c,\mu})\frac{d x_{c,\mu}}{d\mu} + c\frac{d \nabla \psi_\mu(W^*x)}{d\mu} \Big|_{x_{c,\mu}} & =0, & 
\end{align*}
where ${d \nabla \psi_\mu(W^*x)}/{d\mu} |_{x_{c,\mu}}$ is the first-order derivative of $\nabla \psi_\mu(W^*x)$ as a functional of $\mu$, measured at $x_{c,\mu}$. Notice that due to condition 
$
\mbox{Ker}(W^*)\cap\mbox{Ker}(A)=\{0\}
$ 
we have that $\nabla^2 f_c^\mu(x)$ is positive definite $\forall x$, hence $x_{c,\mu}$ is unique. Therefore, the previous system has a unique solution, which means that
$x_{c,\mu}$ is uniquely differentiable as a functional of $\mu$ with $c$ being constant. Therefore, $x_{c,\mu}$ is continuous as a functional of $\mu$. 
\end{proof}

\begin{remark}\label{rem:3}
Lemma \ref{lem:7} and continuity imply that there exists sufficiently small smoothing parameter $\mu$ such that $\|x_{c,\mu}-x_c\|_2< \omega$ for any arbitrarily small $\omega>0$.
\end{remark}

\subsection{Lipschitz Continuity of Hessian of Perturbed Function}\label{sec:prela}
In this subsection Lipschitz continuity of the Hessian of pseudo-Huber function
and the Hessian of the perturbed function $f_c^\mu(x)$ are proved. These results will be used in proving fast local rate of convergence of pdNCG.

\begin{lemma} \label{lem:2}
The Hessian matrix $\nabla^2\psi_{\mu}(W^*x)$ is Lipschitz continuous
\[
\|\nabla^2\psi_{\mu}(W^*y)-\nabla^2\psi_{\mu}(W^*x)\|_2\leq L_\psi\|y-x\|_2,
\]
where $L_\psi>0$.
\end{lemma}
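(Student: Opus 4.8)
The plan is to reduce the matrix Lipschitz estimate to scalar Lipschitz estimates on the diagonal entries $\hat Y_i$ and $\tilde Y_i$ that define $\nabla^2\psi_\mu$ in \eqref{nabla2psi}. Fix two points $x_1,x_2\in\mathbb R^n$ and put $z_j:=W^*x_j$, so that $\|z_1-z_2\|_2\le\|W\|_2\|x_1-x_2\|_2$. In the difference $\nabla^2\psi_\mu(W^*x_1)-\nabla^2\psi_\mu(W^*x_2)$ taken from \eqref{nabla2psi}, the matrices $W$ and $\bar W$ are constant, $\hat Y$ is real, and $\tilde{\bar Y}=\overline{\tilde Y}$; using $\|\bar W\|_2=\|W\|_2$, sub-multiplicativity of the spectral norm, and the fact that conjugation preserves the norm of a diagonal matrix,
\[
\|\nabla^2\psi_\mu(W^*x_1)-\nabla^2\psi_\mu(W^*x_2)\|_2\le\tfrac12\|W\|_2^2\Big(\|\hat Y(z_1)-\hat Y(z_2)\|_2+\|\tilde Y(z_1)-\tilde Y(z_2)\|_2\Big).
\]
Since $\hat Y$ and $\tilde Y$ are diagonal, their operator norms equal the largest modulus of a diagonal entry, so it suffices to show that the scalar maps $\zeta\mapsto\hat Y_i(\zeta)$ and $\zeta\mapsto\tilde Y_i(\zeta)$, with $\zeta\in E$, are globally Lipschitz with constants $L_1,L_2$ depending only on $\mu$. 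Combining this with $\|z_1-z_2\|_\infty\le\|z_1-z_2\|_2\le\|W\|_2\|x_1-x_2\|_2$ yields the claim with $L_\psi:=\tfrac12\|W\|_2^3(L_1+L_2)$.

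For $\hat Y_i$, formulas \eqref{bd61}–\eqref{bd62} give $\hat Y_i(\zeta)=\mu^2(\mu^2+|\zeta|^2)^{-3/2}+(\mu^2+|\zeta|^2)^{-1/2}$, which depends on $\zeta$ only through $|\zeta|$. As $\zeta\mapsto|\zeta|$ is $1$-Lipschitz, it is enough to bound the derivative of $g(t):=\mu^2(\mu^2+t^2)^{-3/2}+(\mu^2+t^2)^{-1/2}$ on $t\ge0$; a direct differentiation gives $g'(t)=-3\mu^2 t(\mu^2+t^2)^{-5/2}-t(\mu^2+t^2)^{-3/2}$, which is continuous on $[0,\infty)$ and vanishes both at $t=0$ and as $t\to\infty$, so $L_1:=\sup_{t\ge0}|g'(t)|<\infty$ (one checks $L_1\le c_1/\mu^2$ after maximising each summand). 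Hence $|\hat Y_i(\zeta_1)-\hat Y_i(\zeta_2)|\le L_1\big|\,|\zeta_1|-|\zeta_2|\,\big|\le L_1|\zeta_1-\zeta_2|$.

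The delicate term is $\tilde Y_i(\zeta)=-\zeta^2(\mu^2+|\zeta|^2)^{-3/2}$, because here $\zeta^2$ is the square of the (possibly complex) number and a naive product rule produces the apparently unbounded factor $2|\zeta|$ from differentiating $\zeta^2$; this is exactly where the decay of $(\mu^2+|\zeta|^2)^{-3/2}$ must be used. I would treat it by the splitting, assuming $|\zeta_1|\ge|\zeta_2|$ (the other case is symmetric),
\[
\tilde Y_i(\zeta_1)-\tilde Y_i(\zeta_2)=-(\zeta_1^2-\zeta_2^2)(\mu^2+|\zeta_1|^2)^{-3/2}-\zeta_2^2\big((\mu^2+|\zeta_1|^2)^{-3/2}-(\mu^2+|\zeta_2|^2)^{-3/2}\big),
\]
bounding the first term by $|\zeta_1-\zeta_2|\,(|\zeta_1|+|\zeta_2|)(\mu^2+|\zeta_1|^2)^{-3/2}\le2|\zeta_1-\zeta_2|\,|\zeta_1|(\mu^2+|\zeta_1|^2)^{-3/2}\le(c_2/\mu^2)|\zeta_1-\zeta_2|$, since $t\mapsto2t(\mu^2+t^2)^{-3/2}$ is bounded on $t\ge0$; and bounding the second term via $|\zeta_2|^2(\mu^2+|\zeta_2|^2)^{-3/2}\le(\mu^2+|\zeta_2|^2)^{-1/2}\le\mu^{-1}$ together with the Lipschitz continuity of $t\mapsto(\mu^2+t^2)^{-3/2}$ (again a bounded derivative) and $\big|\,|\zeta_1|-|\zeta_2|\,\big|\le|\zeta_1-\zeta_2|$. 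Equivalently, one may view $\tilde Y_i$ as a map $\mathbb R^2\to\mathbb R^2$ and verify directly that its Jacobian is bounded on all of $\mathbb R^2$ (it is smooth because $\mu>0$, and both it and its derivative decay at infinity), which gives the same $L_2$ depending only on $\mu$. Either route produces $|\tilde Y_i(\zeta_1)-\tilde Y_i(\zeta_2)|\le L_2|\zeta_1-\zeta_2|$, and the lemma follows with $L_\psi:=\tfrac12\|W\|_2^3(L_1+L_2)>0$. The main obstacle is precisely this last scalar bound on $\tilde Y_i$; the remaining steps are routine linear-algebra bookkeeping and elementary one-variable estimates.
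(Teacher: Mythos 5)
Your proof is correct and follows essentially the same route as the paper's: both reduce the estimate to Lipschitz bounds on the diagonal entries $\hat Y_i$ and $\tilde Y_i$ after factoring out the norms of $W$ and $\bar W$, and both handle the delicate $\tilde Y_i$ term by playing the decay of $(\mu^2+|\zeta|^2)^{-3/2}$ against the growth of $\zeta^2$ (the paper via an explicit derivative along the segment $z(s)=u+s(v-u)$ with hard-coded constants, you via a discrete splitting or a global Jacobian bound, all yielding a constant of order $\mu^{-2}$). The only imprecise spot is your first bound on the second term of the $\tilde Y_i$ splitting --- the factor $|\zeta_2|^2$ must be absorbed at the mean-value point of $t\mapsto(\mu^2+t^2)^{-3/2}$ (using $|\zeta_2|\le\xi$) rather than estimated separately from the Lipschitz constant --- but the Jacobian alternative you offer closes this cleanly.
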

\begin{proof}
Let $u=W^*x$, $v=W^*y$ and $z(s) = u + s(v-u)$, then by using Cauchy-Schwartz we have that
\begin{align*}
\|\nabla^2\psi_{\mu}(W^*y)-\nabla^2\psi_{\mu}(W^*x)\|_2 & =\Big\|\int^{1}_{0}\frac{\nabla^2\psi_{\mu}(z(s))}{ds}ds\Big\|_2 \le \int^{1}_{0}\Big\|\frac{\nabla^2\psi_{\mu}(z(s))}{ds}\Big\|_2ds.
\end{align*}
Furthermore by using the above expession and \ref{nabla2psi} we get
\begin{align}\label{lm6.1}
\left\|\nabla^2\psi_{\mu}(W^*y)-\nabla^2\psi_{\mu}(W^*x)\right\|_2 & \leq 
											   \frac{1}{4}\left\|\left[\begin{array}{cc}W& \bar W \end{array}\right]\right\|_2\left\|\left[\begin{array}{c}W^*\\ \bar W^* \end{array}\right]\right\|_2\int^{1}_{0}\left\|\frac{d}{ds}\hat Z\right\|_2ds \nonumber \\
									           &  + \frac{1}{4}\left\|\left[\begin{array}{cc}W&\bar W \end{array}\right]\right\|_2 \left\|\left[\begin{array}{c} \bar W^*\\W^* \end{array}\right]\right\|_2 \int^{1}_{0}\left\|\frac{d}{ds}\tilde{Z}\right\|_2ds,
\end{align}
where $\hat Z = diag[\hat{Y}_1,...,\hat{Y}_l,\hat{Y}_1,...,\hat{Y}_l]$ and $\tilde{Z}=diag[\tilde{Y}_1,..., \tilde{Y}_l, \tilde{\bar Y}_1,..., \tilde{\bar Y}_l]$, 
which according to \eqref{bd62} and \eqref{bd61} are equal to  
$$\hat{Z}_i=\frac{\mu^2}{(\mu^2+\left|[z(s)]_i\right|^2)^{\frac{3}{2}}}+\frac{1}{(\mu^2+\left|[z(s)]_i\right|^2)^{\frac{1}{2}}}, \quad \forall i=1,2,...,2l$$
and
$$\tilde{Z}_i=-\frac{[z(s)]_i^2}{(\mu^2+\left|[z(s)]_i\right|^2)^\frac{3}{2}}, \quad
\tilde{\bar Z}_i=-\frac{[\bar z(s)]_i^2}{(\mu^2+\left|[z(s)]_i\right|^2)^\frac{3}{2}}, \quad \forall i=1,2,...,l.$$
Furthermore,
\[
\left\|\frac{d}{ds}\hat Z\right\|_2=\left\|vec\left(\frac{d}{ds}\hat Z\right)\right\|_\infty=\displaystyle\max\left|\left[\frac{d}{ds}\hat Z\right]_{ii}\right|=\displaystyle\max\left|\frac{d}{ds}\hat Z_i\right|,
\]
where 
$$
\frac{d}{ds}\hat Z_i=-[Re(u_i-v_i)[\bar{z}(s)]_i]\left[\frac{\mu}{(\mu^2+\left|[z(s)]_i\right|^2)^{\frac{3}{2}}}+\frac{3\mu^3}{(\mu^2+\left|[z(s)]_i\right|^2)^{\frac{5}{2}}}\right]
$$
and $vec\left(\frac{d}{ds}\hat Z\right)$ is the vectorization of a matrix $\frac{d}{ds}\hat Z$.
Thus
\begin{equation}\label{lm6.2}
\left\|\frac{d}{ds}\hat Z\right\|\leq \frac{1}{{\mu}^2}\left\|v-u\right\|\displaystyle\max_{i} M_i<\frac{1}{\mu}\left\|v-u\right\|,
\end{equation}
where 
$$
M_i=\left|[z(s)]_i\right|\left[\frac{\mu^5}{|\mu^2+\left|[z(s)]_i\right|^2|^{\frac{5}{2}}}+3\frac{\mu^7}{|\mu^2+\left|[z(s)]_i\right|^2|^{\frac{7}{2}}}\right]
$$ 
and $\displaystyle\max_{i} M_i\leq\frac{4\mu}{9\sqrt{3}}+\frac{192\mu}{125\sqrt{5}}<\mu$. Moreover,
\[
\left\|\frac{d}{ds}\tilde{Z}\right\|_2=\left\|vec\left(\frac{d}{ds}\tilde{Z}\right)\right\|_\infty=
\displaystyle\max\left|\left[\frac{d}{ds}\tilde{Z}\right]_{ii}\right|=\displaystyle\max\left\{\left|\frac{d}{ds}\tilde{Z}_i\right|,\left|\frac{d}{ds}\tilde{\bar Z}_i\right|\right\},
\]
where 
$$
\frac{d}{ds}\tilde {Z}_i=-\mu\frac{2[z(s)]_i(v_i-u_i)[\mu^2+\left|[z(s)]_i\right|^2]-3[z(s)]_i^2Re([z(s)]_i(v_i-u_i))}{(\mu^2+\left|[u(s)]_i\right|^2)^\frac{5}{2}}
$$
and thus
\begin{equation}\label{eq:170}
\left\|\frac{d}{ds}\tilde{Z}_i\right\|\leq\frac{2}{\mu^2}\left\|v-u\right\|
\displaystyle\max_iN_i<\frac{1}{\mu}\left\|v-u\right\|,
\end{equation}
where 
$$
N_i=\mu^3\frac{|[z(s)]_i|(\mu^2+|[z(s)]_i|^2)+\frac{3}{2}|[z(s)]_i|^3}{(\mu^2+|[z(s)]_i|^2)^\frac{5}{2}}
$$ 
and $\displaystyle\max_iN_i\leq \frac{2\mu}{3\sqrt{3}}+\frac{9\sqrt{3}\mu}{125\sqrt{5}}<\frac{1}{2}\mu$.
By following similar reasoning we get
\begin{equation}\label{eq:171}
\left\|\frac{d}{ds}\tilde{\bar Z}_i\right\|<\frac{1}{\mu}\left\|v-u\right\|.
\end{equation}
Hence by using \eqref{eq:170} and \eqref{eq:171} have
\begin{equation}\label{lm6.3}
\left\|\frac{d}{ds}\tilde{Z}\right\|<\frac{1}{\mu}\left\|v-u\right\|,
\end{equation}
From \eqref{lm6.1}, \eqref{lm6.2} and \eqref{lm6.3}
\[
\left\|\nabla^2\psi_{\mu}(W^*y)-\nabla^2\psi_{\mu}(W^*x)\right\|\leq\frac{1}{\mu^2}L_\psi\left\|y-x\right\|,
\]
where 
\[
L_\psi=\frac{1}{4}\left\|W^*\right\|\left(
\left\|\left[\begin{array}{cc}W&\bar W \end{array}\right]\right\|
\left\|\left[\begin{array}{c}W^*\\\bar W^* \end{array}\right]\right\|
+\left\|\left[\begin{array}{cc}W&\bar W \end{array}\right]\right\|
\left\|\left[\begin{array}{c} \bar W^*\\W^* \end{array}\right]\right\|\right).
\]
\end{proof}

In the following lemma it is shown that the Hessian matrix of $f_c^\mu(x)$ in \eqref{eq131} is Lipschitz continuous.
\begin{lemma} \label{lem:4}The Hessian matrix $\nabla^2 f_c^\mu(x)$ is Lipschitz continuous
\[
\|\nabla^2f_c^{\mu}(y)-\nabla^2f_c^{\mu}(x)\|
\leq L_{f}\|y-x\|,
\]
where $L_f:=cL_\psi$, $L\psi$ is defined in Lemma \ref{lem:2} and $c>0$.
\end{lemma}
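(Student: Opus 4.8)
The plan is to reduce this statement directly to Lemma \ref{lem:2}. Starting from the closed form of the Hessian in \eqref{eq131}, namely $\nabla^2 f_c^\mu(x) = c\nabla^2 \psi_\mu(W^*x) + A^\intercal A$, I would form the difference of the Hessians evaluated at two arbitrary points $x$ and $y$. The term $A^\intercal A$ does not depend on the argument, so it cancels, leaving
\[
\nabla^2 f_c^\mu(y) - \nabla^2 f_c^\mu(x) = c\left(\nabla^2 \psi_\mu(W^*y) - \nabla^2 \psi_\mu(W^*x)\right).
\]

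Next I would take the Euclidean (spectral) norm of both sides and use positive homogeneity of the norm together with $c>0$ to pull the constant out, obtaining $\|\nabla^2 f_c^\mu(y) - \nabla^2 f_c^\mu(x)\| = c\,\|\nabla^2 \psi_\mu(W^*y) - \nabla^2 \psi_\mu(W^*x)\|$. Invoking Lemma \ref{lem:2} bounds the right-hand side by $c L_\psi \|y-x\|$, and setting $L_f := c L_\psi$ finishes the proof.

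There is essentially no obstacle here: the only content is the elementary observation that the quadratic data-fidelity term $\tfrac12\|Ax-b\|_2^2$ contributes a constant Hessian and hence drops out of any difference of Hessians, so Lipschitz continuity of $\nabla^2 f_c^\mu$ is inherited verbatim from that of $\nabla^2 \psi_\mu$ established in Lemma \ref{lem:2}, with the Lipschitz constant merely rescaled by the regularization parameter $c$. The one detail worth stating explicitly is that the estimate is uniform over all $x,y\in\mathbb{R}^n$ and not merely local, which is already ensured by the form in which Lemma \ref{lem:2} is stated.
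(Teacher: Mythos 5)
Your proposal is correct and follows essentially the same route as the paper: the constant Hessian $A^\intercal A$ of the data-fidelity term cancels in the difference, the factor $c$ is pulled out of the norm, and Lemma \ref{lem:2} is invoked to conclude with $L_f = cL_\psi$. No further comment is needed.
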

\begin{proof}
We have that
$
\|\nabla^2f_\tau^{\mu}(y)-\nabla^2f_\tau^{\mu}(x)\| = c\|\nabla^2\psi_{\mu}(W^*y)-\nabla^2\psi_{\mu}(W^*x)\|.
$
By using Lemma \ref{lem:2} and setting $L_f=c L_{\psi}$we obtain the result.
\end{proof}

\section{Primal-Dual Newton Conjugate Gradients Method}\label{sec:pdNCG}
The method which will be discussed in this section is similar to the second-order primal-dual method proposed in \cite{ctpdnewton}. 
In this paper we demonstrate that it can be applied to much more general CS problems. Moreover, we prove global convergence and fast local rate of convergence of pdNCG for CS problems.
Finally, we propose CS preconditioners which provably improve the performance of CG. 

\subsection{Alternative Optimality Conditions}
In \cite{ctnewtonold} the authors solve iTV problems for square and full-rank matrices $A$ which are inexpensively diagonalizable, i.e. image deblurring or denoising.  
More precisely, in the previous cited paper the authors tackled iTV problems using a Newton-CG method for finding roots of $\nabla f_c^\mu(x) = 0$. 
They observed that close to the points of non-smoothness of the $\ell_1$-norm, the smooth pseudo-Huber function \eqref{bd6}
exhibited an ill-conditioning behaviour. This results in two major drawbacks of the application of Newton-CG. First, the linear algebra is difficult to tackle. Second,
the region of convergence of Newton-CG is substantially shrunk. To deal with these problems they have proposed to 
incorporate Newton-CG inside a continuation procedure on the parameters $c$ and $\mu$. Although they showed that continuation did improve the global
convergence properties of Newton-CG it was later discussed in \cite{ctpdnewton} (for the same iTV problems) that continuation is difficult to control (especially for small $\mu$) and Newton-CG is not always convergent in reasonable CPU time. In the latter paper the authors 
have provided numerical evidence that the behaviour of a Newton-CG method is made significantly more robust, even for small values of $\mu$, by simply solving an equivalent optimality conditions instead. 
For problem \eqref{prob2} which is of our interest, by setting $g_{re} = D ReW^\intercal x$, $g_{im} = D ImW^\intercal x$ and using \eqref{bd58} to the optimality conditions of the perturbed problem \eqref{prob2}, the 
equivalent optimality conditions are
\begin{equation}\label{bd53}
\begin{aligned}
c(ReWg_{re}+ImWg_{im}) +A^\intercal (Ax-b) = 0, \\ 
D^{-1}g_{re} = ReW^\intercal x, \quad D^{-1}g_{im} = ImW^\intercal x.
\end{aligned}
\end{equation}

\subsection{The Method}

At every iteration of pdNCG the primal-dual directions are calculated by approximate solving the following linearization of the equality constraints in \eqref{bd53} 
\begin{equation}\label{bd56}
\begin{aligned}
B\Delta x & =  -\nabla f_c^\mu(x) \\ 
\Delta g_{re}   & = D(I-B_1)ReW^\intercal \Delta x + DB_2ImW^\intercal \Delta x- g_{re} + DReW^\intercal x\\
\Delta g_{im}   & = D(I-B_4)ImW^\intercal \Delta x + DB_3ReW^\intercal \Delta x- g_{im} + DImW^\intercal x
\end{aligned}
\end{equation}
where 
\begin{equation}\label{bd55}
B := c\tilde{B} + A^\intercal A,
\end{equation} 
\begin{align*}
\tilde{B}  & := ReWD(I-B_1)ReW^\intercal + ImWD(I-B_4)ImW^\intercal + ReW DB_2 ImW^\intercal  \\ 
              &  \quad \ + ImW B_3D ReW^\intercal,
\end{align*}
and $B_i, i=1,2,3,4$ are diagonal matrices with components
\begin{eqnarray*}
&[B_1]_{ii} := D_{i}[g_{re}]_iReW_i^\intercal x,&  \quad [B_2]_{ii} := D_{i}[g_{re}]_iImW_i^\intercal x, \\
&[B_3]_{ii} := D_{i}[g_{im}]_iReW_i^\intercal x,& \quad  [B_4]_{ii} := D_{i}[g_{im}]_iImW_i^\intercal x.
\end{eqnarray*}
\begin{remark}\label{rem:1}
Matrix $B$ in \eqref{bd55} is positive definite if $\|g_{re} + \sqrt{-1}g_{im}\|_\infty \le 1$ and \eqref{bd54} are satisfied.
The former condition will be maintained through all iterations of pdNCG. The latter condition holds due to the assumed condition \eqref{bd9}.
\end{remark}

It is straightforward to show the claims in Remark \ref{rem:1} for the case of $W$ being a real matrix. For the case of complex $W$ we refer
the reader to a similar claim which is made in \cite{ctpdnewton}, page $1970$.
Although matrix $B$ is positive definite under the conditions stated in Remark \ref{rem:1}, it is not symmetric, except in the case that $W$ is real where all imaginary parts are dropped.
Therefore in the case 
of complex matrix $W$, preconditioned CG (PCG) cannot be employed to solve approximately \eqref{bd56}.
To avoid the problem of non-symmetric matrix $B$ the authors in \cite{ctpdnewton} have suggested to ignore the non-symmetric part in matrix $B$ and employ
CG to solve \eqref{bd56}. This idea is based on the fact that as the method converges, then the symmetric part of $B$ tends to the symmetric second-order derivative of $f_c^\mu(x)$
(we prove this in Lemma \ref{lem:30}). 
In this paper, we will also follow this approach. The system \eqref{bd56} is replaced with 
\begin{equation}\label{eq108}
\begin{aligned}
\hat{B}\Delta x  &  =  -\nabla f_c^\mu(x) \\
\Delta g_{re}   & = D(I-B_1)ReW^\intercal \Delta x + DB_2ImW^\intercal \Delta x- g_{re} + DReW^\intercal x\\
\Delta g_{im}   & = D(I-B_4)ImW^\intercal \Delta x + DB_3ReW^\intercal \Delta x- g_{im} + DImW^\intercal x
\end{aligned}
\end{equation}
where
\begin{equation}\label{eq109}
\hat{B} := c\,\mbox{sym}(\tilde{B}) + A^\intercal A
\end{equation} 
and $\mbox{sym}(\tilde{B}):= 1/2(\tilde{B} + \tilde{B}^\intercal)$ is the symmetric part of $\tilde{B}$. Moreover, PCG is terminated when 
\begin{equation}\label{bd59}
\|\hat{B}\Delta x + \nabla f_c^\mu(x)\|_2 \le \eta \|\nabla f_c^\mu(x)\|_2,
\end{equation}
is satisfied for $\eta \in [0,1)$. 
Then the iterate $g:=g_{re} + \Delta g_{re} + \sqrt{-1}(g_{im} + \Delta g_{im})$ is orthogonally projected on the box $\{x:\left\|x\right\|_\infty\leq 1\}$.
The projection operator for complex arguments is applied component-wise and it is defined as
$
v :=  P_{\|\cdot\|_\infty\le1}(u) = \mbox{min}({1}/{|u|},1)\odot u
$, where $\odot$ denotes the component-wise multiplication.
In the last step, line-search is employed for the primal $\Delta x$ direction in order to guarantee that the objective value $f_c^\mu(x)$ is monotonically decreasing.
The pseudo-code of pdNCG is presented in Figure \ref{fig:2}.

\begin{figure}
\begin{algorithmic}[1]
\vspace{0.1cm}
\STATE \textbf{Input:} $\tau_1\in(0,1)$, $\tau_2\in(0,1/2)$, $x^0$, $g_{re}^0$ and $g_{im}^0$, where $\|g_{re}^0 + \sqrt{-1}g_{im}^0\|_\infty \le 1$.
\STATE \textbf{Loop:} For $k=1,2,...$, until termination criteria are met. \vspace{0.1cm}
\STATE \hspace{0.5cm} Calculate $\Delta x^k$, $\Delta g_{re}^k$ and $\Delta g_{im}^k$ by solving approximately the system \eqref{eq108}, \\ \vspace{0.1cm}
              \hspace{0.5cm} until \eqref{bd59} is satisfied for some $\eta\in[0,1)$. \\ \vspace{0.1cm}
\STATE \hspace{0.5cm} Set 
$
\tilde{g}_{re}^{k+1}  :=  g_{re}^k + \Delta g_{re}^k 
$
,
$
\tilde{g}_{im}^{k+1}   :=  g_{im}^k + \Delta g_{im}^k
$
and calculate
\begin{equation*}
\bar{g}^{k+1}  :=  P_{\|\cdot\|_\infty\le1}(\tilde{g}_{re}^{k+1}  + \sqrt{-1}\tilde{g}_{im}^{k+1} ),
\end{equation*}
\hspace{0.5cm} where $P_{\|\cdot\|_\infty\le1}(\cdot)$ is the orthogonal projection on the $\ell_\infty$ ball. \\ \vspace{0.1cm} 
\hspace{0.5cm} Then set
${g}_{re}^{k+1} := Re \bar{g}^{k+1} $ and $\quad {g}_{im}^{k+1}:=Im\bar{g}^{k+1} $.
\vspace{0.1cm}
\STATE \hspace{0.5cm} Find the least integer $j\ge0$ such that
\begin{equation*}
f_c^\mu(x^k + \tau_1^j \Delta x^k) \le f_c^\mu(x^k) - \tau_2\tau_1^j \|\Delta x^k\|_{\hat{B}^k}
\end{equation*}
 \hspace{0.5cm} and set $\alpha := \tau_1^j$, where $\|\Delta x^k\|_{\hat{B}^k} = (\Delta x^k)^\intercal \hat{B}^k \Delta x^k$ and $\hat{B}^k:=\hat{B}(x^k,g_{re}^k,g_{im}^k)$.
\STATE \hspace{0.5cm}  Set $x^{k+1} := x^k + \alpha \Delta x^k$. 
\end{algorithmic}
\caption{Algorithm primal-dual Newton Conjugate Gradients}
\label{fig:2}
\end{figure}

\section{Convergence Analysis}\label{sec:convanalysis}
In this section we prove global convergence of the proposed pdNCG and we establish fast local rate of convergence. 
Throughout the paper we will denote the optimal solutions of problems \eqref{prob1} and \eqref{prob2} as $x_{c} := \argmin_x f_c(x)$ and $x_{c,\mu} := \argmin_x f_c^\mu(x)$, respectively.
Furthermore, we define $B^k:=B(x^k,g_{re}^k,g_{im}^k)$ and $\hat{B}^k:= \hat{B}(x^k,g_{re}^k,g_{im}^k)$, where $B$ and $\hat{B}$ are defined in \eqref{bd55} and \eqref{eq109}, respectively.
\subsection{Global Convergence}
The following technical result is useful for the subsequent lemma and theorem. 
\begin{lemma}\label{lem:21}
Let condition \ref{bd9} hold. Then for all iterates $k$ of pdNCG matrix $\hat{B}^k$ is positive definite.
\end{lemma}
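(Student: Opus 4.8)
The plan is to show that $\hat B^k = c\,\mathrm{sym}(\tilde B) + A^\intercal A$ is positive definite by verifying that $u^\intercal \hat B^k u > 0$ for every nonzero $u\in\mathbb{R}^n$. Since $u^\intercal \hat B^k u = c\, u^\intercal \mathrm{sym}(\tilde B) u + \|Au\|_2^2 = c\, u^\intercal \tilde B u + \|Au\|_2^2$ (the skew-symmetric part contributes nothing to the quadratic form), it suffices to prove two things: first, that $u^\intercal \tilde B u \ge 0$ for all $u$, so that $\hat B^k$ is at least positive semidefinite; and second, that the two semidefinite pieces cannot vanish simultaneously on a common nonzero vector, which is exactly where condition \eqref{bd9} (more precisely its consequence \eqref{bd54}) enters.

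First I would compute $u^\intercal \tilde B u$ explicitly. Writing $p = \mathrm{Re}W^\intercal u$ and $r = \mathrm{Im}W^\intercal u$ and using the definitions of $B_1,\dots,B_4$ and of $D$, each term of $\tilde B$ contributes a sum over $i=1,\dots,l$, and the $i$-th summand collects into a quadratic form in $(p_i,r_i)$ of the shape $D_i\big((1-[B_1]_{ii})p_i^2 + (1-[B_4]_{ii})r_i^2 + ([B_2]_{ii}+[B_3]_{ii})p_i r_i\big)$ after symmetrization. Substituting $[B_1]_{ii}=D_i[g_{re}]_i p_i$, etc., and recalling $D_i = (\mu^2+|y_i|^2)^{-1/2}$ with $y_i = p_i + \sqrt{-1}\,r_i$, one checks that this $2\times 2$ form is positive semidefinite precisely when $|[g_{re}]_i + \sqrt{-1}[g_{im}]_i|\le 1$, i.e. exactly the condition $\|g_{re}^k + \sqrt{-1} g_{im}^k\|_\infty\le 1$ that pdNCG maintains across all iterations (line 1 of the algorithm, plus the projection step, ensure this). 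This is the calculation referenced in Remark \ref{rem:1}; for real $W$ it is elementary (the form is $D_i(1-[B_1]_{ii})p_i^2$ with $0\le [B_1]_{ii}<1$), and for complex $W$ it follows the argument of \cite{ctpdnewton}. Hence $u^\intercal\tilde B u\ge 0$ for all $u$, and therefore $u^\intercal\hat B^k u = c\,u^\intercal\tilde B u + \|Au\|_2^2\ge 0$.

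Next I would rule out degeneracy. Suppose $u\ne 0$ with $u^\intercal\hat B^k u = 0$. Then both $\|Au\|_2^2 = 0$ and $u^\intercal\tilde B u = 0$, so $u\in\mathrm{Ker}(A)$; and since each $2\times 2$ summand is positive semidefinite and they sum to zero, each must vanish, which forces $D_i p_i^2$-type terms to be zero for every $i$, and because $D_i>0$ strictly (as $\mu>0$), this gives $p_i = r_i = 0$ for all $i$ with $|[g_{re}]_i + \sqrt{-1}[g_{im}]_i| < 1$; more carefully, on the indices where the form is strictly positive definite one gets $p_i=r_i=0$, and on the indices where it is merely semidefinite the quadratic form being zero still pins the relevant component of $W^\intercal u$. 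In either reading one concludes $W^* u = 0$, i.e. $u\in\mathrm{Ker}(W^*)$. Thus $u\in\mathrm{Ker}(W^*)\cap\mathrm{Ker}(A) = \{0\}$ by \eqref{bd54}, contradicting $u\ne 0$. Therefore $u^\intercal\hat B^k u > 0$ for all nonzero $u$, i.e. $\hat B^k$ is positive definite.

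The main obstacle is the semidefiniteness computation in the second paragraph: getting the $i$-th block quadratic form into a clean shape and confirming that the bound $\|g_{re}^k+\sqrt{-1}g_{im}^k\|_\infty\le 1$ is exactly what makes it positive semidefinite (and understanding what happens on the boundary where some blocks are only semidefinite, not definite). I would lean on Remark \ref{rem:1} and the real-case argument, citing \cite{ctpdnewton} for the complex case rather than redoing it; the only genuinely new point relative to \cite{ctpdnewton} is the bookkeeping that shows $\mathrm{Ker}(A)\cap\mathrm{Ker}(W^*)=\{0\}$ suffices to upgrade semidefinite to definite, which is a short null-space argument once the block form is in hand. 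A secondary subtlety is making sure the argument is uniform in $k$ — it is, because the only iterate-dependent input is the dual variable, and the invariant $\|g_{re}^k+\sqrt{-1}g_{im}^k\|_\infty\le 1$ is preserved by construction, while $\mu>0$ is fixed within each continuation subproblem.
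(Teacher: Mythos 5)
Your proof is correct and follows essentially the same route as the paper: the paper simply invokes Remark \ref{rem:1} for the positive definiteness of $B$ (delegating the $2\times 2$ block computation to \cite{ctpdnewton}), observes that $u^\intercal \hat{B}u = u^\intercal B u$ so the symmetric part inherits positive definiteness, and notes that the invariant $\|g_{re}^k+\sqrt{-1}\,g_{im}^k\|_\infty\le 1$ and condition \eqref{bd54} hold at every iterate. Your unpacking of where \eqref{bd54} enters (ruling out a common null vector of $A$ and $W^*$) is exactly the content hidden inside Remark \ref{rem:1}; the only slip is notational ($[B_1]_{ii}$ involves $\mathrm{Re}\,W_i^\intercal x$ at the current iterate, not the component $p_i$ of $W^\intercal u$), which does not affect the argument.
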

\begin{proof}
From Remark \ref{rem:1} we have that matrix ${B}$ in \eqref{bd55} is positive definite if $\|g_{re} + \sqrt{-1}g_{im}\|_\infty \le 1$ and \eqref{bd54} are satisfied.
This implies that the symmetric part of $B$, which is the matrix $\hat{B}$ in \eqref{eq109}, is also positive definite if the same conditions hold.
Condition \eqref{bd54} holds because of the assumed condition \eqref{bd9}.
According to step $3$ of pdNCG, condition  $\|g_{re}^k + \sqrt{-1}g_{im}^k\|_\infty \le 1$ is always satisfied $\forall k$. Hence, matrix $\hat{B}^k$ is positive definite
for all iterations of pdNCG.
\end{proof}

To prove convergence of the primal variables $x$ for pdNCG method we need to show first that at a point $x$ for which $\nabla f_c^\mu(x)\neq 0$ there exists a step-size $\alpha > 0$
such that the line-search termination condition in step $4$ of pdNCG is satisfied. This is shown in the next lemma.
\begin{lemma}\label{lem:9}
Let condition \eqref{bd9} hold. Moreover, let $x^k$ be the current iterate of pdNCG such that $\nabla f_c^\mu(x^k)\neq 0$ and $\Delta x^k$ be the direction calculated in step $2$ of pdNCG.
If PCG is initialized with the zero solution with termination criterion \eqref{bd59},
then the backtracking line-search algorithm will calculate a step-size ${\alpha}$ such that 
$
{\alpha} \ge {\tau_2}/{\kappa(\hat{B}^k)}
$
and the following holds
$$
f_c^\mu(x^k) - f(x({\alpha})) > \frac{\tau_1 \tau_2}{\kappa(\hat{B}^k)} \|\Delta x^k\|_{\hat{B}^k}, 
$$
where $x(\alpha) = x^k + \alpha \Delta x^k$, $\tau_1$ and $\tau_2$ are used in step $4$ of algorithm pdNCG.
\end{lemma}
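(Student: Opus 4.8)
The plan is to run the classical backtracking line-search analysis, adapted to the fact that the sufficient-decrease test in step $4$ of pdNCG is phrased in the $\hat{B}^k$-(semi)norm rather than through the gradient. Two ingredients are specific to pdNCG: an exact formula for the directional derivative of $f_c^\mu$ along $\Delta x^k$, which holds because PCG is initialized at the zero vector, and a quadratic overestimate of $f_c^\mu$ along the ray $x^k+\alpha\Delta x^k$.

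First I would show that $\Delta x^k$ is a strict descent direction with $\nabla f_c^\mu(x^k)^\intercal\Delta x^k=-\|\Delta x^k\|_{\hat{B}^k}$. By Lemma~\ref{lem:21}, $\hat{B}^k$ is positive definite, so together with a symmetric positive definite preconditioner PCG is well defined on $\hat{B}^k\Delta x=-\nabla f_c^\mu(x^k)$; started from the zero guess, every inner iterate $\Delta x^k$ lies in the (preconditioned) Krylov subspace, while the residual $r:=-\nabla f_c^\mu(x^k)-\hat{B}^k\Delta x^k$ is orthogonal to that subspace in the relevant inner product. Hence $r^\intercal\Delta x^k=0$, i.e. $-\nabla f_c^\mu(x^k)^\intercal\Delta x^k=(\Delta x^k)^\intercal\hat{B}^k\Delta x^k=\|\Delta x^k\|_{\hat{B}^k}$. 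Moreover $\Delta x^k\neq 0$: the termination test \eqref{bd59} with $\eta<1$ gives $\|\hat{B}^k\Delta x^k\|_2\ge(1-\eta)\|\nabla f_c^\mu(x^k)\|_2>0$ since $\nabla f_c^\mu(x^k)\neq 0$. Thus $\|\Delta x^k\|_{\hat{B}^k}>0$ and the directional derivative is strictly negative.

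Next I would bound $f_c^\mu$ along the ray. The entries of $D$, $\hat Y$ and $\tilde Y$ in \eqref{bd61}--\eqref{bd62} are uniformly bounded (for instance $D_i\le\mu^{-1}$ and $\hat Y_i\le 2\mu^{-1}$), so $\nabla^2 f_c^\mu$ is bounded in norm by some constant $M$; equivalently one may combine Lemma~\ref{lem:4} with coercivity of $f_c^\mu$ (which holds under \eqref{bd54}) to bound the Hessian on the level set containing the iterates. A second-order Taylor estimate then gives
\[
f_c^\mu(x(\alpha))\le f_c^\mu(x^k)-\alpha\|\Delta x^k\|_{\hat{B}^k}+\tfrac12 M\alpha^2\|\Delta x^k\|_2^2,
\]
and since $\|\Delta x^k\|_2^2\le\|\Delta x^k\|_{\hat{B}^k}/\lambda_{\min}(\hat{B}^k)$ this becomes $f_c^\mu(x(\alpha))\le f_c^\mu(x^k)-\alpha\|\Delta x^k\|_{\hat{B}^k}\big(1-M\alpha/(2\lambda_{\min}(\hat{B}^k))\big)$. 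Hence the test in step $4$ is met whenever $1-M\alpha/(2\lambda_{\min}(\hat{B}^k))\ge\tau_2$, i.e. whenever $\alpha\le 2(1-\tau_2)\lambda_{\min}(\hat{B}^k)/M$, and, as $\tau_2<1/2$, in particular for every $\alpha\le\lambda_{\min}(\hat{B}^k)/M$.

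The standard backtracking bookkeeping then closes the argument: the line search returns $\alpha=\tau_1^{j}$ for the least $j\ge 0$ passing the test, so if $j\ge 1$ the previous trial $\tau_1^{j-1}$ fails and therefore exceeds the threshold $2(1-\tau_2)\lambda_{\min}(\hat{B}^k)/M$, giving $\alpha=\tau_1\tau_1^{j-1}>2\tau_1(1-\tau_2)\lambda_{\min}(\hat{B}^k)/M$ (and $\alpha=1$ if $j=0$, which is no smaller); the claimed decrease is then immediate from the test at this $\alpha$, namely $f_c^\mu(x^k)-f_c^\mu(x(\alpha))\ge\tau_2\alpha\|\Delta x^k\|_{\hat{B}^k}>\big(\tau_1\tau_2/\kappa(\hat{B}^k)\big)\|\Delta x^k\|_{\hat{B}^k}$ once one has $\alpha\ge\tau_1/\kappa(\hat{B}^k)$. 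The place I expect to be the real work is precisely turning the curvature constant $M$ into $\lambda_{\max}(\hat{B}^k)$ so that the condition number of $\hat{B}^k$ itself (and not a looser ratio) appears: this requires controlling $\nabla^2 f_c^\mu$ along the whole segment $[x^k,x^k+\alpha\Delta x^k]$ in terms of $\hat{B}^k$, via a spectral comparison of $\nabla^2 f_c^\mu$ with the primal--dual matrix $\hat{B}$ available from the pseudo-Huber structure (a bound of the type $\nabla^2 f_c^\mu(x)\preceq C\,\hat{B}(x,g,\cdot)$ with $C=O(1)$ whenever $\|g\|_\infty\le1$, in the spirit of \cite{ctpdnewton}); everything else is the textbook backtracking analysis transcribed to the $\hat{B}^k$-norm sufficient-decrease condition, together with the CG-from-zero identity for the directional derivative.
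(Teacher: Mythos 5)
Your proposal takes essentially the same route as the paper's proof: the CG-initialized-from-zero identity $\nabla f_c^\mu(x^k)^\intercal\Delta x^k=-(\Delta x^k)^\intercal\hat{B}^k\Delta x^k$, a quadratic overestimate of $f_c^\mu$ along the ray expressed in the $\hat{B}^k$-norm, and the standard backtracking bookkeeping, with positive definiteness of $\hat{B}^k$ supplied by Lemma \ref{lem:21}. The one step you flag as ``the real work''---upgrading the generic curvature constant $M$ to $\lambda_{max}(\hat{B}^k)$ so that $\kappa(\hat{B}^k)$, rather than $M/\lambda_{min}(\hat{B}^k)$, appears---is precisely the step the paper also does not carry out explicitly, delegating the inequality $f_c^\mu(x(\alpha))\le f_c^\mu(x^k)-\alpha\|\Delta x^k\|_{\hat{B}^k}^2+\kappa(\hat{B}^k)\frac{\alpha^2}{2}\|\Delta x^k\|_{\hat{B}^k}^2$ to Lemma $9$ of \cite{l1regSCfg}.
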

\begin{proof}
The proof is very similar to the analysis of Lemma $9$ in \cite{l1regSCfg}, however, it is 
worth going through the most important steps of it again since some details vary. 
From Lemma \ref{lem:21} we have that matrix $\hat{B}^k$ is positive definite $\forall k$.  
According to Lemma $7$ in \cite{l1regSCfg}, if $\nabla f_c^\mu(x^k)\neq 0$ and PCG is initialized with the zero solution, then PCG
at the $i^{th}$ iteration returns the direction $\Delta x^k$ which satisfies
$$
(\Delta x^k)^\intercal \hat{B}^k \Delta x^k = - (\Delta x^k)^\intercal \nabla f_c^\mu(x^k).
$$
Therefore, it holds that 
$$
f_c^\mu(x(\alpha)) \le  f_c^\mu(x^k) - \alpha \|\Delta x^k\|_{\hat{B}^k}^2 + \kappa(\hat{B}^k)\frac{\alpha^2}{2}\|\Delta x^k\|_{\hat{B}^k}^2.
$$
The minimizer of the right hand side of the previous inequality is $\tilde{\alpha}=1/\kappa(\hat{B})$ and it satisfies 
$$
f_c^\mu(x(\tilde{\alpha})) \le f_c^\mu(x^k)  - \frac{\kappa(\hat{B}^k)}{2}\|\Delta x^k\|_{\hat{B}^k}^2,
$$
where $x(\tilde{\alpha})= x^k  + \tilde{\alpha}\Delta x^k$. The step-size $\tilde{\alpha}$ satisfies the termination condition of the line-search method in step $4$ of pdNCG in Figure \ref{fig:2}.
Therefore, in the worst case, the line-search method will return a step-size which cannot be smaller than $\tau_2/\kappa(\hat{B}^k)$. For this minimum
step-size we obtain the minimum decrease in the preamble of this lemma. 
\end{proof}

Based on the previous lemma, in the next theorem we prove convergence of pdNCG for the primal variables.
\begin{theorem}\label{thm:5}
Let condition \eqref{bd9} hold. Moreover, let $\{x^k\}$ be a sequence generated by pdNCG. PCG
is used as described in the preamble of Lemma \ref{lem:9}. Then, the sequence  $\{x^k\}$ converges to $x_{c,\mu}$.
\end{theorem}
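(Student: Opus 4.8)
The plan is the standard descent-method argument: show the objective values decrease to a finite limit, deduce that the steps $\Delta x^k$ vanish, then that the gradients vanish, and finally use strict convexity to identify the unique limit point. I would proceed as follows.

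First, observe that $f_c^\mu$ is bounded below: both $c\psi_\mu(W^*x)$ and $\tfrac12\|Ax-b\|_2^2$ are nonnegative, so $f_c^\mu(x)\ge 0$ for all $x$. By Lemma~\ref{lem:9} every accepted step satisfies $f_c^\mu(x^{k+1})<f_c^\mu(x^k)$, so $\{f_c^\mu(x^k)\}$ is monotonically decreasing and bounded below, hence convergent; in particular $\sum_{k}\big(f_c^\mu(x^k)-f_c^\mu(x^{k+1})\big)=f_c^\mu(x^0)-\lim_k f_c^\mu(x^k)<\infty$, and all iterates lie in the level set $\mathcal L:=\{x:f_c^\mu(x)\le f_c^\mu(x^0)\}$. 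Condition \eqref{bd54} makes $f_c^\mu$ coercive: if $\|x\|_2\to\infty$ then, since $\mbox{Ker}(W^*)\cap\mbox{Ker}(A)=\{0\}$, at least one of $\|W^*x\|_2,\|Ax\|_2$ grows unboundedly, which forces $f_c^\mu(x)\to\infty$ (using $\psi_\mu(W^*x)\ge\|W^*x\|_1-l\mu$ for the first term and $\tfrac12\|Ax-b\|_2^2\ge\tfrac12(\|Ax\|_2-\|b\|_2)^2$ for the second). Hence $\mathcal L$ is bounded. The same condition gives $\nabla^2 f_c^\mu(x)\succ0$ for all $x$ — this is exactly the computation in the proof of Lemma~\ref{lem:7} — so $f_c^\mu$ is strictly convex and $x_{c,\mu}$ is its unique stationary point.

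Second, I would establish a uniform spectral bound $\lambda_{\max}(\hat B^k)\le\Lambda$ for some constant $\Lambda$ independent of $k$. On the bounded set $\mathcal L$ the entries of $y^k=W^*x^k$ are bounded, so the diagonal entries $D_i^k=(\mu^2+|y_i^k|^2)^{-1/2}$ stay in a fixed interval; together with $\|g_{re}^k+\sqrt{-1}\,g_{im}^k\|_\infty\le1$, which holds for every $k$ by step~3 of pdNCG, this bounds the entries of $B_1^k,\dots,B_4^k$ and therefore $\|\hat B^k\|_2$. By Lemma~\ref{lem:21} $\hat B^k\succ0$, so $\kappa(\hat B^k)<\infty$ and, crucially, $\lambda_{\min}(\hat B^k)/\kappa(\hat B^k)=1/\lambda_{\max}(\hat B^k)\ge1/\Lambda$, which sidesteps the need for a uniform \emph{lower} bound on $\lambda_{\min}(\hat B^k)$. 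Now combining Lemma~\ref{lem:9} with $\|\Delta x^k\|_{\hat B^k}=(\Delta x^k)^\intercal\hat B^k\Delta x^k\ge\lambda_{\min}(\hat B^k)\|\Delta x^k\|_2^2$,
\[
f_c^\mu(x^k)-f_c^\mu(x^{k+1})>\frac{\tau_1\tau_2}{\kappa(\hat B^k)}\|\Delta x^k\|_{\hat B^k}\ge\frac{\tau_1\tau_2}{\lambda_{\max}(\hat B^k)}\|\Delta x^k\|_2^2\ge\frac{\tau_1\tau_2}{\Lambda}\|\Delta x^k\|_2^2 .
\]
Summing over $k$ and using that the left-hand side telescopes to a finite quantity yields $\sum_k\|\Delta x^k\|_2^2<\infty$, hence $\|\Delta x^k\|_2\to0$. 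From the PCG stopping rule \eqref{bd59} and the triangle inequality, $(1-\eta)\|\nabla f_c^\mu(x^k)\|_2\le\|\hat B^k\Delta x^k\|_2\le\Lambda\|\Delta x^k\|_2$, so $\|\nabla f_c^\mu(x^k)\|_2\to0$ (this needs $\eta$ to stay bounded away from $1$, which I would require). Finally, $\{x^k\}\subset\mathcal L$ is bounded, hence has limit points; by continuity of $\nabla f_c^\mu$ any limit point $x^*$ satisfies $\nabla f_c^\mu(x^*)=0$, and strict convexity forces $x^*=x_{c,\mu}$. A bounded sequence all of whose limit points equal $x_{c,\mu}$ converges to $x_{c,\mu}$, which is the claim.

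The main obstacle is the uniform bound $\Lambda$ on $\lambda_{\max}(\hat B^k)$: it rests on boundedness of the level set $\mathcal L$, and this is precisely where assumption \eqref{bd9} (equivalently \eqref{bd54}) does the essential work, via coercivity of $f_c^\mu$, together with the fact that the projection in step~3 keeps the dual iterates in the unit $\ell_\infty$-ball. Everything else is routine Zoutendijk-type bookkeeping, and the uniqueness of the limit follows from the strict convexity already recorded in the proof of Lemma~\ref{lem:7}.
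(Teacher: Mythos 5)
Your overall route is the same as the paper's: monotone descent via Lemma \ref{lem:9}, boundedness of the sublevel set, $\Delta x^k\to 0$, then $\nabla f_c^\mu(x^k)\to 0$, and identification of the limit with the unique stationary point $x_{c,\mu}$. You actually supply several steps the paper leaves implicit (coercivity of $f_c^\mu$ under \eqref{bd54}, the passage from the stopping rule \eqref{bd59} to $\|\nabla f_c^\mu(x^k)\|_2\to0$, and the ``all limit points coincide'' argument), which is welcome.

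There is, however, a concrete error at exactly the step you flag as the crux. You assert $\lambda_{\min}(\hat B^k)/\kappa(\hat B^k)=1/\lambda_{\max}(\hat B^k)$, but since $\kappa(\hat B^k)=\lambda_{\max}(\hat B^k)/\lambda_{\min}(\hat B^k)$ the left-hand side equals $\lambda_{\min}(\hat B^k)^2/\lambda_{\max}(\hat B^k)$, not $1/\lambda_{\max}(\hat B^k)$. Consequently the displayed inequality
\[
\frac{\tau_1\tau_2}{\kappa(\hat B^k)}\|\Delta x^k\|_{\hat B^k}\ \ge\ \frac{\tau_1\tau_2}{\lambda_{\max}(\hat B^k)}\|\Delta x^k\|_2^2
\]
is false unless $\lambda_{\min}(\hat B^k)\ge 1$, and your claimed sidestep of a uniform lower bound on $\lambda_{\min}(\hat B^k)$ does not work: what the chain actually yields is $f_c^\mu(x^k)-f_c^\mu(x^{k+1})\ge \tau_1\tau_2\,\lambda_{\min}(\hat B^k)^2\,\lambda_{\max}(\hat B^k)^{-1}\|\Delta x^k\|_2^2$, which is useless if $\lambda_{\min}(\hat B^k)$ could drift to $0$. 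The gap closes with tools you already invoked: $(x,g_{re},g_{im})\mapsto\hat B(x,g_{re},g_{im})$ is continuous, the iterates live in the compact set $\mathcal L\times\{\|g_{re}+\sqrt{-1}\,g_{im}\|_\infty\le1\}$, and by Remark \ref{rem:1} and Lemma \ref{lem:21} $\hat B$ is positive definite everywhere on that set, so $\lambda_{\min}(\hat B^k)\ge\underline{\lambda}>0$ uniformly; then $\lambda_{\min}(\hat B^k)^2/\lambda_{\max}(\hat B^k)\ge\underline{\lambda}^2/\Lambda$ and the summation argument, together with the rest of your proof, goes through. (Your added hypothesis that $\eta$ stays bounded away from $1$ is indeed needed for the step $(1-\eta)\|\nabla f_c^\mu(x^k)\|_2\le\Lambda\|\Delta x^k\|_2$; the paper argues that step only qualitatively.)
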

\begin{proof}
From Lemma \ref{lem:9} we have that if $x^k$ is not the optimal solution of problem \eqref{prob2}, i.e. $\nabla f_c^\mu(x^k)\neq 0$, then
the objective function $f_c^\mu(x)$ is monotonically decreased when a step is made from $x^k$ to $x^{k+1}$. This implies that if $k\to \infty$ then $f_c^\mu(x^k) - f_c^\mu(x^{k+1}) \to 0$. 
Since $f_c^\mu(x^0) < \infty$ and $f_c^\mu(x)$ is monotonically decreased, where $x^0$ is a finite first guess given as an input to pdNCG, 
then the sequence $\{x^k\}$ belongs in a closed, bounded and therefore, compact sublevel set. Hence, the sequence $\{x^k\}$  must have a subsequence which converges to a point $x^*$
and this implies that $\{x^k\}$ also converges to $x^*$.
From Lemma \ref{lem:21} we have that matrix $\hat{B}^k$ is positive definite $\forall k$.  Since $\hat{B}^k$ is positive definite, 
from Lemma \ref{lem:9} we have that $\Delta x^k \to 0$. 
Moreover, PCG returns the zero direction $\Delta x^k$
if and only if $\nabla f_c^\mu(x^k)=0$. Therefore, for $k\to \infty$ we have that $\nabla f_c^\mu(x^k)\to\nabla f_c^\mu(x^*)=0$, hence, $x^k \to x_{c,\mu}$, which is the optimal solution
of problem \eqref{prob2}. 
\end{proof}

In the following theorem, convergence of the dual variables is established. This theorem in combination with Theorem \ref{thm:5} imply that 
the primal-dual iterates produced by pdNCG converge to a solution of the system \eqref{bd53}.
\begin{theorem}\label{thm:6}
Let the assumptions of Theorem \ref{thm:5} hold. Then we have that the sequences of dual variables produced by pdNCG satisfy 
$\{g_{re}^k\} \to DReW^\intercal x_{c,\mu}$, $\{g_{im}^k\} \to DImW^\intercal x_{c,\mu}$, where $D$ in \eqref{bd61} is measured at $x_{c,\mu}$. 
Furthermore, the previous imply that the primal-dual iterates of
pdNCG converge to the solution of system \eqref{bd53}.
\end{theorem}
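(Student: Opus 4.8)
The plan is to propagate the two limits supplied by Theorem \ref{thm:5}, namely $x^k \to x_{c,\mu}$ and $\Delta x^k \to 0$, through the dual update rules of pdNCG. First I would substitute the second and third equations of \eqref{eq108} into the definitions $\tilde g_{re}^{k+1} = g_{re}^k + \Delta g_{re}^k$ and $\tilde g_{im}^{k+1} = g_{im}^k + \Delta g_{im}^k$. The terms $-g_{re}^k$ and $-g_{im}^k$ cancel, leaving
$$
\tilde g_{re}^{k+1} = D^k(I-B_1^k)ReW^\intercal \Delta x^k + D^k B_2^k ImW^\intercal \Delta x^k + D^k ReW^\intercal x^k,
$$
and the analogous identity for $\tilde g_{im}^{k+1}$ with $B_4^k,B_3^k$ and $ImW^\intercal x^k$, where $D^k, B_i^k$ are evaluated at $(x^k,g_{re}^k,g_{im}^k)$.

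Next I would establish the required boundedness. Since $\mu>0$ is fixed, $\|D^k\|_2 = \max_i (\mu^2+|(W^\ast x^k)_i|^2)^{-1/2} \le \mu^{-1}$; since step $3$ of pdNCG enforces $\|g_{re}^k + \sqrt{-1}g_{im}^k\|_\infty \le 1$ for every $k$, the entries of $g_{re}^k,g_{im}^k$ are bounded by $1$; and since $\{x^k\}$ converges it is bounded. Hence each of $B_1^k,\dots,B_4^k$ has uniformly bounded entries, and the products $\|D^k(I-B_1^k)ReW^\intercal\|_2$, $\|D^k B_2^k ImW^\intercal\|_2$, etc., are bounded uniformly in $k$. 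Combined with $\Delta x^k\to 0$, the first two terms in the displayed expression vanish; by continuity of $D_i = (\mu^2+|(W^\ast x)_i|^2)^{-1/2}$ in $x$ together with $x^k\to x_{c,\mu}$, the last term converges to $D ReW^\intercal x_{c,\mu}$, with $D$ evaluated at $x_{c,\mu}$. Thus $\tilde g_{re}^{k+1} \to D ReW^\intercal x_{c,\mu}$ and $\tilde g_{im}^{k+1} \to D ImW^\intercal x_{c,\mu}$.

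The only genuinely delicate point is the projection step $\bar g^{k+1} = P_{\|\cdot\|_\infty\le1}(\tilde g_{re}^{k+1} + \sqrt{-1}\tilde g_{im}^{k+1})$. I would show that the limit point $v := D ReW^\intercal x_{c,\mu} + \sqrt{-1}\,D ImW^\intercal x_{c,\mu}$ lies strictly inside the unit $\ell_\infty$ ball. Writing $y = W^\ast x_{c,\mu}$ and using that $x_{c,\mu}$ is real, one has $(ReW^\intercal x_{c,\mu})_i^2 + (ImW^\intercal x_{c,\mu})_i^2 = |y_i|^2$, so $|v_i| = D_i|y_i| = |y_i|/\sqrt{\mu^2+|y_i|^2} < 1$ for every $i$. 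Consequently $v$ belongs to the open set $\{u : \|u\|_\infty < 1\}$ on which $P_{\|\cdot\|_\infty\le1}$ acts as the identity; since $P_{\|\cdot\|_\infty\le1}$ is nonexpansive (hence continuous), $\bar g^{k+1} \to P_{\|\cdot\|_\infty\le1}(v) = v$, and therefore $g_{re}^{k+1} = Re\,\bar g^{k+1} \to Re\,v = D ReW^\intercal x_{c,\mu}$ and $g_{im}^{k+1} = Im\,\bar g^{k+1} \to Im\,v = D ImW^\intercal x_{c,\mu}$, which is the claimed limit.

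Finally I would verify that the limiting triple $(x_{c,\mu}, D ReW^\intercal x_{c,\mu}, D ImW^\intercal x_{c,\mu})$ solves \eqref{bd53}. The second and third equations $D^{-1}g_{re} = ReW^\intercal x_{c,\mu}$ and $D^{-1}g_{im} = ImW^\intercal x_{c,\mu}$ hold by the very form of the limits just derived. For the first equation, substituting $g_{re} = D ReW^\intercal x_{c,\mu}$ and $g_{im} = D ImW^\intercal x_{c,\mu}$ into $c(ReW g_{re} + ImW g_{im}) + A^\intercal(A x_{c,\mu} - b)$ gives, by \eqref{bd58}, exactly $c\nabla\psi_\mu(W^\ast x_{c,\mu}) + A^\intercal(A x_{c,\mu} - b) = \nabla f_c^\mu(x_{c,\mu}) = 0$, the last equality because $x_{c,\mu}$ minimizes $f_c^\mu$ by Theorem \ref{thm:5}. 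I expect the uniform-boundedness bookkeeping in the second step and the strict-interiority argument for the projection in the third step to be the only parts needing care; the rest is direct substitution and continuity.
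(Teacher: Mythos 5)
Your proposal is correct and follows essentially the same route as the paper's proof: propagate $x^k\to x_{c,\mu}$ and $\Delta x^k\to 0$ through the dual updates in \eqref{eq108}, observe that the projection fixes the limit point, and verify the limit satisfies \eqref{bd53}. The only difference is that you supply details the paper leaves implicit (the uniform boundedness of $D^k$ and $B_i^k$, and the strict bound $|y_i|/\sqrt{\mu^2+|y_i|^2}<1$ showing the projection acts as the identity at the limit), which is a welcome tightening rather than a deviation.
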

\begin{proof}
From Theorem \ref{thm:5} we have that $x^k \to x_{c,\mu}$  and $\Delta x^k \to 0$. Hence, from \eqref{eq108} we get that $\Delta g_{re}^k \to -g_{re}^k + DReW^\intercal x_{c,\mu}$
and $\Delta g_{im}^k \to -g_{im}^k + DImW^\intercal x_{c,\mu}$, where $D$ is defined in \eqref{bd61} and in this case is measured at $x_{c,\mu}$. Moreover, we have that 
the iterates at step $3$ of pdNCG satisfy $\tilde{g}_{re}^k \to  DReW^\intercal x_{c,\mu}$, $\tilde{g}_{im}^k  \to  DImW^\intercal x_{c,\mu}$. Consequently, from step $3$ we have 
\begin{align*}
\bar{g}^k  & =  P_{\|\cdot\|_\infty\le1}(\tilde{g}_{re}^k + \sqrt{-1}\tilde{g}_{im}^k) \\ 
                                           & \to P_{\|\cdot\|_\infty\le1}(DReW^\intercal x_{c,\mu} + \sqrt{-1}DImW^\intercal x_{c,\mu}) \\ 
                                           & = DReW^\intercal x_{c,\mu} + \sqrt{-1}DImW^\intercal x_{c,\mu}.
\end{align*}
The previous means that $g_{re}^k\to DReW^\intercal x_{c,\mu}$ and $g_{im}^k\to DImW^\intercal x_{c,\mu}$. 
It is easy to check that at the limit $k\to\infty$, the values of $g_{re}^k$ and $g_{im}^k$ with the optimal variable $x_{c,\mu}$ satisfy system \eqref{bd53}.
\end{proof}

Based on Theorem \ref{thm:6} we prove in the following lemma that $\hat{B}$ in \eqref{eq109} converges to the second-order derivative of $f_c^\mu(x)$.
We will use this lemma in order to prove local superlinear rate of convergence in the next subsection.
Let us first present an alternative definition to \eqref{nabla2psi} of the Hessian matrix of pseudo-Huber function.
\begin{eqnarray}\label{eq121}
\nabla^2\psi_{\mu}(W^*x)  & =& ReWD(I-\tilde{B}_1)ReW^\intercal + ImWD(I-\tilde{B}_4)ImW^\intercal  \\ 
              & &+ ReW D\tilde{B}_2 ImW^\intercal + ImW \tilde{B}_3D ReW^\intercal \nonumber,
\end{eqnarray}
and $\tilde{B}_i, i=1,2,3,4$ are diagonal matrices with components
\begin{align*}
[\tilde{B}_1]_{ii} &:= D_{i}^2(ReW_i^\intercal x)^2, \quad  [\tilde{B}_2]_{ii} := D_{i}^2 (ReW_i^\intercal x) ImW_i^\intercal x, \\
[\tilde{B}_3]_{ii} &:= D_{i}^2(ImW_i^\intercal x)ReW_i^\intercal x, \quad [\tilde{B}_4]_{ii} := D_{i}^2(ImW_i^\intercal x)^2.
\end{align*}
This definition can be obtained by derivation of \eqref{bd58}. In the next lemma notice from \eqref{eq108} and step $3$ of pdNCG, 
that the dual iterates $g_{re}^k$ and $g_{im}^k$ depend on $x^k$. 
\begin{lemma}\label{lem:30}
Let the assumptions of Theorem \ref{thm:5} hold. Let the primal-dual sequences $\{x^k\}$, $\{g_{re}^k\}$ and $\{g_{im}^k\}$ be produced by pdNCG.
If PCG is initialized with the zero solution with termination criterion \eqref{bd59}, then $\hat{B}^k \to \nabla^2 f_c^\mu(x_{c,\mu})$
for $k\to \infty$.
\end{lemma}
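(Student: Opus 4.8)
The plan is to propagate the limits established in Theorems~\ref{thm:5} and~\ref{thm:6} through the (continuous, in fact smooth for $\mu>0$) dependence of every building block of $\hat B^k$ on the triple $(x,g_{re},g_{im})$, and then to remove the symmetrization operator $\mbox{sym}(\cdot)$ by observing that the limiting matrix is the Hessian of a real-valued function and hence already symmetric. No new estimate is needed: the statement is a continuity argument sitting on top of the two convergence theorems, whose hypotheses (PCG initialized at zero, termination by \eqref{bd59}) are exactly those imposed here.

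Concretely, I would first record that, by Theorem~\ref{thm:5}, $x^k\to x_{c,\mu}$, so $W^*x^k\to W^*x_{c,\mu}$ and, since each $D_i=(\mu^2+|[W^*x]_i|^2)^{-1/2}$ is continuous in $x$, the diagonal matrix $D^k$ converges to $D$ evaluated at $x_{c,\mu}$. By Theorem~\ref{thm:6}, $g_{re}^k\to DReW^\intercal x_{c,\mu}$ and $g_{im}^k\to DImW^\intercal x_{c,\mu}$, with $D$ measured at $x_{c,\mu}$. Substituting these limits into the entries of the diagonal matrices $B_1^k,\dots,B_4^k$ gives, for instance, $[B_1^k]_{ii}=D_i^k[g_{re}^k]_i ReW_i^\intercal x^k\to D_i\,(D_i ReW_i^\intercal x_{c,\mu})\,ReW_i^\intercal x_{c,\mu}=D_i^2(ReW_i^\intercal x_{c,\mu})^2=[\tilde B_1]_{ii}$, and analogously $B_2^k\to\tilde B_2$, $B_3^k\to\tilde B_3$, $B_4^k\to\tilde B_4$, the $\tilde B_i$ being exactly the diagonal matrices of \eqref{eq121} evaluated at $x_{c,\mu}$. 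Since $W$ is fixed and matrix products and transposes are continuous, the definition of $\tilde B$ following \eqref{bd55} gives $\tilde B^k\to ReWD(I-\tilde B_1)ReW^\intercal + ImWD(I-\tilde B_4)ImW^\intercal + ReW D\tilde B_2 ImW^\intercal + ImW\tilde B_3 D ReW^\intercal$, which by \eqref{eq121} is $\nabla^2\psi_\mu(W^*x_{c,\mu})$.

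Finally, since $\psi_\mu(W^*\cdot)$ is a real-valued smooth function of the real variable $x$, the limit $\nabla^2\psi_\mu(W^*x_{c,\mu})$ is a symmetric matrix, so $\mbox{sym}(\tilde B^k)\to\mbox{sym}\big(\nabla^2\psi_\mu(W^*x_{c,\mu})\big)=\nabla^2\psi_\mu(W^*x_{c,\mu})$. Adding the constant $A^\intercal A$ and invoking \eqref{eq131} yields $\hat B^k=c\,\mbox{sym}(\tilde B^k)+A^\intercal A\to c\,\nabla^2\psi_\mu(W^*x_{c,\mu})+A^\intercal A=\nabla^2 f_c^\mu(x_{c,\mu})$, which is the claim. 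The only point needing care is the middle step: one must take the \emph{exact} limiting values of the dual variables from Theorem~\ref{thm:6} — rather than, say, reading them off the update formulas in \eqref{eq108} — and check that feeding them into the entries of the $B_i^k$ reproduces precisely the $\tilde B_i$ of \eqref{eq121}; the rest is continuity of elementary matrix operations.
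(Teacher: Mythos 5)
Your proposal is correct and follows essentially the same route as the paper: invoke Theorems~\ref{thm:5} and~\ref{thm:6} for the primal and dual limits, substitute into the definition of $B$ in \eqref{bd55} to see that $B^k$ tends to the symmetric matrix $\nabla^2 f_c^\mu(x_{c,\mu})$, and conclude that its symmetric part $\hat{B}^k$ has the same limit. You simply spell out the entrywise verification that the $B_i^k$ converge to the $\tilde{B}_i$ of \eqref{eq121}, which the paper leaves implicit.
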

\begin{proof}
From Theorem \ref{thm:6} we have that $g_{re}^k \to DReW^\intercal x_{c,\mu}$ and $g_{im}^k \to DImW^\intercal x_{c,\mu}$ for $k\to \infty$,
where $D$ is defined in \eqref{bd61} and is measured at $x_{c,\mu}$.
Using this in the definition of $B$ in \eqref{bd55} we have that ${B}^k$ tends to the symmetric
matrix $\nabla^2 f_c^\mu(x_{c,\mu})$. Therefore,  since $\hat{B}^k$ is the symmetric part of $B^k$, we have that $\hat{B}^k \to \nabla^2 f_c^\mu(x_{c,\mu}) $.
\end{proof}

\subsection{Local Rate of Convergence}
The following lemma shows that the length of the primal directions $\|\Delta x^k\|_2$ calculated in step $2$ of pdNCG
is of order $\|\nabla f_c^\mu(x^k)\|_2$.
\begin{lemma}\label{lem:14}
Let condition \eqref{bd9} hold. Let $\Delta x^k$ be the primal direction calculated in step $2$ of pdNCG, which satisfies \eqref{bd59}. Then
$
\|\Delta x^k\|_2 = \mathcal{O}(\|\nabla f_c^\mu(x^k)\|_2)
$
for $k\to \infty$.
\end{lemma}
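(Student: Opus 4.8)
The plan is to use the PCG termination criterion together with uniform bounds on $\hat B^k$ near the optimum. First I would recall that $\Delta x^k$ satisfies $\|\hat B^k \Delta x^k + \nabla f_c^\mu(x^k)\|_2 \le \eta\|\nabla f_c^\mu(x^k)\|_2$ for some $\eta\in[0,1)$, by \eqref{bd59}. Rearranging the triangle inequality gives $\|\hat B^k \Delta x^k\|_2 \le (1+\eta)\|\nabla f_c^\mu(x^k)\|_2$, hence $\|\Delta x^k\|_2 \le (1+\eta)\|(\hat B^k)^{-1}\|_2\,\|\nabla f_c^\mu(x^k)\|_2$. So the whole task reduces to showing that $\|(\hat B^k)^{-1}\|_2$ stays bounded as $k\to\infty$, equivalently that the smallest eigenvalue of $\hat B^k$ is bounded away from zero uniformly in $k$.

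For the uniform lower bound on the spectrum of $\hat B^k$, the key is Lemma \ref{lem:30}: since PCG is initialized at zero with criterion \eqref{bd59}, we have $\hat B^k \to \nabla^2 f_c^\mu(x_{c,\mu})$ as $k\to\infty$. By Lemma \ref{lem:21}, every $\hat B^k$ is positive definite, and the limit $\nabla^2 f_c^\mu(x_{c,\mu})$ is positive definite as well (this follows from condition \eqref{bd9} via \eqref{bd54} and the positive definiteness argument behind Lemma \ref{lem:21} / Remark \ref{rem:1}, applied at the optimum where $\|g_{re}+\sqrt{-1}g_{im}\|_\infty\le 1$). Let $\lambda_{\min}>0$ denote the smallest eigenvalue of $\nabla^2 f_c^\mu(x_{c,\mu})$. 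By continuity of eigenvalues under the matrix norm, there is a $k_0$ such that for all $k\ge k_0$ the smallest eigenvalue of $\hat B^k$ is at least $\lambda_{\min}/2$, so $\|(\hat B^k)^{-1}\|_2 \le 2/\lambda_{\min}$ for all $k\ge k_0$. Combining this with the estimate from the previous paragraph yields $\|\Delta x^k\|_2 \le \frac{2(1+\eta)}{\lambda_{\min}}\|\nabla f_c^\mu(x^k)\|_2$ for all $k\ge k_0$, which is exactly $\|\Delta x^k\|_2 = \mathcal{O}(\|\nabla f_c^\mu(x^k)\|_2)$ as $k\to\infty$.

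The main obstacle is making the convergence $\hat B^k \to \nabla^2 f_c^\mu(x_{c,\mu})$ of Lemma \ref{lem:30} interact cleanly with the spectral bound: one must ensure that $\eta$ can be treated as a fixed constant (or at least bounded by some $\bar\eta<1$) across iterations, and that the passage from "$\hat B^k$ converges to a positive definite matrix" to "$\|(\hat B^k)^{-1}\|_2$ is eventually bounded" is justified — this is a standard continuity-of-the-inverse argument, but it does rely on $\hat B^k$ actually being invertible for each $k$, which is precisely Lemma \ref{lem:21}. If one instead wants a bound valid for \emph{all} $k$ rather than just $k\ge k_0$, one can note that $\{x^k\}$ remains in a compact sublevel set (as in the proof of Theorem \ref{thm:5}) and that $\hat B^k$ depends continuously on $(x^k,g_{re}^k,g_{im}^k)$ with the dual iterates confined to the $\ell_\infty$ ball, so $\lambda_{\min}(\hat B^k)$ attains a positive minimum over the relevant compact region; but for the asymptotic statement "$k\to\infty$" the simpler argument above suffices.
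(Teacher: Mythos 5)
Your proposal is correct and follows essentially the same route as the paper: bound $\|\Delta x^k\|_2$ by $\|(\hat B^k)^{-1}\|_2$ times a constant multiple of $\|\nabla f_c^\mu(x^k)\|_2$ using the residual criterion \eqref{bd59}, then argue that $\|(\hat B^k)^{-1}\|_2$ stays bounded. In fact your treatment of the last step is more complete than the paper's, which only cites the positive definiteness of each $\hat B^k$ (Lemma \ref{lem:21}) and asserts boundedness of the inverse; your appeal to Lemma \ref{lem:30} and the positive definiteness of the limit $\nabla^2 f_c^\mu(x_{c,\mu})$ supplies the uniform lower bound on the spectrum that is actually needed for the asymptotic claim.
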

\begin{proof}
Let $\tilde{r}_c^\mu(x^k):=\hat{B}^k\Delta x^k + \nabla f_c^\mu(x^k)$. Then
using Cauchy-Schwartz on $\Delta x^k  = (\hat{B}^k)^{-1}(-\nabla f_c^\mu(x^k) + \tilde{r}_c^\mu(x^k)) $ we get
\begin{align*}
\|\Delta x^k\|_2 & \le \|(\hat{B}^k)^{-1}\|_2\|-\nabla f_c^\mu(x^k) + \tilde{r}_c^\mu(x^k)\|_2.
\end{align*}
From \eqref{bd59} and $\eta < 1$ we have that $\|\tilde{r}_c^\mu(x^k)\|_2 \le \|\nabla f_c^\mu(x^k)\|_2$. Hence,
$
 \|\Delta x^k\|_2 \le 2\|(\hat{B}^k)^{-1}\|_2 \|\nabla f_c^\mu(x^k)\|_2.
$
From Lemma \ref{lem:21} we have that $\hat{B}^k$ is positive definite $\forall k$. Therefore
$\|(\hat{B}^k)^{-1}\|_2$ is bounded as $k\to \infty$ and
the result in the preamble of this lemma holds.
\end{proof}

We now have all the tools to establish local superlinear rate of convergence of pdNCG. 
\begin{theorem}\label{thm:30}
Let condition \eqref{bd9} hold. Let PCG be initialized with the zero solution and terminated according to criterion \eqref{bd59}. If $\eta^k$ in \eqref{bd59} satisfies $\lim_{k\to\infty}\eta^k = 0$,
then pdNCG converges superlinearly. 
\end{theorem}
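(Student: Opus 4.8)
The plan is to show that pdNCG eventually behaves like an inexact Newton method with vanishing forcing sequence, and then invoke the classical Dembo–Eisenstat–Steihaug-type argument to conclude superlinear convergence. Since by Theorem \ref{thm:5} we already have $x^k \to x_{c,\mu}$, all the work is local, in a neighbourhood of $x_{c,\mu}$ where $\nabla^2 f_c^\mu$ is positive definite (this holds under \eqref{bd9}, as used repeatedly above).

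\emph{Step 1: The unit step is eventually accepted.} First I would show that for $k$ large the backtracking line-search in step $4$ returns $\alpha=1$. The natural way is to combine Lipschitz continuity of $\nabla^2 f_c^\mu$ (Lemma \ref{lem:4}) with the fact that $\hat B^k \to \nabla^2 f_c^\mu(x_{c,\mu})$ (Lemma \ref{lem:30}) and that the residual in \eqref{bd59} satisfies $\|\tilde r_c^\mu(x^k)\|_2 \le \eta^k\|\nabla f_c^\mu(x^k)\|_2$ with $\eta^k\to 0$. A Taylor expansion of $f_c^\mu$ along $\Delta x^k$, using $\Delta x^k = -(\hat B^k)^{-1}(\nabla f_c^\mu(x^k) - \tilde r_c^\mu(x^k))$ and the identity $(\Delta x^k)^\intercal \hat B^k \Delta x^k = -(\Delta x^k)^\intercal\nabla f_c^\mu(x^k)$ from Lemma \ref{lem:9}, gives $f_c^\mu(x^k+\Delta x^k) \le f_c^\mu(x^k) - \tfrac12\|\Delta x^k\|_{\hat B^k}^2 + o(\|\Delta x^k\|_{\hat B^k}^2)$, which for large $k$ dominates the Armijo requirement $f_c^\mu(x^k) - \tau_2\|\Delta x^k\|_{\hat B^k}^2$ since $\tau_2 < 1/2$. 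Hence $x^{k+1} = x^k + \Delta x^k$ eventually.

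\emph{Step 2: Superlinear contraction of the gradient / iterates.} With the unit step in force, write
\[
\nabla f_c^\mu(x^{k+1}) = \nabla f_c^\mu(x^k) + \nabla^2 f_c^\mu(x^k)\Delta x^k + \big(\nabla f_c^\mu(x^{k+1}) - \nabla f_c^\mu(x^k) - \nabla^2 f_c^\mu(x^k)\Delta x^k\big),
\]
where the parenthesised term is $\mathcal O(\|\Delta x^k\|_2^2)$ by Lemma \ref{lem:4}. Replacing $\nabla^2 f_c^\mu(x^k)$ by $\hat B^k$ introduces an error $(\hat B^k - \nabla^2 f_c^\mu(x^k))\Delta x^k$ which, by Lemma \ref{lem:30}, is $o(1)\cdot\|\Delta x^k\|_2$; together with $\hat B^k\Delta x^k + \nabla f_c^\mu(x^k) = \tilde r_c^\mu(x^k)$ and $\|\tilde r_c^\mu(x^k)\|_2\le\eta^k\|\nabla f_c^\mu(x^k)\|_2$ with $\eta^k\to0$, this yields $\|\nabla f_c^\mu(x^{k+1})\|_2 = o(\|\nabla f_c^\mu(x^k)\|_2)$. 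Finally, since near $x_{c,\mu}$ the Hessian is uniformly positive definite, $\|x^k - x_{c,\mu}\|_2$ is equivalent to $\|\nabla f_c^\mu(x^k)\|_2$ (and by Lemma \ref{lem:14}, $\|\Delta x^k\|_2 = \mathcal O(\|\nabla f_c^\mu(x^k)\|_2)$), so $\|x^{k+1} - x_{c,\mu}\|_2 = o(\|x^k - x_{c,\mu}\|_2)$, i.e. superlinear convergence.

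\emph{Main obstacle.} The delicate point is Step 1 — proving the unit step is ultimately accepted — because the search direction is generated by PCG applied to the \emph{asymmetric-part-symmetrised} matrix $\hat B^k$, not to the true Hessian, and PCG is moreover truncated inexactly. One must carefully bookkeep the three perturbations simultaneously: (i) $\hat B^k$ versus $\nabla^2 f_c^\mu(x^k)$ (controlled by Lemma \ref{lem:30}, but only asymptotically), (ii) the inexactness $\tilde r_c^\mu(x^k)$ with forcing term $\eta^k\to0$, and (iii) the Hessian Lipschitz remainder. The quantities $\|\Delta x^k\|_2$, $\|\Delta x^k\|_{\hat B^k}$ and $\|\nabla f_c^\mu(x^k)\|_2$ must all be shown comparable (via uniform bounds on $\|(\hat B^k)^{-1}\|_2$ and $\|\hat B^k\|_2$ near $x_{c,\mu}$, available from Lemma \ref{lem:21} and continuity), so that the $o(\cdot)$ terms genuinely beat the fixed Armijo constant $\tau_2 < 1/2$. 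Once that is in place the rest is the standard inexact-Newton estimate.
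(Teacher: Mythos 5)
Your proposal is correct and rests on the same pillars as the paper's proof: the bound $\|\Delta x^k\|_2=\mathcal{O}(\|\nabla f_c^\mu(x^k)\|_2)$ from Lemma \ref{lem:14}, the convergence $\hat B^k\to\nabla^2 f_c^\mu(x_{c,\mu})$ from Lemma \ref{lem:30}, the Lipschitz continuity of the Hessian from Lemma \ref{lem:4}, and the vanishing forcing term $\eta^k$ in \eqref{bd59}. The difference is in how the final step is executed. The paper forms the \emph{true} Newton residual $r_c^\mu(x^k)=\nabla^2 f_c^\mu(x^k)\Delta x^k+\nabla f_c^\mu(x^k)$, shows by adding and subtracting $\hat B^k\Delta x^k$ that $\|r_c^\mu(x^k)\|_2\le(\eta^k+\|\nabla^2 f_c^\mu(x^k)-\hat B^k\|_2)\,\mathcal{O}(\|\nabla f_c^\mu(x^k)\|_2)=o(\|\nabla f_c^\mu(x^k)\|_2)$, and then simply cites part (a) of Theorem 3.3 of Dembo--Eisenstat--Steihaug to conclude superlinear convergence. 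You instead re-derive that classical result by hand in your Step 2, which is fine and gives the same conclusion; what you buy is self-containedness at the cost of length.

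Your Step 1 deserves a specific comment: you are right that the cited inexact-Newton theorem is stated for full steps $x^{k+1}=x^k+\Delta x^k$, whereas pdNCG employs a backtracking line search, so strictly speaking one must verify that $\alpha=1$ is eventually accepted. The paper's proof does not address this point explicitly, so your Step 1 is not merely bookkeeping --- it closes a gap the paper leaves implicit. Your sketch of it is sound: since $(\Delta x^k)^\intercal\hat B^k\Delta x^k=-(\Delta x^k)^\intercal\nabla f_c^\mu(x^k)$ (from the zero initialization of PCG), a second-order Taylor expansion together with $\hat B^k\to\nabla^2 f_c^\mu(x_{c,\mu})$ and $\tau_2<1/2$ shows the Armijo test in step $4$ of Figure \ref{fig:2} is satisfied at $j=0$ for all large $k$. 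The comparability of $\|\Delta x^k\|_2$, $\|\Delta x^k\|_{\hat B^k}$ and $\|\nabla f_c^\mu(x^k)\|_2$ that you flag as the delicate point follows from Lemma \ref{lem:21} (uniform positive definiteness of $\hat B^k$) together with Lemma \ref{lem:30}, so nothing further is missing. In short: same strategy as the paper, executed more completely.
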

\begin{proof}
Let $r_c^\mu(x^k) :=\nabla^2 f_c^\mu(x^k)\Delta x^k + \nabla f_c^\mu(x^k)$. We rewrite
$$
r_c^\mu(x^k)=\nabla^2 f_c^\mu(x^k)\Delta x^k  + \hat{B}^k\Delta x^k - \hat{B}^k\Delta x^k + \nabla f_c^\mu(x^k).
$$
Using Cauchy-Schwarz, \eqref{bd59} and Lemma \ref{lem:14} we have that for $k \to \infty$
\begin{align} \label{eq123}
\|r_c^\mu(x^k)\|_2 & \le \|\hat{B}^k\Delta x^k + \nabla f_c^\mu(x^k)\|_2 + \|\nabla^2 f_c^\mu(x^k)\Delta x^k - \hat{B}^k\Delta x^k\|_2 \nonumber \\
                              & \le \eta^k \|\nabla f_c^\mu(x^k)\|_2 +  \|\nabla f_c^\mu(x^k) - \hat{B}^k\|_2\|\Delta x^k\|_2 \nonumber \\
                              & = (\eta^k + \|\nabla f_c^\mu(x^k) - \hat{B}^k\|_2)\mathcal{O}(\|\nabla f_c^\mu(x^k)\|_2).
\end{align}

From Lemma \ref{lem:30} we have that $\hat{B}^k \to \nabla^2 f_c^\mu(x_{c,\mu})$ for $k\to \infty$.
and
from Theorem \ref{thm:5} we have that $\nabla^2 f_c^\mu(x^k) \to \nabla^2 f_c^\mu(x_{c,\mu})$ for $k\to \infty$.
Hence, if $\lim_{k\to\infty}\eta^k = 0$, then we have from \eqref{eq123}  that $\|r_c^\mu(x^k)\|_2=o(\|\nabla f_c^\mu(x^k)\|_2)$.
Moreover, from Lemma \ref{lem:4} we have that the Hessian of $f_c^\mu(x)$ is Lipschitz continuous. 
Therefore all conditions of part $(a)$ of Theorem $3.3$ in \cite{mybib:DES}
are satisfied, consequently pdNCG converges with superlinear rate of convergence. 
\end{proof}

\section{Preconditioning} \label{sec:cs}

Practical computational efficiency of pdNCG applied to system \eqref{eq108} depends on spectral properties of matrix $\hat{B}$ in \eqref{eq109}. Those can be improved by a suitable preconditioning. In this section we introduce a new preconditioner for $\hat{B}$ and discuss the limiting behaviour of the spectral properties of preconditioned $\hat{B}$.

First,  we give an intuitive analysis on the construction of 
the proposed preconditioner.
In Remark \ref{rem:3} it is mentioned that the distance $\omega$ of the two solutions $x_{c}$ and $x_{c,\mu} $ can be arbitrarily small for sufficiently small values of $\mu$.
Moreover, according to Assumption \ref{assum:1}, $W^*x_c$ is $q$ sparse. Therefore, Remark \ref{rem:3} implies that $W^*x_{c,\mu}$ is approximately $q$ sparse
with nearly zero components of $\mathcal{O}(\omega)$. A consequence of the previous 
statement is that the components of $W^*x_{c,\mu} $ split into the following disjoint sets
\begin{equation*}
\begin{aligned}
&\mathcal{B}  := \{i \in \{1,2,\cdots,l\}\ | \ |W^*_ix_{c,\mu} |\gg \mathcal{O}(\omega)\}, 	&   |\mathcal{B}|=q=|\mbox{supp}(W^*x_c)|,\\
& & \\
&\mathcal{B}^c  := \{i \in \{1,2,\cdots,l\}\ | \ |W^*_ix_{c,\mu} |\approx \mathcal{O}(\omega)\}, &    |\mathcal{B}^c|=l - q.
\end{aligned}
\end{equation*}
The behaviour of $W^*x_{c,\mu}$ has a crucial effect on matrix $\nabla^2 \psi_\mu(W^*x_{c,\mu})$ in \eqref{nabla2psi}.
Notice that the components of the diagonal matrix $D$, defined in \eqref{bd61} as part of $\nabla^2 \psi_\mu(W^*x_{c,\mu})$, split into two disjoint sets. 
In particular, $q$ components are non-zeros much less than $\mathcal{O}({1}/{\omega})$, while the majority, $l-q$, of its components are of $\mathcal{O}({1}/{\omega})$, 
\begin{equation}\label{eq:2}
D_i \ll \mathcal{O}(\frac{1}{\omega}) \quad \forall i\in\mathcal{B}  \quad \mbox{and} \quad D_i = \mathcal{O}(\frac{1}{\omega}) \quad \forall i\in\mathcal{B}^c.
\end{equation}
Hence, for points close to $x_{c,\mu}$ and small $\mu$, matrix $\nabla^2 f_c^\mu(x)$ in \eqref{eq131} consists of a dominant matrix $c\nabla^2 \psi_\mu(x) $ and
of matrix $A^\intercal A$ with moderate largest eigenvalue. The previous argument for $A^\intercal A$ is due to \eqref{bd8}. Observe that $\lambda_{max}(A^\intercal A)=\lambda_{max}(A A^\intercal)$,
hence, if $\delta$ in \eqref{bd8} is not a very large constant, then $\lambda_{max}(A^\intercal A) \le 1+\delta $.
According to Lemma \ref{lem:30}, the symmetric matrix $\mbox{sym}(\tilde{B})$ in \eqref{eq131} tends to matrix $\nabla^2 \psi_\mu(x)$ as $x \to x_{c,\mu}$. 
Therefore, matrix $\mbox{sym}(\tilde{B})$ is the dominant matrix in $\hat{B}$.
For this reason, in the proposed preconditioning technique, matrix $A^\intercal A$ in \eqref{eq131} is replaced by a scaled identity $\rho I_n$, $\rho>0$, while 
the dominant matrix $\mbox{sym}(\tilde{B})$ is maintained.
Based on these observations we propose the following preconditioner
\begin{equation}\label{bd10}
\tilde{N} := c\, \mbox{sym}(\tilde{B}) + \rho I_n.
\end{equation}
In order to capture the approximate separability of the diagonal components of matrix $D$ 
for points close to $x_{c,\mu}$, when $\mu$ is sufficiently small,
we will work with approximate guess of $\mathcal{B}$ and $\mathcal{B}^c$. 
For this reason, we introduce the positive constant $\nu$,
such that
$$
\# (D_i < \nu) = \sigma.
$$
Here $\sigma$ might be different from the sparsity of $W^*x_c$. Furthermore, according to the above definition we have the sets
\begin{equation}\label{eq:1}
\mathcal{B}_\nu := \{i \in \{1,2,\cdots,l\}\ | \ D_i < \nu \} \quad \mbox{and} \quad \mathcal{B}_\nu^c :=  \{1,2,\cdots,l\} \backslash \mathcal{B}_\nu,
\end{equation}
with $|\mathcal{B}_\nu|=\sigma$ and $|\mathcal{B}_\nu^c|=l-\sigma$. 
This notation is being used in the following theorem, in which
we analyze the behaviour of the spectral properties of preconditioned $\nabla^2 f_c^{\mu}(x)$, with preconditioner
$N:= c\nabla^2 \psi_\mu(W^*x) + \rho I_n$.
However, according to Lemma \ref{lem:30} matrices $\hat{B}$ and $\tilde{N}$ tend to $\nabla^2 f_c^\mu(x)$ and $N$, respectively, as $x\to x_{c,\mu}$.
Therefore, the following theorem is useful for
the analysis of the limiting behaviour of the spectral properties of preconditioned $\hat{B}$.
\begin{theorem}\label{thm:3}
Let $\nu$ be any positive constant and $\#(D_i < \nu) = \sigma$ at a point $x$, where $D$ is defined in \eqref{bd61}. 
Let 
$$
\nabla^2 f_c^\mu(x) = c\nabla^2 \psi_\mu(W^*x) + A^\intercal A \quad \mbox{and} \quad N:= c\nabla^2 \psi_\mu(W^*x) + \rho I_n.
$$
Additionally, let $A$ and $W$ satisfy W-RIP
with some constant $\delta_{\sigma} < 1/2$ and let $A$ satisfy \eqref{bd8} for some constant $\delta\ge0$.\\
If the eigenvectors of $N^{-\frac{1}{2}}\nabla^2 f_c^\mu(x) N^{-\frac{1}{2}} $ do not belong in $\mbox{Ker}(W^*_{\mathcal{B}_\nu^c})$ and $\rho\in[\delta_{\sigma},1/2]$, then the eigenvalues of $N^{-1}\nabla^2 f_c^\mu(x)$ satisfy
$$
|\lambda -1| \le \frac{1}{2}\frac{{\chi + 1} + (5\chi^2 - 2\chi + 1)^{\frac{1}{2}}}{ c\mu^2\nu^3\lambda_{min}(\mbox{Re}(W_{\mathcal{B}_\nu^c}W^*_{\mathcal{B}_\nu^c}))  + \rho},
$$
where $\lambda\in\mbox{spec}(N^{-1}\nabla^2 f_c^\mu(x) )$, $\lambda_{min}(\mbox{Re}(W_{\mathcal{B}_\nu^c}W^*_{\mathcal{B}_\nu^c}))$ is the minimum nonzero eigenvalue of $\mbox{Re}(W_{\mathcal{B}_\nu^c}W^*_{\mathcal{B}_\nu^c})$
and  $\chi:= 1+\delta - \rho$. \\
If the eigenvectors of $N^{-\frac{1}{2}}\nabla^2 f_c^\mu(x) N^{-\frac{1}{2}} $ belong in $\mbox{Ker}(W^*_{\mathcal{B}_\nu^c})$, then
$$
|\lambda -1| \le \frac{1}{2}\frac{{\chi + 1} + (5\chi^2 - 2\chi + 1)^{\frac{1}{2}}}{\rho}.
$$
\end{theorem}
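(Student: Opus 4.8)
The plan is to study the generalised eigenproblem $\nabla^2 f_c^\mu(x)\,u = \lambda\, N u$, whose nonzero solutions $u$ are in one-to-one correspondence with the eigenvectors of the real symmetric matrix $N^{-\frac12}\nabla^2 f_c^\mu(x)N^{-\frac12}$ via $v = N^{\frac12}u$ (note $N = c\nabla^2\psi_\mu(W^*x) + \rho I_n \succeq \rho I_n \succ 0$, so $N^{-\frac12}$ is well defined). Writing $\nabla^2 f_c^\mu(x) = N + (A^\intercal A - \rho I_n)$, every eigenpair satisfies $(A^\intercal A - \rho I_n)u = (\lambda-1)Nu$, so taking the inner product with $u$ gives the exact identity
\[
|\lambda - 1| = \frac{|u^\intercal(A^\intercal A - \rho I_n)u|}{u^\intercal N u}.
\]
It therefore suffices to bound the numerator from above and the denominator from below, each as a multiple of $\|u\|_2^2$ (the factor $\|u\|_2^2$ then cancels), and the two cases of the theorem will reflect whether or not $u$ is orthogonal to $\mbox{Ker}(W^*_{\mathcal{B}_\nu^c})$.

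For the denominator I would use $u^\intercal N u = c\,u^\intercal\nabla^2\psi_\mu(W^*x)u + \rho\|u\|_2^2$ together with the diagonal structure of $\nabla^2\psi_\mu$ in \eqref{nabla2psi}--\eqref{bd62}, which yields $u^\intercal\nabla^2\psi_\mu(W^*x)u \ge \mu^2\sum_{i=1}^l D_i^3|W_i^*u|^2$. Discarding the nonnegative terms indexed by $\mathcal{B}_\nu$ and using $D_i \ge \nu$ for $i\in\mathcal{B}_\nu^c$ (by the definition \eqref{eq:1}) gives $u^\intercal\nabla^2\psi_\mu(W^*x)u \ge \mu^2\nu^3\,u^\intercal\mbox{Re}(W_{\mathcal{B}_\nu^c}W^*_{\mathcal{B}_\nu^c})u$. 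If $u$ is orthogonal to $\mbox{Ker}(W^*_{\mathcal{B}_\nu^c})$, the last quadratic form is at least $\lambda_{min}(\mbox{Re}(W_{\mathcal{B}_\nu^c}W^*_{\mathcal{B}_\nu^c}))\|u\|_2^2$ with $\lambda_{min}$ the smallest nonzero eigenvalue, so $u^\intercal N u \ge (c\mu^2\nu^3\lambda_{min}(\mbox{Re}(W_{\mathcal{B}_\nu^c}W^*_{\mathcal{B}_\nu^c})) + \rho)\|u\|_2^2$; if instead $u\in\mbox{Ker}(W^*_{\mathcal{B}_\nu^c})$ this contribution vanishes and only $u^\intercal N u \ge \rho\|u\|_2^2$ survives. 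These give the two denominators in the statement.

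For the numerator I would split $u = u_S + u_{S^\perp}$ orthogonally, with $S$ the span of the columns of $W$ indexed by $\mathcal{B}_\nu$, so that $u_S = Wz$ for some $z$ supported on $\mathcal{B}_\nu$ and hence at most $\sigma$-sparse. Applying W-RIP (Definition \ref{def:1} with $q = \sigma$) to $u_S$ and invoking $\rho\in[\delta_{\sigma},1/2]$ gives $0 \le u_S^\intercal(A^\intercal A - \rho I_n)u_S \le \|u_S\|_2^2$. For the remaining pieces I would use only the near-orthogonality \eqref{bd8}: since $\lambda_{max}(A^\intercal A) = \lambda_{max}(AA^\intercal) \le 1+\delta$, the eigenvalues of $A^\intercal A - \rho I_n$ lie in $[-\rho,\,1+\delta-\rho]$, hence $\|A^\intercal A - \rho I_n\|_2 \le \chi$ (using $\rho\le 1/2$), which bounds $|u_{S^\perp}^\intercal(A^\intercal A - \rho I_n)u_{S^\perp}| \le \chi\|u_{S^\perp}\|_2^2$ and the cross term by $\chi\|u_S\|_2\|u_{S^\perp}\|_2$. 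Collecting terms with $a = \|u_S\|_2$, $b = \|u_{S^\perp}\|_2$,
\[
|u^\intercal(A^\intercal A - \rho I_n)u| \ \le\ a^2 + 2\chi ab + \chi b^2 \ \le\ \frac{1}{2}\left(\chi + 1 + \sqrt{5\chi^2 - 2\chi + 1}\right)(a^2+b^2),
\]
where the last step is the largest eigenvalue of $\left[\begin{array}{cc}1 & \chi\\ \chi & \chi\end{array}\right]$ and $a^2+b^2 = \|u\|_2^2$. Dividing the numerator bound by each of the two denominator bounds yields the two inequalities of the theorem.

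The step I expect to be the main obstacle is making the subspace decomposition in the numerator estimate rigorous when $W$ is complex: for $u\in\mathbb{R}^n$ the orthogonal projection onto the complex span of the columns of $W_{\mathcal{B}_\nu}$ need not be real, so one must instead split off the part of $u$ lying in the real span of $\mbox{Re}(W_{\mathcal{B}_\nu})$ and $\mbox{Im}(W_{\mathcal{B}_\nu})$ and verify that W-RIP still controls $\|Au_S\|_2$ there. A second delicate point is the hypothesis on eigenvectors: the argument genuinely needs the generalised eigenvector $u = N^{-\frac12}v$ to lie in $\mbox{Ker}(W^*_{\mathcal{B}_\nu^c})^\perp$ (not merely outside the kernel), and since $N$ does not preserve this orthogonal splitting the condition is most safely phrased in terms of $u$ itself; in the generic situation one can always fall back on $u^\intercal N u \ge \rho\|u\|_2^2$, which is the second, universally valid bound.
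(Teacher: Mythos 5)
Your proposal is correct and follows essentially the same route as the paper: the same splitting $\nabla^2 f_c^\mu(x)=N+(A^\intercal A-\rho I_n)$, the same decomposition of the test vector into its component in the span of the columns of $W_{\mathcal{B}_\nu}$ plus an orthogonal remainder, the same use of W-RIP together with $\rho\in[\delta_\sigma,1/2]$ on the first piece and of \eqref{bd8} on the rest, and the same lower bound $u^\intercal Nu\ge c\mu^2\nu^3\,u^\intercal\mbox{Re}(W_{\mathcal{B}_\nu^c}W^*_{\mathcal{B}_\nu^c})u+\rho\|u\|_2^2$ for the denominator. Two of your choices are small improvements in rigor rather than a different argument: working with the generalized eigenproblem $\nabla^2 f_c^\mu(x)u=\lambda Nu$ keeps both quadratic forms symmetric, whereas the paper bounds $u^\intercal N^{\frac12}(A^\intercal A-\rho I_n)N^{-\frac12}u$ by appealing to the eigenvalues of the (non-symmetric) similar matrix, a step that strictly speaking controls only the spectrum and not the quadratic form; and your $2\times 2$ eigenvalue computation reproduces exactly the constant $\frac12(\chi+1+\sqrt{5\chi^2-2\chi+1})$ that the paper obtains by maximizing $\alpha+\chi(1-\alpha)+2\chi\sqrt{\alpha(1-\alpha)}$ over $\alpha\in[0,1]$. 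Your closing caveat is also well taken: to invoke the minimum \emph{nonzero} eigenvalue one needs the test vector orthogonal to $\mbox{Ker}(W^*_{\mathcal{B}_\nu^c})$, not merely outside it, and the hypothesis should be phrased for the generalized eigenvector $N^{-\frac12}v$ rather than $v$ itself — both points are present, unaddressed, in the paper's own proof, so your version is if anything slightly more careful.
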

\begin{proof}
We analyze the spectrum of matrix $N^{-\frac{1}{2}}\nabla^2 f_c^\mu(x) N^{-\frac{1}{2}}$ instead, because it has the same eigenvalues
as matrix $N^{-1} \nabla^2 f_c^\mu(x)$. We have that
\begin{align*}
N^{-\frac{1}{2}}\nabla^2 f_c^\mu(x) N^{-\frac{1}{2}} & =  N^{-\frac{1}{2}}(c\nabla^2\psi_{\mu}(x) + A^\intercal A ) N^{-\frac{1}{2}} \\  
                                                                     & =  N^{-\frac{1}{2}}(c\nabla^2\psi_{\mu}(x) + A^\intercal A  + \rho I_n - \rho I_n) N^{-\frac{1}{2}} \\
                                                                     & =  N^{-\frac{1}{2}}(c\nabla^2\psi_{\mu}(x) + \rho I_n ) N^{-\frac{1}{2}} +  N^{-\frac{1}{2}} A^\intercal A N^{-\frac{1}{2}} - \rho N^{-1}\\
                                                                     & = I_n  + N^{-\frac{1}{2}} A^\intercal A N^{-\frac{1}{2}} - \rho N^{-1}
\end{align*}
Let $u$ be an eigenvector of $N^{-\frac{1}{2}}\nabla^2 f_c^\mu(x) N^{-\frac{1}{2}} $ with $\|u\|_2=1$ and $\lambda$ the corresponding eigenvalue, then 
\begin{align}\label{eq:501}
(I_n + N^{-\frac{1}{2}} A^\intercal A N^{-\frac{1}{2}} - \rho N^{-1})u & = \lambda u& \Longleftrightarrow \nonumber \\
(N + N^{\frac{1}{2}} A^\intercal A N^{-\frac{1}{2}} - \rho I_n )u          & = \lambda N u& \Longrightarrow \nonumber \\
u^\intercal N^{\frac{1}{2}} (A^\intercal A - \rho I_n  )N^{-\frac{1}{2}}  u          & = (\lambda -1)u^\intercal N u& \Longrightarrow \nonumber \\
|u^\intercal N^{\frac{1}{2}} (A^\intercal A   - \rho I_n  )N^{-\frac{1}{2}}u|          & = |\lambda -1|u^\intercal N u.&
\end{align}
First, we find an upper bound for $|u^\intercal N^{\frac{1}{2}} (A^\intercal A   - \rho I_n  )N^{-\frac{1}{2}}u|$. Matrices $N^{\frac{1}{2}} (A^\intercal A   - \rho I_n  )N^{-\frac{1}{2}}$
and $A^\intercal A   - \rho I_n$ have the same eigenvalues. Therefore, 
\begin{align*}
|u^\intercal N^{\frac{1}{2}} (A^\intercal A   - \rho I_n  )N^{-\frac{1}{2}}u| \le \lambda_{max}^{+}(A^\intercal A - \rho I_n)
\end{align*}
where $\lambda_{max}^{+}(\cdot)$ is the largest eigenvalue of the input matrix in absolute value. Thus,
\begin{align*}
|u^\intercal N^{\frac{1}{2}} (A^\intercal A   - \rho I_n  )N^{-\frac{1}{2}}u|  &\le \mymax_{\|v\|_2^2\le 1} |v^\intercal(A^\intercal A - \rho I_n) v | \\
          												  & =  \mymax_{\|Pv\|_2^2 + \|Qv\|_2^2\le 1} |(Pv + Qv)^\intercal(A^\intercal A - \rho I_n) (Pv + Qv) |, 
\end{align*}
where $P$ is the projection matrix to the column space of $W_{\mathcal{B}_\nu}$ and $Q=I_n - P$. Using triangular inequality we get
\begin{align*}
|u^\intercal N^{\frac{1}{2}} (A^\intercal A   - \rho I_n  )N^{-\frac{1}{2}}u|  & \le  \mymax_{\|Pv\|_2^2 + \|Qv\|_2^2\le 1} \big(|(Pv)^\intercal (A^\intercal A - \rho I_n)Pv|  \nonumber \\
													   & + |(Qv)^\intercal (A^\intercal A - \rho I_n)Qv| 
 													      + 2 |(Pv)^\intercal (A^\intercal A - \rho I_n)Qv|\big).
\end{align*}
Let us denote by $\hat v$ the solution of this maximization problem and set $\|P \hat v\|_2^2=\alpha$ and $\|Q \hat v\|_2^2=1-\alpha$, where $\alpha \in [0,1]$, then
\begin{align}\label{eq:152}
|u^\intercal N^{\frac{1}{2}} (A^\intercal A   - \rho I_n  )N^{-\frac{1}{2}}u|  & \le \big(|(P \hat v)^\intercal (A^\intercal A - \rho I_n)P \hat v|  \nonumber \\
													   & + |(Q \hat v)^\intercal (A^\intercal A - \rho I_n)Q \hat v| 
 													      + 2 |(P \hat v)^\intercal (A^\intercal A - \rho I_n)Q \hat v|\big).
\end{align}
Since $P \hat v$ belongs to the column space of $W_{\mathcal{B}_\nu}$ and $|\mathcal{B}_\nu|=\sigma$, from W-RIP with $\delta_\sigma < 1/2$ we have that
\begin{align*}
\|P\hat v\|_2^2 (1-\delta_\sigma) & \le \|AP \hat v\|_2^2 & \Longrightarrow \\
\|P\hat v\|_2^2 (1-\rho) &\le \|AP\hat v\|_2^2 & \Longleftrightarrow \\
\|P\hat v\|_2^2 (1-2\rho) &\le \|AP\hat v\|_2^2 - \rho \|P\hat v\|_2^2.  & 
\end{align*}
Since $\rho\in [\delta_\sigma, 1/2]$ we have that 
$
\rho \|P\hat v\|_2^2 \le \|AP\hat v\|_2^2,
$
which implies that if the eigenvector corresponding to an eigenvalue of matrix $A^\intercal A$ belongs to the column space of $W_{\mathcal{B}_\nu}$, then
the eigenvalue cannot be smaller than $\rho$.
Hence,
$$
 |(P\hat v)^\intercal (A^\intercal A - \rho I_n)P\hat v| \le |(P\hat v)^* (A^\intercal A - \rho I_n)P\hat v| = (P\hat v)^* (A^\intercal A - \rho I_n)P\hat v.
$$
 Moreover, from W-RIP with $\delta_\sigma<1/2$ and $\rho\in [\delta_\sigma, 1/2]$, we also have that $(P\hat v)^* (A^\intercal A - \rho I_n)P\hat v \le \|P\hat v\|_2^2$. 
 Thus,
  \begin{equation}\label{eq:1000}
 |(P\hat v)^\intercal (A^\intercal A - \rho I_n)P\hat v| \le \alpha.
 \end{equation}
 From property \eqref{bd8} and $\lambda_{max}(A^\intercal A)=\lambda_{max}(A A^\intercal)$, we have that 
 $\lambda_{max}(A^\intercal A - \rho I_n) \le 1+\delta - \rho$. Finally, using the Cauchy-Schwarz inequality, we get that
 \begin{equation}\label{eq:1001}
|(Q\hat v)^\intercal (A^\intercal A - \rho I_n)Q\hat v| \le (1+\delta -\rho)(1-\alpha)
\end{equation}
and
 \begin{equation}\label{eq:1002}
 |(P \hat v)^\intercal (A^\intercal A - \rho I_n)Q\hat v|\le (1+\delta - \rho)\sqrt{\alpha(1-\alpha)}.
 \end{equation}
Using \eqref{eq:1000}, \eqref{eq:1001} and \eqref{eq:1002} in \eqref{eq:152} we have that 
\begin{align} \label{eq:153}
|u^\intercal N^{\frac{1}{2}} (A^\intercal A   - \rho I_n  )N^{-\frac{1}{2}}u|  
													   & \le \alpha + (1+\delta - \rho)(1-\alpha) + 2(1+\delta - \rho)\sqrt{\alpha(1-\alpha)}. 
\end{align}
Set 
$\chi:= 1+\delta - \rho$,
it is easy to check that in the interval $\alpha \in [0,1]$ the right hand side of \eqref{eq:153} has a maximum at one of the four candidate points
$$
\alpha_1 = 0, \quad \alpha_2 = 1, \quad \alpha_{3,4} = \frac{1}{2}(1 \pm \left(\frac{(\chi - 1)^2}{5\chi^2 - 2\chi + 1}\right)^{1/2}),
$$
where $\alpha_3$ is for plus and $\alpha_4$ is for minus. The corresponding function values are
$$
\chi, \quad 1, \quad \frac{\chi + 1}{2} + \frac{1}{2}\frac{3\chi^2 + 2\chi -1}{(5\chi^2 - 2\chi + 1)^{{1}/{2}}}, \quad  \frac{\chi + 1}{2} + \frac{1}{2}(5\chi^2 - 2\chi +1 )^{{1}/{2}},
$$
respectively.
Hence, the maximum among these four values is given for $\alpha_4$.
Thus, \eqref{eq:153} is upper bounded by
\begin{equation}\label{eq:154}
|u^\intercal N^{\frac{1}{2}} (A^\intercal A   - \rho I_n  )N^{-\frac{1}{2}}u|  \le \frac{\chi + 1}{2} + \frac{1}{2}(5\chi^2 - 2\chi + 1)^{\frac{1}{2}}.
\end{equation}
We now find a lower bound for $u^\intercal N u$. 
Using the definition of $D$ in \eqref{bd61}, matrix $\hat Y$ in \eqref{bd62} is rewritten as
$
\hat Y_i = (2\mu^2 + |y_i|^2) D_{i}^3\   \forall i=1,2,\cdots,l.
$
Thus $\nabla^2 \psi_\mu (x)$ in \eqref{nabla2psi} is rewritten as
\begin{align}\label{eq120}
\nabla^2\psi_{\mu}(W^*x) & =\frac{1}{4}[(W \tilde{D}^3 W^* + \bar W \tilde{D}^3 \bar W^* + W \tilde{Y} \bar W^* + \bar W \tilde{\bar Y} W^*) \\ 
                                         & + 2\mu^2 (W D^3 W^* + \bar W D^3 \bar W^*)],\nonumber
\end{align}
where $\tilde{D}_{i} =|y_i|^2 D_{i}^3$ $\forall i=1,2,\cdots,l$.
Observe, that matrix $\nabla^2\psi_{\mu}(W^*x)$ consists of two matrices $W \tilde{D}^3 W^* + \bar W \tilde{D}^3 \bar W^* + W \tilde{Y} \bar W^* + \bar W \tilde{\bar Y} W^*$ and 
$2\mu^2 (W D^3 W^* + \bar W D^3 \bar W^*)$ which are positive semi-definite. 
Using \eqref{eq120} and the previous statement 
we get that 
\begin{align*}
u^\intercal N u & = u^\intercal (c\nabla^2\psi_\mu(W^*x) + \rho I_n) u \\
                        & = \frac{c}{4}u^\intercal(W \hat{Y} W^* + \bar W \hat{ {Y}} \bar W^* + W \tilde{Y} \bar W^* + \bar W \tilde{\bar Y} W^* )u +\rho\\
                        & \ge \frac{c\mu^2}{2}u^\intercal (WD^3W^* + \bar{W}D^3\bar{W}^*)u +\rho.
\end{align*}
Furthermore, using the splitting of matrix $D$ \eqref{eq:1}, the last inequality is equivalent to
\begin{align*}
u^\intercal N u & = \frac{c\mu^2}{2}u^\intercal (W_{\mathcal{B}_\nu}D^3_{\mathcal{B}_\nu} W^*_{\mathcal{B}_\nu} + W_{\mathcal{B}_\nu^c}D^3_{\mathcal{B}_\nu^c} W^*_{\mathcal{B}_\nu^c} + 
                        \bar{W}_{\mathcal{B}_\nu}D^3_{\mathcal{B}_\nu}\bar{W}^*_{\mathcal{B}_\nu} + \bar{W}_{\mathcal{B}_\nu^c}D^3_{\mathcal{B}_\nu^c}\bar{W}^*_{\mathcal{B}_\nu^c})u   +\rho  \\
                        & \ge \frac{c\mu^2}{2}u^\intercal(W_{\mathcal{B}_\nu^c}D^3_{\mathcal{B}_\nu^c} W^*_{\mathcal{B}_\nu^c}  + \bar{W}_{\mathcal{B}_\nu^c}D^3_{\mathcal{B}_\nu^c}\bar{W}^*_{\mathcal{B}_\nu^c})u  + \rho. 
\end{align*}
Using the defition of $\mathcal{B}_\nu^c$ \eqref{eq:1} in the last inequality, the quantity $u^\intercal N u $ is further lower bounded by
\begin{equation}\label{eq:150}
                     u^\intercal N u    \ge \frac{c\mu^2\nu^3}{2}u^\intercal(W_{\mathcal{B}_\nu^c}W^*_{\mathcal{B}_\nu^c}  + \bar{W}_{\mathcal{B}_\nu^c}\bar{W}^*_{\mathcal{B}_\nu^c})u+\rho.
\end{equation}
If $u\notin \mbox{Ker}(W^*_{\mathcal{B}_\nu^c})$, then from \eqref{eq:150} we get
\begin{equation} \label{eq:151}
u^\intercal N u \ge  c\mu^2\nu^3\lambda_{min}(\mbox{Re}(W_{\mathcal{B}_\nu^c}W^*_{\mathcal{B}_\nu^c}))  + \rho. 
\end{equation}
Hence, combining \eqref{eq:501}, \eqref{eq:154} and \eqref{eq:151} we conclude that 
\begin{align*}
|\lambda-1|  \le \frac{1}{2}\frac{{\chi + 1} + (5\chi^2 - 2\chi + 1)^{\frac{1}{2}}}{ c\mu^2\nu^3\lambda_{min}(\mbox{Re}(W_{\mathcal{B}_\nu^c}W^*_{\mathcal{B}_\nu^c}))  + \rho} 
\end{align*}
If $u\in \mbox{Ker}(W^*_{\mathcal{B}_\nu^c})$, then from \eqref{eq:150} we have that $u^\intercal N u \ge \rho$, hence 
$$
|\lambda -1| \le \frac{1}{2}\frac{{\chi + 1} + (5\chi^2 - 2\chi + 1)^{\frac{1}{2}}}{\rho}.
$$
\end{proof}

Let us now draw some conclusions from Theorem \ref{thm:3}. In order for the eigenvalues of $N^{-1}\nabla^2 f_c^\mu(x)$ to be around one, it is required that the degree of freedom $\nu$ is chosen such that $\nu = \mathcal{O}(1/\mu)$
and $\mu$ is small.
For such $\nu$, the cardinality $\sigma$ of the set $\mathcal{B}_\nu$ must be small enough such that 
matrices $A$ and $W$ satisfy W-RIP with constant $\delta_{\sigma}< 1/2$; otherwise the assumptions of Theorem \ref{thm:3} will not be satisfied. 
This is possible if the pdNCG iterates are close to the optimal solution $x_{c,\mu}$ and $\mu$ is sufficiently small. In particular, for  sufficiently small $\mu$, 
from Remark \ref{rem:3} we have that $x_{c,\mu} \approx x_c$ and $\sigma \approx q$. 
According to Assumption \ref{assum:1} for the $q$-sparse $x_c$, W-RIP is satisfied for $\delta_{2q} < 1/2 \Longrightarrow \delta_q < 1/2$.
Hence, for points close to $x_{c,\mu}$ and small $\mu$ we expect that $\delta_\sigma < 1/2$.
Therefore, the result in Theorem \ref{thm:3} captures only the limiting behaviour of preconditioned $\nabla^2 f_c^\mu(x)$ as $x\to x_{c,\mu}$.
Moreover, according to Lemma \ref{lem:30}, Theorem \ref{thm:3}
implies that at the limit the eigenvalues of $\tilde{N}^{-1}\hat{B}$ are also clustered around one.
However, the scenario of limiting behaviour of the preconditioner is pessimistic.
Let $\tilde{\sigma}$ be the minimum sparsity level such that matrices $A$ and $W$ are W-RIP with $\delta_{\tilde{\sigma}} < 1/2$. Then,
according to the uniform property of W-RIP (i.e. it holds for all at most $\tilde{\sigma}$-sparse vectors), 
the preconditioner will start to be effective even if the iterates $W^*x^k$ are approximately sparse with $\tilde{\sigma}$ dominant non-zero components.
Numerical evidence is provided in Figure \ref{figSpec} which verifies the previous.
In Figure \ref{figSpec} the spectra $\lambda(\hat{B})$ and $\lambda(\tilde{N}^{-1}\hat{B})$ are displayed for a sequence of systems which arise 
when an iTV problem is solved. For this iTV problem we set matrix $A$ to be a partial $2D$ DCT, $n=2^{10}$, $m=n/4$, $c=2.29e$-$2$
and $\rho=5.0e$-$1$. 
For the experiment in Figures \ref{fig5_1_a} and \ref{fig5_1_b} the smoothing parameter has been set to $\mu=1.0e$-$3$
and in Figures \ref{fig5_1_c} and \ref{fig5_1_d} $\mu=1.0e$-$5$. Observe that for both cases the spectrum of matrix $\tilde{N}^{-1}\hat{B}$ is substantially restrained around one in comparison to the spectrum of matrix $\hat{B}$
which has large variations. Notice that the preconditioner was effective not only at optimality as it was predicted by theory, but through all iterations of pdNCG. This is because starting from the zero solution 
the iterates $W^*x^k$ were maintained approximately sparse $\forall k$. 
\begin{figure}
\centering
	\begin{subfigure}[b]{0.48\textwidth}
		\includegraphics[width=\textwidth]{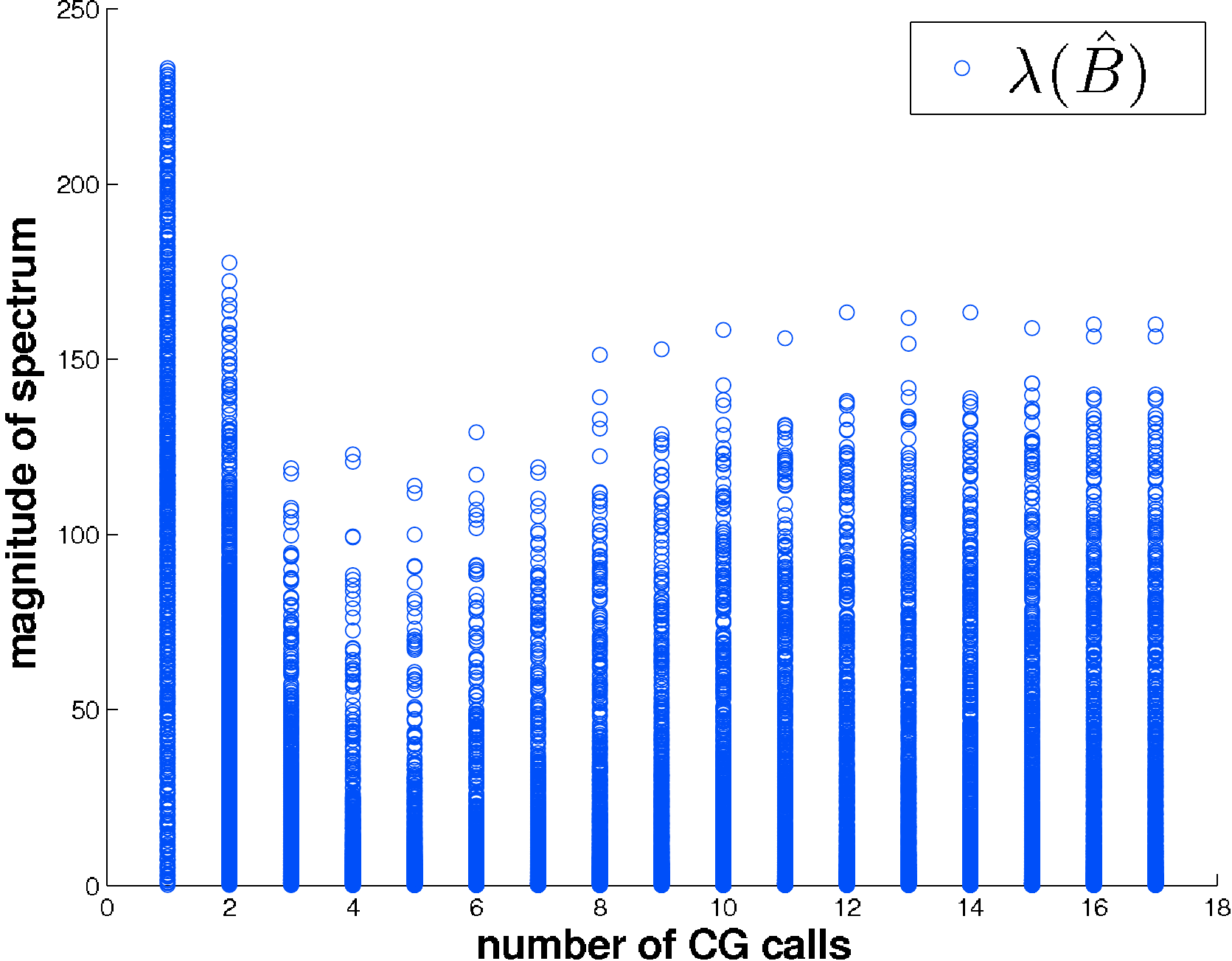}		
		\caption{Unpreconditioned}
		\label{fig5_1_a}%
         \end{subfigure}
         \quad
	\begin{subfigure}[b]{0.48\textwidth}
		\includegraphics[width=\textwidth]{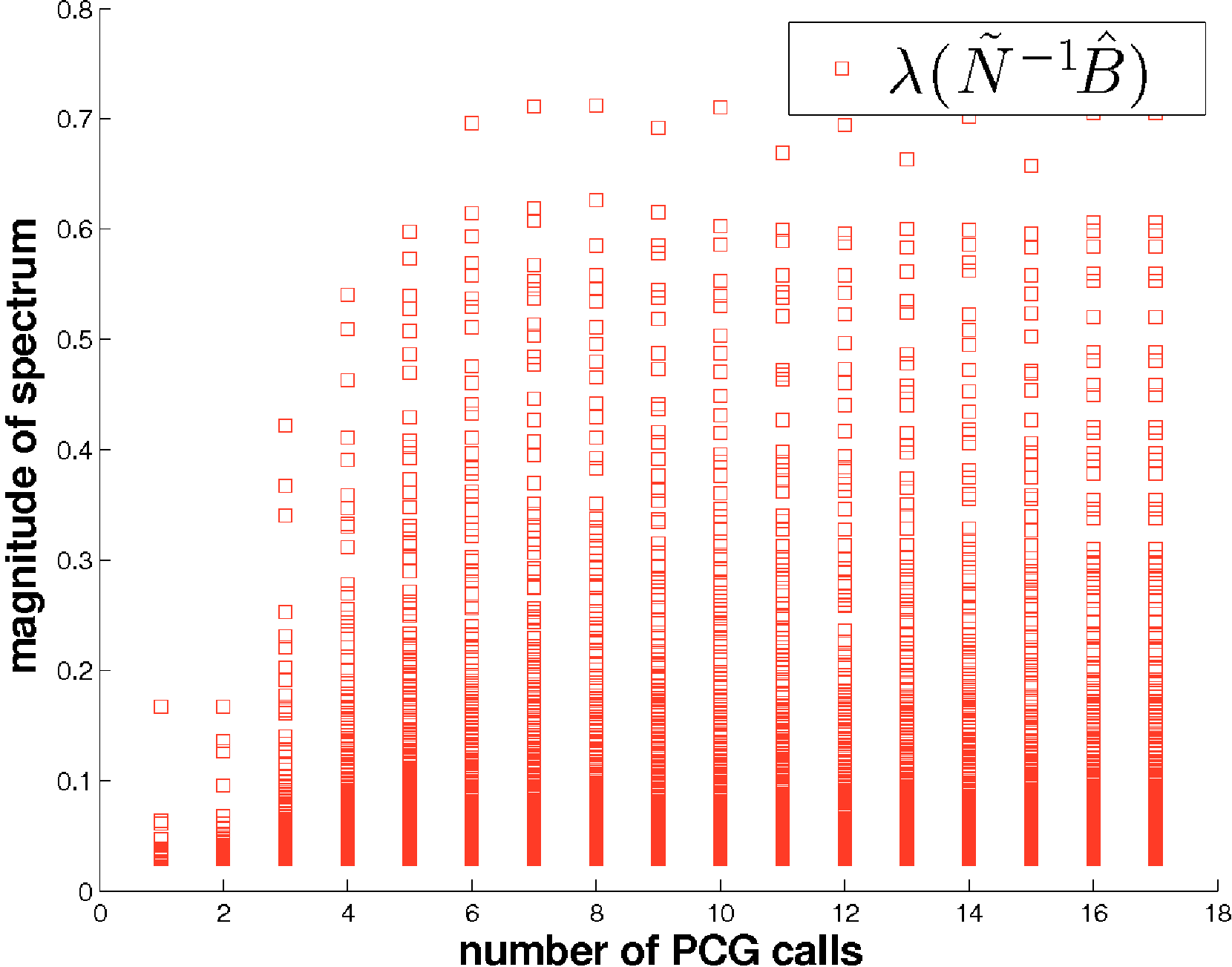}	
		\caption{Preconditioned}
		\label{fig5_1_b}%
         \end{subfigure}
	\begin{subfigure}[b]{0.48\textwidth}
		\includegraphics[width=\textwidth]{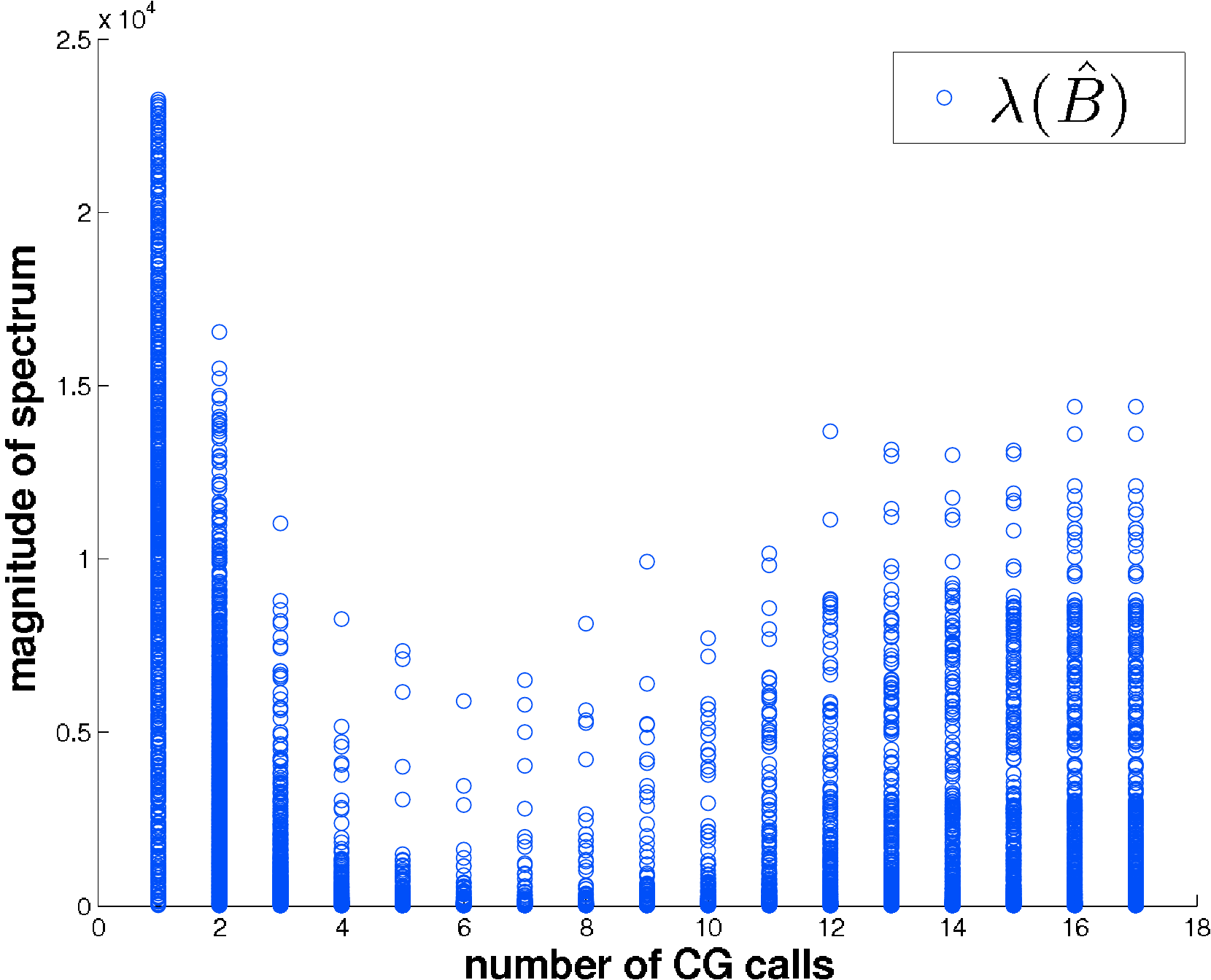}		
		\caption{Unpreconditioned}
		\label{fig5_1_c}%
         \end{subfigure}
         \quad
	\begin{subfigure}[b]{0.48\textwidth}
		\includegraphics[width=\textwidth]{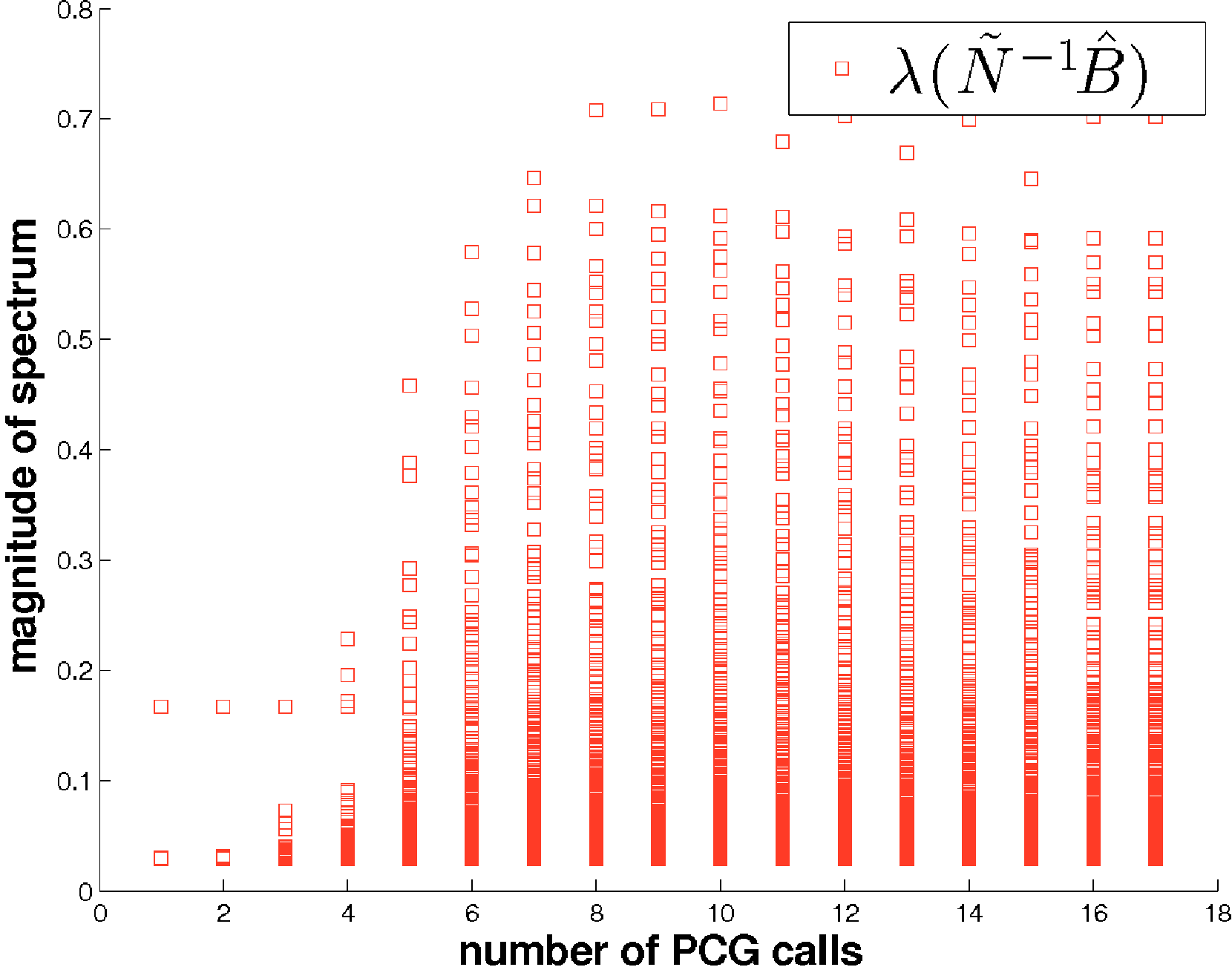}	
		\caption{Preconditioned}
		\label{fig5_1_d}%
         \end{subfigure}
	\caption{Spectra of $\lambda(\hat{B})$ and $\lambda(\tilde{N}^{-1}\hat{B})$ when pdNCG is applied with smoothing parameter $\mu=1.0e$-$3$ (top sub-figures)
	and $\mu=1.0e$-$5$ (bottom sub-figures). 
	Matrix $A$ in $\hat{B}$ is a $2D$ DCT, $n=2^{10}$, $m=n/4$ and $c=2.29e$-$2$. Seventeen systems are solved in total for each experiment.
	}
	\label{figSpec}%
\end{figure}

We now comment on the second result of Theorem \ref{thm:3}, when the eigenvectors of $N^{-\frac{1}{2}}\nabla^2 f_c^\mu(x) N^{-\frac{1}{2}} $ belong in $\mbox{Ker}(W^*_{\mathcal{B}_\nu^c})$.
In this case, according to Theorem \ref{thm:3} the preconditioner removes the disadvantageous dependence of the spectrum of $\nabla^2 f_c^\mu(x)$ on the smoothing parameter $\mu$.
However, there is no guarantee that the eigenvalues of $N^{-1}\nabla^2 f_c^\mu(x)$ are clustered around one, regardless of the distance from the optimal solution $x_{c,\mu}$. 
Again, because of Lemma \ref{lem:30} we expect that the spectrum 
of $\tilde{N}^{-1}\hat{B}$ at the limit will have a similar behaviour. 

Finally, a question arises regarding the computational cost of solving systems with preconditioner $\tilde{N}$; it is necessary that this operation is inexpensive. 
For iTV problems $W$ is a \textit{five-diagonal banded matrix}, which arises from discretization of the nabla operator applied on an image; see section $4.5$ in \cite{convexTemplates} for details about matrix $W$ for iTV problems.
From \eqref{eq121} we deduce that matrix $\tilde{N}$ is also five-diagonal banded matrix. Hence $\tilde{N}$ can be computed inexpensively and systems with it can be solved exactly and fast, by using specialized solvers for banded matrices.
Unfortunately, for $\ell_1$-analysis matrix $W$ has no structure. Therefore $\tilde{N}$ might be expensive to compute or store in memory. 
However, it is common that algorithms are available for fast matrix-vector products with matrices $W$ and $W^\intercal$. In this case, systems with matrix $\tilde{N}$ can be solved approximately using CG. 
The idea of approximate preconditioning has also been employed successfully for specialized image reconstruction problems, i.e. denoising, in \cite{approxprec}. 
In Figure \ref{fig8} we present the performance of preconditioner $\tilde{N}$  when it is used in an approximate setting. The tested problem is the same iTV problem which was described previously for Figures \ref{figSpec}, but the size of the problem
is changed to $n=2^{16}$ and $\mu=1.0e$-$5$. For the approximate solution of systems with the preconditioner $\tilde{N}$ we required from CG to perform $15$ iterations and then the process was truncated. Observe in Figure \ref{fig8} that this resulted in a substantial reduction in the number of PCG iterations
compared to the unpreconditioned case. 
Additionally, the approximate preconditioner setting was $1.4$ times faster in terms of the overall CPU time for convergence. 

\begin{figure}
\centering
	\begin{subfigure}[b]{0.7\textwidth}
		\includegraphics[width=\textwidth]{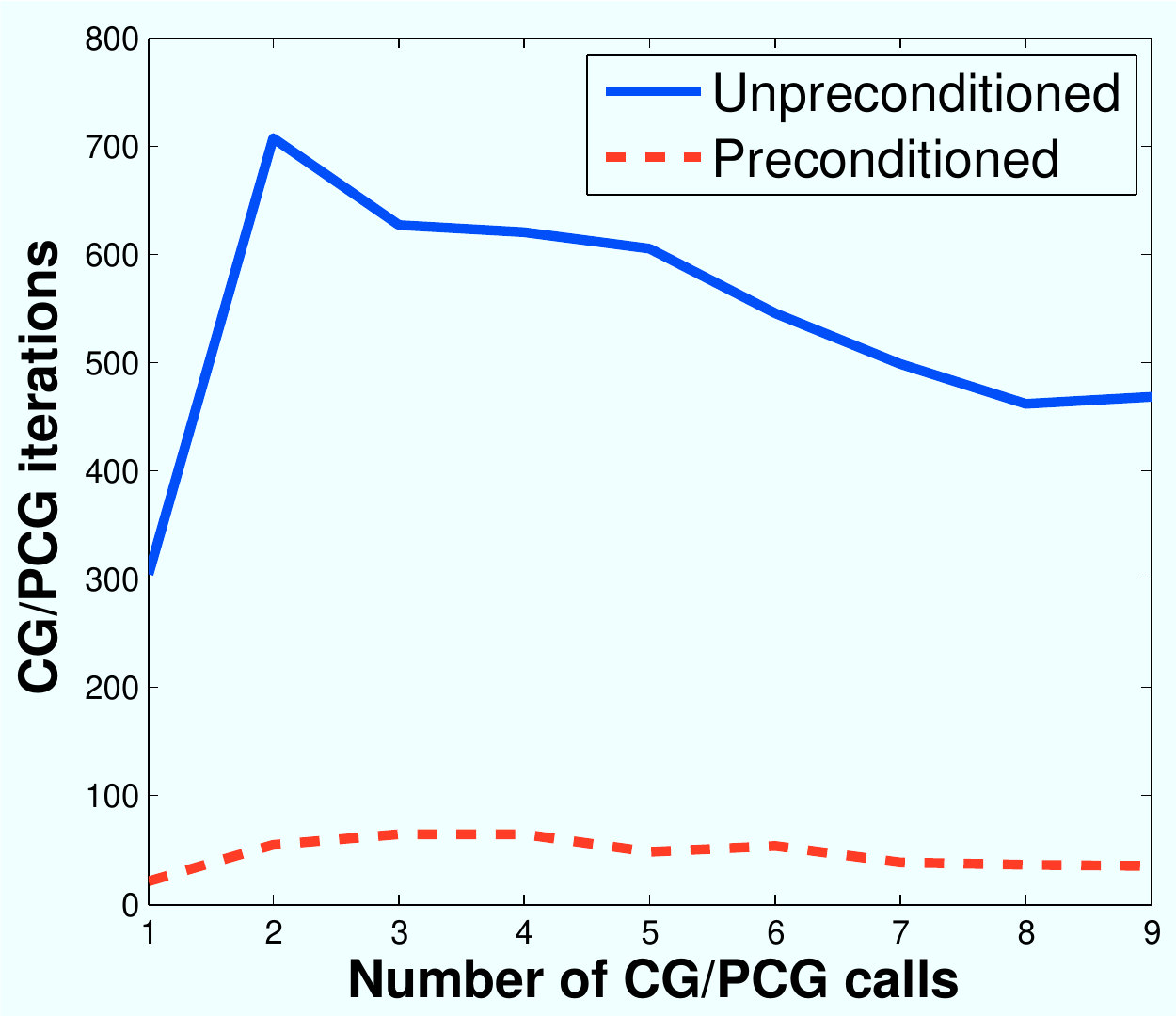}		
         \end{subfigure}
         \caption{Significant reduction in the number of PCG iterations when preconditioner $\tilde{N}$ is applied in an approximate setting. 
         For this example, matrix $A$ in $\hat{B}$ is a $2D$ DCT, $n=2^{16}$, $m=n/4$, $c=5.39e$-$2$ and $\mu=1.0e$-$5$.
         }
	\label{fig8}%
\end{figure}

\section{Continuation}\label{sec:cont}
In the previous section we have shown that by using preconditioning, the spectral properties of systems which arise can be improved. However, for initial stages of pdNCG
a similar result can be achieved without the cost of having to apply preconditioning. In particular, at initial stages 
the spectrum of $\hat{B}$ can be controlled to some extent through inexpensive continuation. Whilst preconditioning is enabled only at later stages of the process. 
Briefly by continuation it is meant that a sequence of ``easier" subproblems are solved, instead of solving directly problem \eqref{prob2}. 
The reader is referred to Chapter $11$ in \cite{mybib:NocedalWright} for a survey on continuation methods in optimization.

 In this paper we use a similar continuation framework to \cite{ctnewtonold,ctpdnewton}. In particular, 
 a sequence of sub-problems \eqref{prob2} are solved, where each of them is parameterized by $c$ and $\mu$ simultaneously.
 Let $\tilde{c}$ and $\tilde{\mu}$ be the final parameters for which problem \eqref{prob2} must be solved. Then
 the number of continuation iterations $\vartheta$ is set to be the maximum order of magnitude between $1/\tilde{c}$ and $1/\tilde{\mu}$.
 For instance, if $\tilde{c}=1.0e$-$2$ and $\tilde{\mu}=1.0e$-$5$ then $\vartheta := \max(2,5) = 5$.
 If $\vartheta \ge 2$, then the initial parameters $c^0$ and $\mu^0$ are both always set to $1.0e$-$1$ and the intervals $[c^0,\tilde{c}]$ and $[\mu^0,\tilde{\mu}]$
 are divided in $\vartheta$ equal subintervals in logarithmic scale. For all experiments that we have performed in this paper we have found that 
 this setting leads to a generally acceptable improvement over pdNCG without continuation.
 The pseudo-code of the proposed continuation framework is shown in Figure \ref{fig:1}.
 
 \begin{figure}
\begin{algorithmic}[1]
\vspace{0.1cm}
\STATE \textbf{Outer loop:} For $j=0,1,2,\ldots ,\vartheta$, produce $(c^j,\mu^j)_{j=0}^\vartheta$.
\STATE \hspace{0.5cm}\textbf{Inner loop:} Approximately solve the subproblem
\begin{equation*}
 \mbox{minimize} \  f_{c^j}^{\mu^j}(x)
\end{equation*}
\hspace{0.5cm} using pdNCG and by initializing it with the  solution\\ 
\hspace{0.5cm} of the previous subproblem.
\end{algorithmic}
\caption{Continuation framework}
\label{fig:1}
\end{figure}

Figure \ref{fig3} shows the performance of pdNCG for three cases, no continuation with preconditioning, continuation with preconditioning through the whole process 
and continuation with preconditioning only at later stages. The vertical axis of Figure \ref{fig3} shows the relative error $\|x^k-x_{\tilde{c},\tilde{\mu}}\|_2/\|x_{\tilde{c},\tilde{\mu}}\|_2$.
The optimal $x_{\tilde{c},\tilde{\mu}}$ is obtained by using pdNCG with parameter tuning set to recover a highly accurate solution. The horizontal axis shows the CPU time.
The problem is an iTV problem were matrix $A$ is a partial $2D$ DCT, $n=2^{16}$, $m=n/4$, $c=5.39e$-$2$
and $\rho=5.0e$-$1$. The final smoothing parameter $\tilde{\mu}$ is set to $1.0e$-$5$. For the experiment that preconditioning is used only at later stages of continuation;
preconditioning is enabled when $\mu^j \le 1.0e$-$4$, where $j$ is the counter for continuation iterations. 
All experiments are terminated when the relative error $\|x^k-x_{\tilde{c},\tilde{\mu}}\|_2/\|x_{\tilde{c},\tilde{\mu}}\|_2 \le 1.0e$-$1$. Solving approximately the problem is an acceptable
practise since the problem is very noisy (i.e. signal-to-noise-ratio is $10$ dB) and there is not much improvement of the reconstructed image if more accurate solutions are requested.
Finally, all other parameters of pdNCG were set to the same values for all three experiments.
Observe in Figure \ref{fig3} that continuation with preconditioning only at late stages was the best approach for this problem. 

\begin{figure}
\centering
	\begin{subfigure}[b]{0.7\textwidth}
		\includegraphics[width=\textwidth]{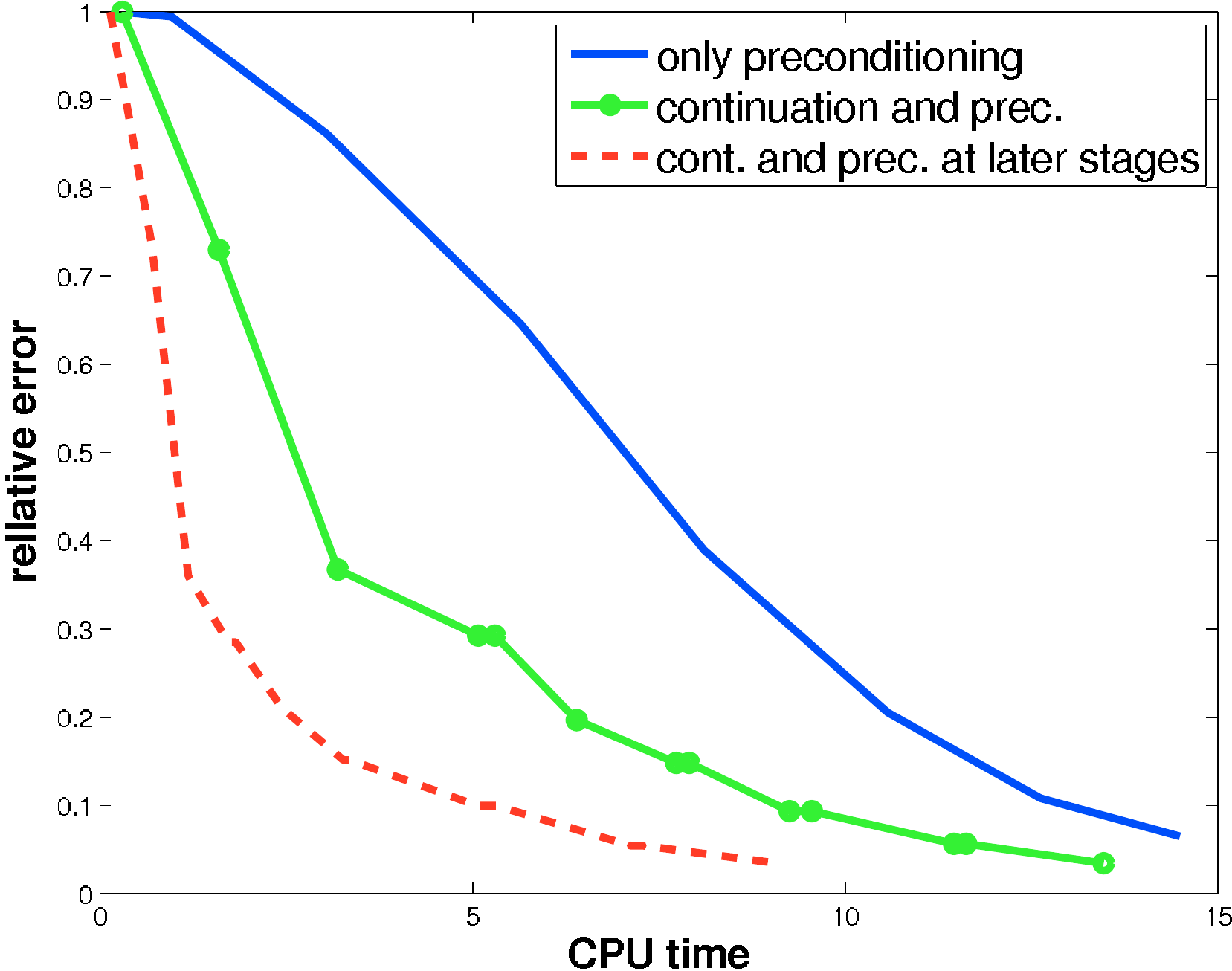}		
         \end{subfigure}
         \caption{Performance of pdNCG for three different settings, i) no continuation with preconditioning, ii) continuation with preconditioning through all iterations and iii) continuation 
         with preconditioning only at later stages. The vertical axis presents the relative error $\|x^k-x_{\tilde{c},\tilde{\mu}}\|_2/\|x_{\tilde{c},\tilde{\mu}}\|_2$, where $x_{\tilde{c},\tilde{\mu}}$ is 
         the optimal solution for the parameter setting $\tilde{c}$, $\tilde{\mu}$ in problem \eqref{prob2}.}
	\label{fig3}%
\end{figure}

\section{Numerical Experience}\label{secNumExp}

In this section we demonstrate the efficiency of pdNCG against a state-of-the-art method on  $\ell_1$-analysis with coherent and redundant dictionaries and iTV problems. 
In what follows we briefly discuss existing methods, we describe the setting of the experiments and finally numerical
results are presented. All experiments that are demonstrated in this paper can be reproduce by downloading the software from 
\url{http://www.maths.ed.ac.uk/ERGO/pdNCG/}.

\subsection{Existing Algorithms}\label{sec:algos}

Although the field of developing algorithms for iTV problems such as image denoising, deblurring and in-painting is densely populated, this is not the case for general CS problems with coherent and redundant dictionaries. 
For example, the solvers
NestA  and C-SALSA \cite{csalsa} can also solve \eqref{prob1} but they are applicable only in the case that $A A^\intercal = I$. Moreover, the solver Generalized Iterative Soft Thresholding (GISTA) in \cite{gista} 
requires that $\|A\|_2\le \sqrt{2}$ and $\|W^*\|_2\le 1$. This can be achieved by appropriate scaling of matrices $A$ and $W$, however, one needs to know a-priori an estimation of $\|A\|_2$ and $\|W^*\|_2$,
which might not be possible in practise. Another state-of-the-art method is the Primal-Dual Hybrid Gradient (PDHG) in \cite{pdhg}.
For this method no requirements are needed for matrices $A$ and $W$. 
PDHG has been reported to be very efficient for imaging applications such as denoising and deblurring, for which
matrix $A$ is the identity or a square and full-rank matrix which is inexpensively diagonalizable. Unfortunately, this is not always the case for the CS problems which we are interested in. 
On the contrary, the solver TFOCS \cite{convexTemplates} with implementation \url{http://cvxr.com/tfocs/} has been proposed for the solution of signal reconstruction problems without requiring
conditions on matrices $A$ and $W$ neither a matrix inversion at every iteration.
For the above reasons, in this section we compare pdNCG only with TFOCS. We think that this is a fair comparison since both methods, pdNCG and TFOCS, are developed to solve general signal reconstruction
problems, rather than focusing on few cases.



\subsection{Equivalent Problems}\label{subset:equiv}
Algorithms pdNCG and TFOCS implement different problems. In particular pdNCG solves problem \eqref{prob2},
while TFOCS solves the dual problem of 
\begin{equation}
  \begin{array}{lll}\label{prob4}
        & \displaystyle\min_{x\in\mathbb{R}^{n}} & \|W^*x\|_1 + \frac{\mu_{T}}{2}\|x - x^0\| \\
        &\mbox{subject to:}& \|Ax- b \|_2 \le \epsilon, \\
  \end{array}
\end{equation}
where $\epsilon$ is a positive constant and $\mu_{T}$ regulates the smoothing of the dual objective function. 
The two problems \eqref{prob2} and \eqref{prob4} are approximately equivalent if the smoothing terms $\mu$ and $\mu_{T}$ are very small and $c$ in \eqref{prob2} 
is defined as $c:= 2/\lambda_T$, where $\lambda_T$ is the optimal Lagrange multiplier of \eqref{prob4}. The exact optimal Lagrange multiplier $\lambda_T$ is not known a-priori.
However it can be calculated by solving to high accuracy the dual problem of \eqref{prob4} with TFOCS. Unforunately, the majority of the experiments
that we perform are large scale and TFOCS converges slowly for $\mu_{T}\approx 0$. For this reason, we first solve \eqref{prob4} using TFOCS
with a moderate $\mu_T$, in order to obtain an approximate optimal Lagrange multiplier $\lambda_T$ in reasonable CPU time. 
Then we set $c:= 2\gamma/\lambda_T$, where $\gamma$ is a small positive constant which is calculated experimentally 
such that the two solvers produce similar solution. 
Moreover, 
$\epsilon:= \|b - \tilde{b}\|_2$, where $\tilde{b}$ are the noiseless sampled data. If $\tilde{b}$ is not available, then $\epsilon$ is set such that
a visually pleasant solution is obtained.
The smoothing parameter $\mu_T$  of TFOCS is set such that the obtained solution, denoted by $x_T$, has moderately small relative error
$\|x_{T}-\tilde{x}\|_2/\|\tilde{x}\|_2$, 
where $\tilde{x}$ is
the known optimal noiseless solution.
Again if $\tilde{x}$ is not available, $\mu_T$ is set such that a visually pleasant reconstruction is obtained.
The smoothing parameter $\mu$ of pdNCG is set such that $\|x_{pd}-x_T\|_2/\|x_{T}\|_2$ is small, where $x_{pd}$
is the approximate optimal solution obtained by pdNCG.
For all experiments that were performed the relative error between 
the solution of TFOCS and pdNCG is of order $1.0e$-$2$.

\subsection{Termination Criteria, Parameter Tuning and Hardware}
The version $1.3.1$ of TFOCS has been used. The termination criterion of TFOCS is by default the relative step-length. The tolerance for this criterion is set to the default 
value, except in cases that certain suggestions are made in TFOCS software package or the corresponding paper \cite{convexTemplates}. 
The default Auslender \& Teboulle's single-projection method
is used as a solver for TFOCS. Moreover, as suggested by the authors of TFOCS, appropriate scaling is performed on matrices $A$ and $W$,
such that they have approximately the same Euclidean norms. All other parameters are set to their default values, except in cases
that specific suggestions are made by the authors. Generally, regarding tuning of TFOCS, substantial effort has been made in 
guaranteeing that problems are not over-solved.

Regarding pdNCG, the solver is employed until an approximately optimal solution is obtained, denoted by $x_{pd}$,
such that $\|x_{pd}-\tilde{x}\|_2/\|\tilde{x}\|_2 \le \|x_{T}-\tilde{x}\|_2/\|\tilde{x}\|_2$.
Parameter $\eta$ in \eqref{bd59} is set to $1.0e$-$1$, the maximum number of backtracking line-search iterations
is fixed to $10$. Moreover, the backtracking line-search parameters $\tau_1$ and $\tau_2$ in step $4$ of pdNCG (Fig. \ref{fig:2}) are set to $9.0e$-$1$ and $1.0e$-$3$,
respectively. For iTV the preconditioner is a five-diagonal matrix, hence systems with it are solved exactly. For general $\ell_1$-analysis, systems are solved approximately
with the preconditioner
using $15$ CG iterations. Finally, the constant $\rho$ of the preconditioner in \eqref{bd10} is set 
to $5.0e$-$1$.

Both solvers are MATLAB implementations and all experiments are run on a MacBook Air running OS X $10.9.2$ (13C64) with 2 GHz Intel Core i7 processor
using MATLAB R2012a. 

\subsection{$\ell_1$-analysis}
In this subsection we compare TFOCS and pdNCG on the recovery of radio-frequency radar tones. This problem has been first demonstrated 
in subsection $6.5$ of \cite{convexTemplates}. We describe again the setting of the experiment. 
The signal to be reconstructed consists of two radio-frequency radar tones which overlap in time.  The amplitude of the tones differs
by $60$ dB. The carrier frequencies and phases are chosen uniformly at random. Moreover, noise is added such that the larger tone has SNR (signal-to-noise-ratio)
$60$ dB and the smaller tone has SNR $2.1e$-$2$ dB.
The signal is sampled at $2^{15}$ points, which corresponds to 
Nyquist sampling rate for bandwidth $2.5$ GHz and time period approximately $6.5e$$\mplus$$3$ ns.
The reconstruction is modelled as a CS problem where the measurement matrix $A\in\mathbb{R}^{m\times n}$ 
is block-diagonal with $\pm 1$ for entries, $n=2^{15}$ and $m=2616$, i.e. subsampling ratio $m/n \approx 7.9e$-$2$. Moreover,
$W\in\mathbb{R}^{n\times l}$ is a Gabor frame with $l=915456$. The results of the comparison are presented in Figure \ref{figl1}. 
Observe that both solvers recovered a solution of similar accuracy but pdNCG was $1.3$ times faster. It is important to mention that the problems 
were not over-solved. TFOCS was tuned as suggested by its authors in a similar experiment which is shown in Subsection $6.5$ of \cite{convexTemplates}. 
This resulted in termination of TFOCS after $159$ iterations, which is considered as few for a first-order method.
\begin{figure}%
\centering
	\begin{subfigure}[b]{0.48\textwidth}
		\includegraphics[width=\textwidth]{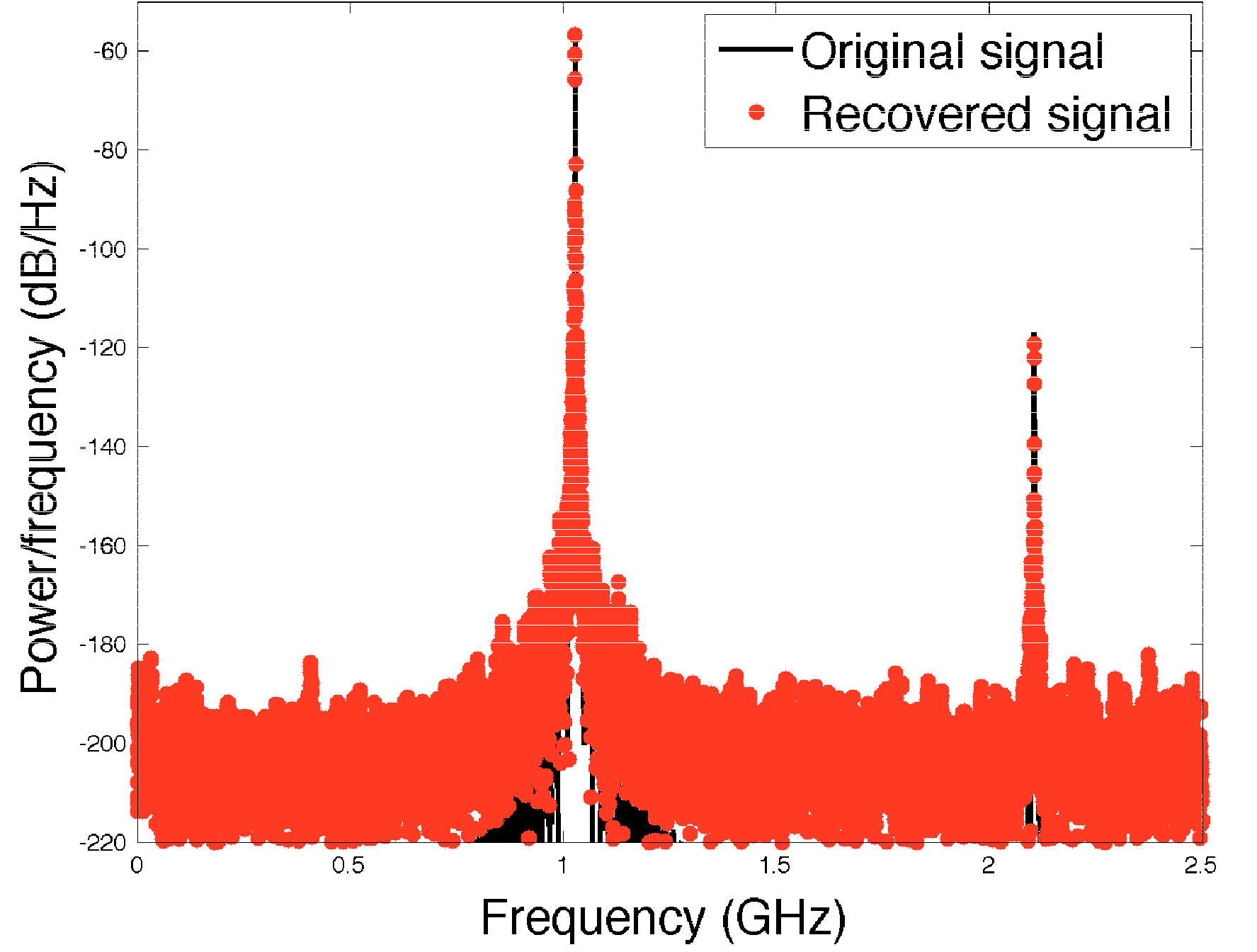}		
		\caption{TFOCS}
		\label{fig4}%
         \end{subfigure}
         \quad
	\begin{subfigure}[b]{0.48\textwidth}
		\includegraphics[width=\textwidth]{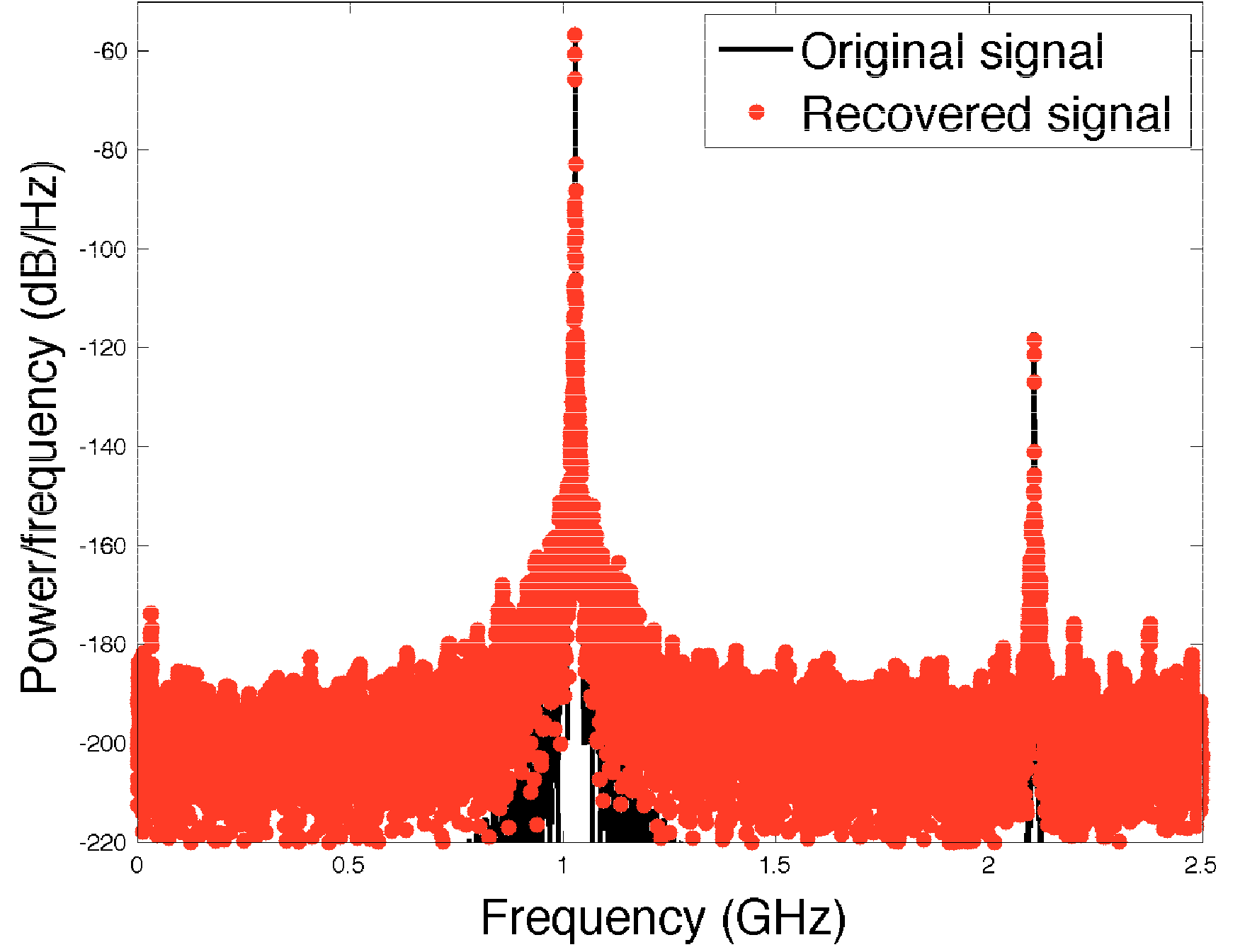}	
		\caption{pdNCG}
		\label{fig5}%
         \end{subfigure}
	\caption{Reconstruction of two radio-frequency radar tones by TFOCS and pdNCG using $\ell_1$-analysis. In Figure \ref{fig4} the reconstructed signal by TFOCS is shown. The signal has relative error $7.27e$-$4$
	and it required $620$ seconds CPU time to be reconstructed. In Figure \ref{fig5}
	the reconstructed signal by pdNCG is shown. The signal has relative error $5.87e$-$4$ and it required $480$ seconds CPU time to be reconstructed.
	}
	\label{figl1}%
\end{figure}

\subsection{Isotropic Total-Variation}
In this subsection we compare TFOCS and pdNCG on a synthetic image reconstruction problem which is modelled using iTV.
The image to be reconstructed is the well-known Shepp-Logan phantom image of size $256\times 256$ pixels shown in Figure \ref{figPhantomclean}. 
Noise is added to the original image such that it has pick-signal-to-noise-ratio (PSNR) $22.2$ dB, where
\begin{equation}\label{eq130}
\mbox{PSNR}(x) :=  20 \log_{10}\left(\frac{\sqrt{n_hn_v}}{\|x - \tilde{x}\|_F}\right),
\end{equation}
$x\in\mathbb{R}^{n_h\times n_v}$ represents an image with pixels values between zero and one, $n_h$ is the number the horizontal pixels, $n_v$ is the number of vertical pixels,
$\tilde{x}$ is an approximate optimal solution and
and $\|\cdot\|_F$ is the Frobenius norm.   
The noisy image is shown in Figure \ref{figPhantomnoise}. Then $25\%$ of all linearly projected noisy pixels are chosen uniformly at random, which consist the noisy sampled data $b$.
The projection matrix $A\in\mathbb{R}^{m\times n}$ 
is a partial $2D$ discrete cosine transform (DCT) with $n=256^2$ and $m\approx n/4$. The results of the comparison are shown in Figures \ref{fig6} and \ref{fig7}.
Both solvers reconstructed an image of similar quality, while pdNCG was $3.7$ times faster.
\begin{figure}%
\centering
	\begin{subfigure}[b]{0.48\textwidth}
		\includegraphics[width=\textwidth]{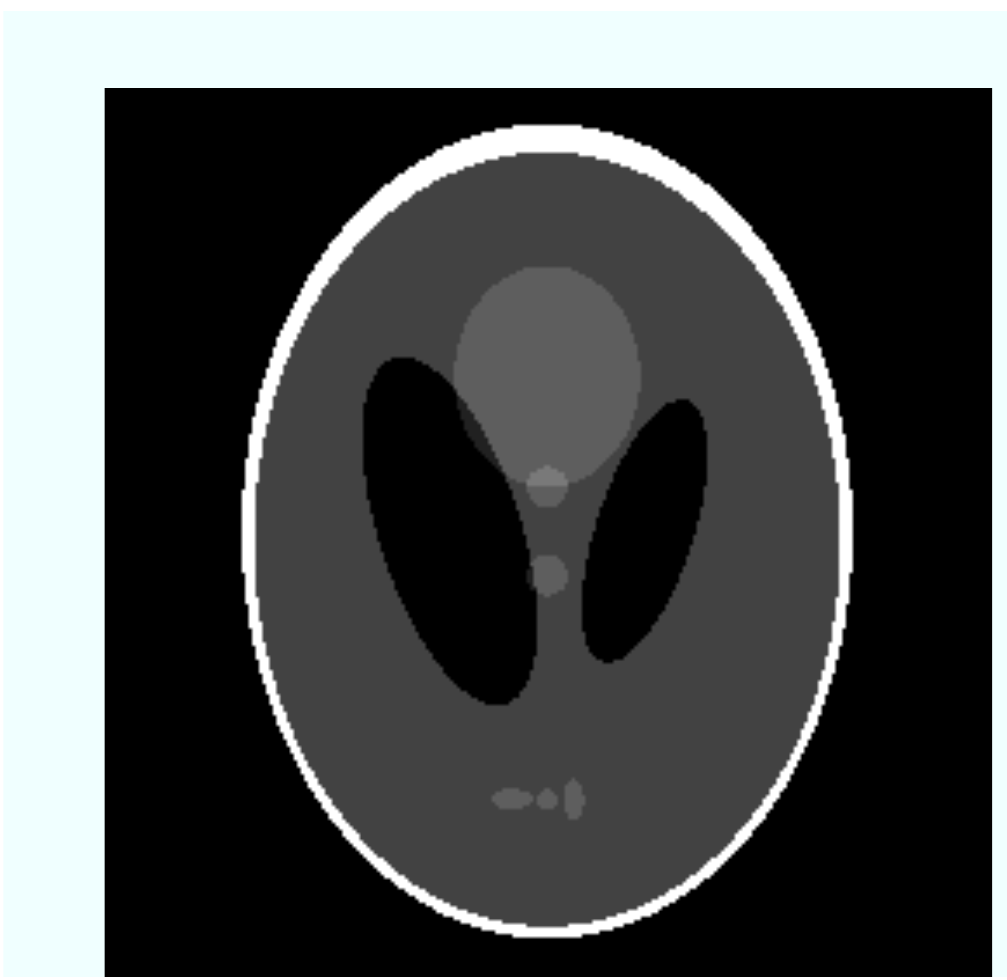}		
		\caption{Shepp-Logan phantom image}
		\label{figPhantomclean}%
         \end{subfigure}
         \quad
	\begin{subfigure}[b]{0.48\textwidth}
		\includegraphics[width=\textwidth]{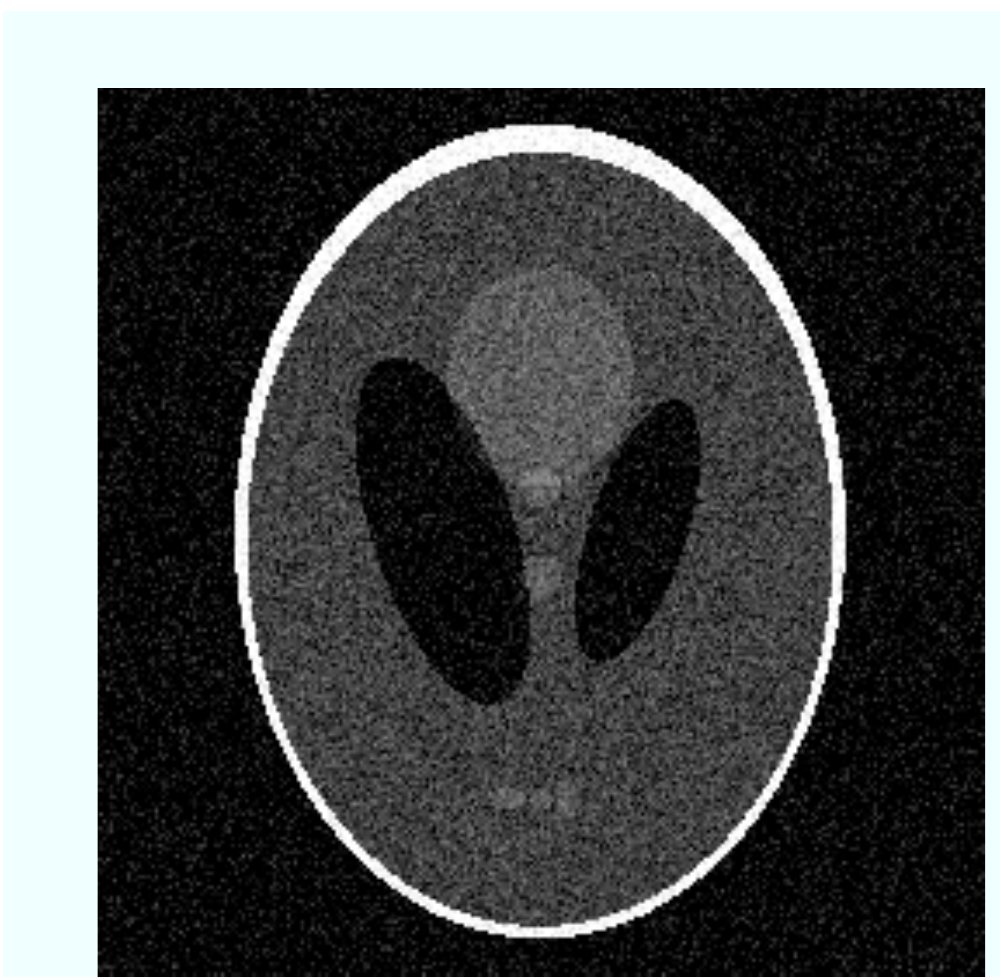}	
		\caption{Noisy version}
		\label{figPhantomnoise}%
         \end{subfigure}
	\begin{subfigure}[b]{0.48\textwidth}
		\includegraphics[width=\textwidth]{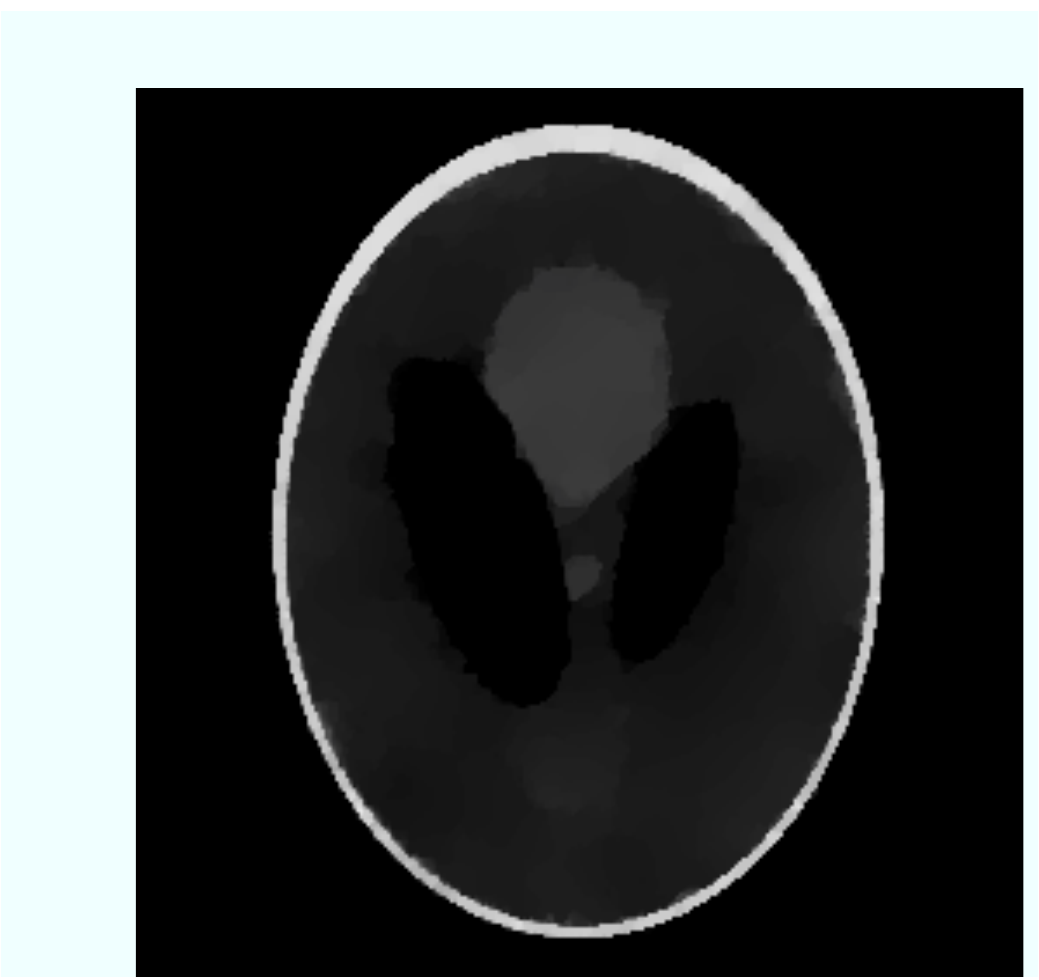}		
		\caption{Reconstructed by TFOCS}
		\label{fig6}%
         \end{subfigure}
         \quad
	\begin{subfigure}[b]{0.48\textwidth}
		\includegraphics[width=\textwidth]{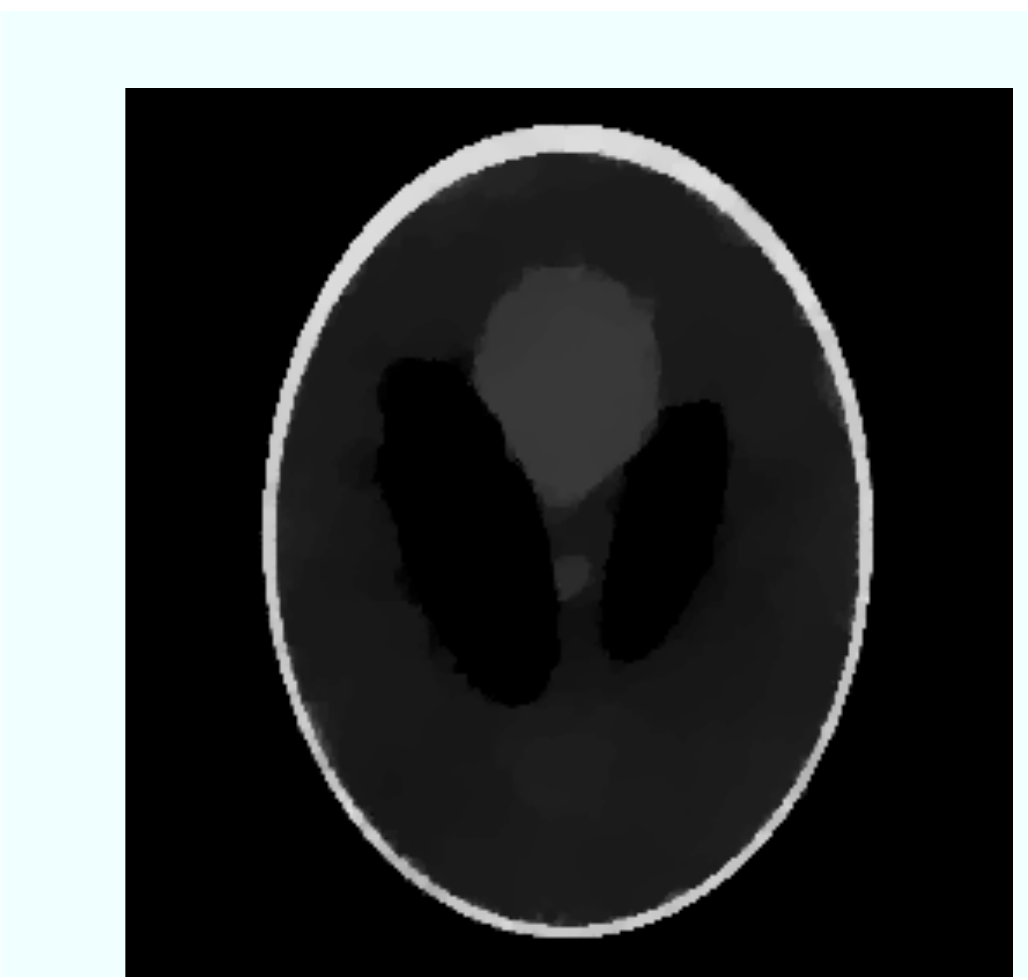}	
		\caption{Reconstructed by pdNCG}
		\label{fig7}%
         \end{subfigure}
	\caption{Figure \ref{figPhantomclean} is the noiseless Shepp-Logan phantom image, $256\times 256$ pixels.
	Figure \ref{figPhantomnoise} is the noisy version of the image with PSNR $22.2$ dB. 
	Figure \ref{fig6} is the reconstructed image by TFOCS using iTV. The image has PSNR $17.8$ dB
	and it required $60.7$ seconds CPU time to be reconstructed. Figure \ref{fig7}
	is the reconstructed image by pdNCG using iTV. The image has relative error $17.8$ dB and it required $16.4$ seconds CPU time to be reconstructed.
	}
	\label{figiTV}%
\end{figure}

\subsection{Single-Pixel Camera}
We now compare TFOCS with pdNCG on realistic image reconstruction problems where the data have been sampled
using a single-pixel camera \url{http://dsp.rice.edu/cscamera}. Briefly a single-pixel camera samples random linear projections of pixels
of an image, instead of directly sampling pixels. The problem set can be downloaded from \url{http://dsp.rice.edu/cscamera}.
In this set there are in total five sampled images, the dice, the ball, the mug the letter R and the logo. Each image has $64\times 64$ pixels.
The images are reconstructed using iTV. 
Unfortunately the optimal solutions are unknown for any requested subsampling level, additionally the level of noise is unknown. 
Hence the reconstructed images can only be compared 
by visual inspection.
For all four experiments $40\%$ of all linearly projected pixels are selected uniformly at random. 
The projection matrix $A\in\mathbb{R}^{m\times n}$, where $n=64^2$ and $m=0.4 n$, is a partial Walsh basis which takes values $0/1$ instead of $\pm 1$.
The reconstructed images by the solvers TFOCS and pdNCG are presented in Figure \ref{figcscamera}. Solver pdNCG was faster on four out of five problems.
On problems that pdNCG was faster it required on average $1.4$ times less CPU time. Although it would be possible to tune pdNCG such that it is faster on all problems, we
preferred to use its (simple) default tuning in order to avoid a biased comparison. 
\begin{figure}%
\centering
	\begin{subfigure}[b]{0.25\textwidth}
		\includegraphics[width=\textwidth]{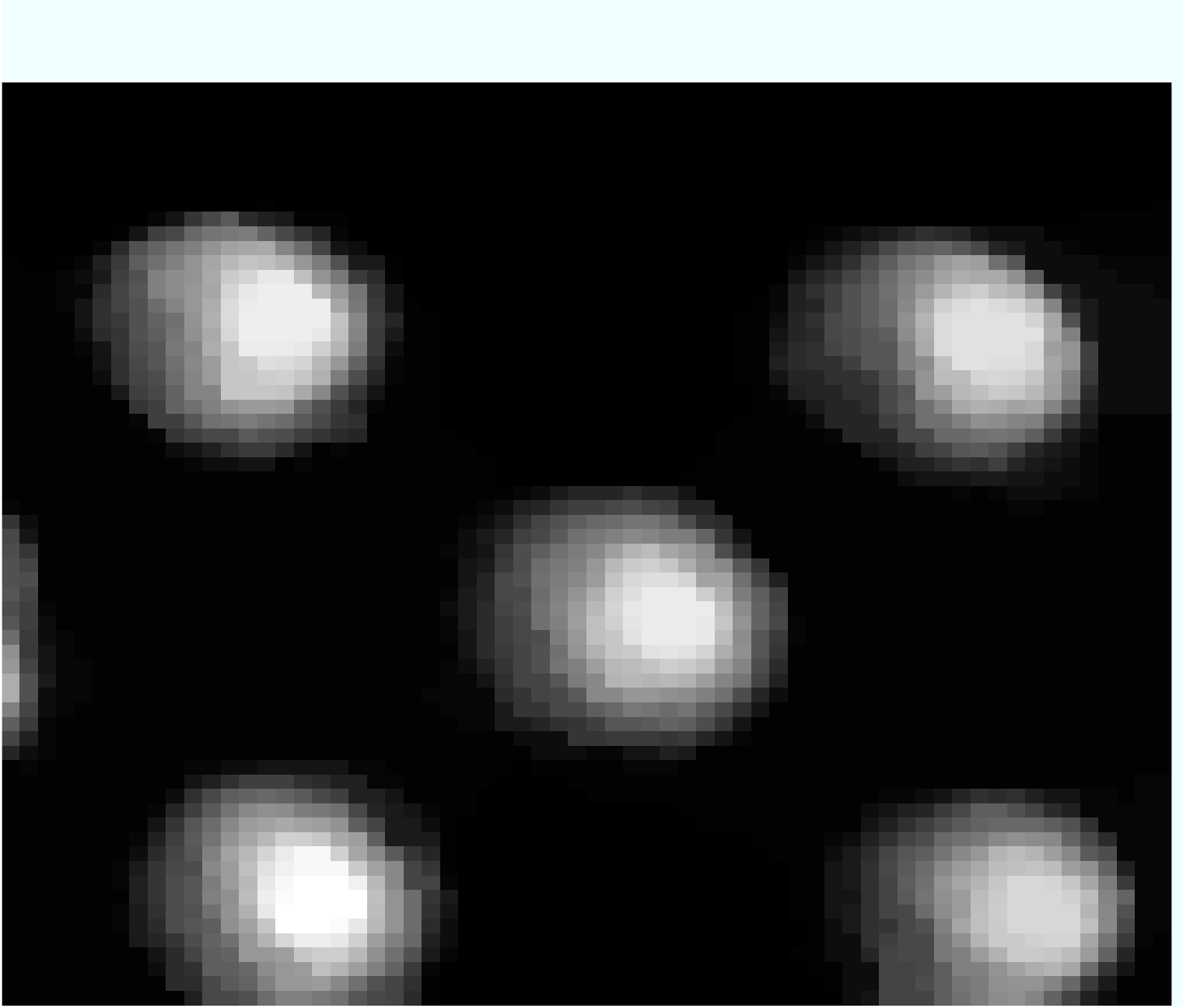}		
		\caption{TFOCS, $25$ sec.}
		\label{figcscamera_a}%
         \end{subfigure}
         \quad
	\begin{subfigure}[b]{0.25\textwidth}
		\includegraphics[width=\textwidth]{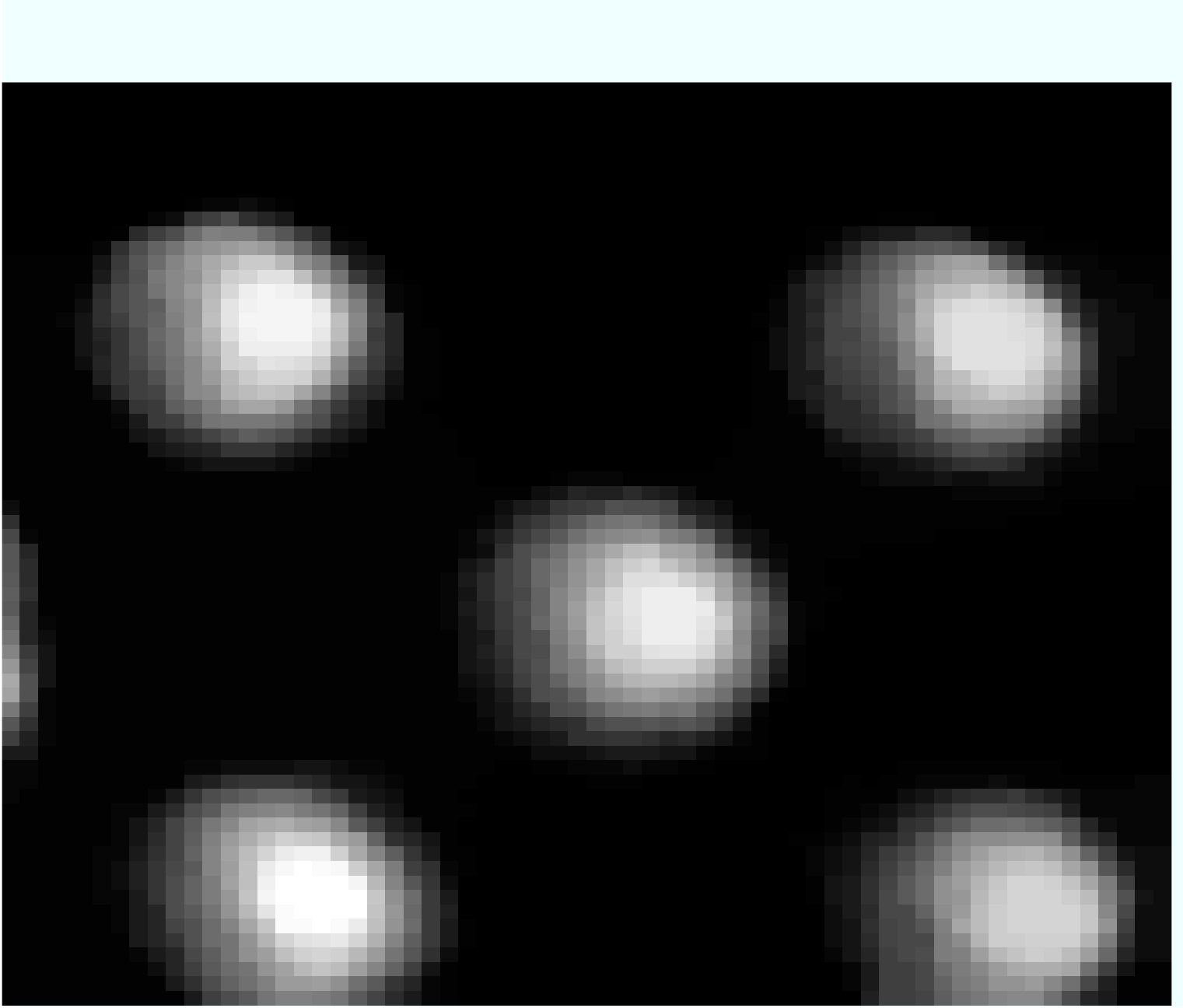}	
		\caption{pdNCG, $19$ sec.}
		\label{figcscamera_b}%
         \end{subfigure}
         \\
	\begin{subfigure}[b]{0.25\textwidth}
		\includegraphics[width=\textwidth]{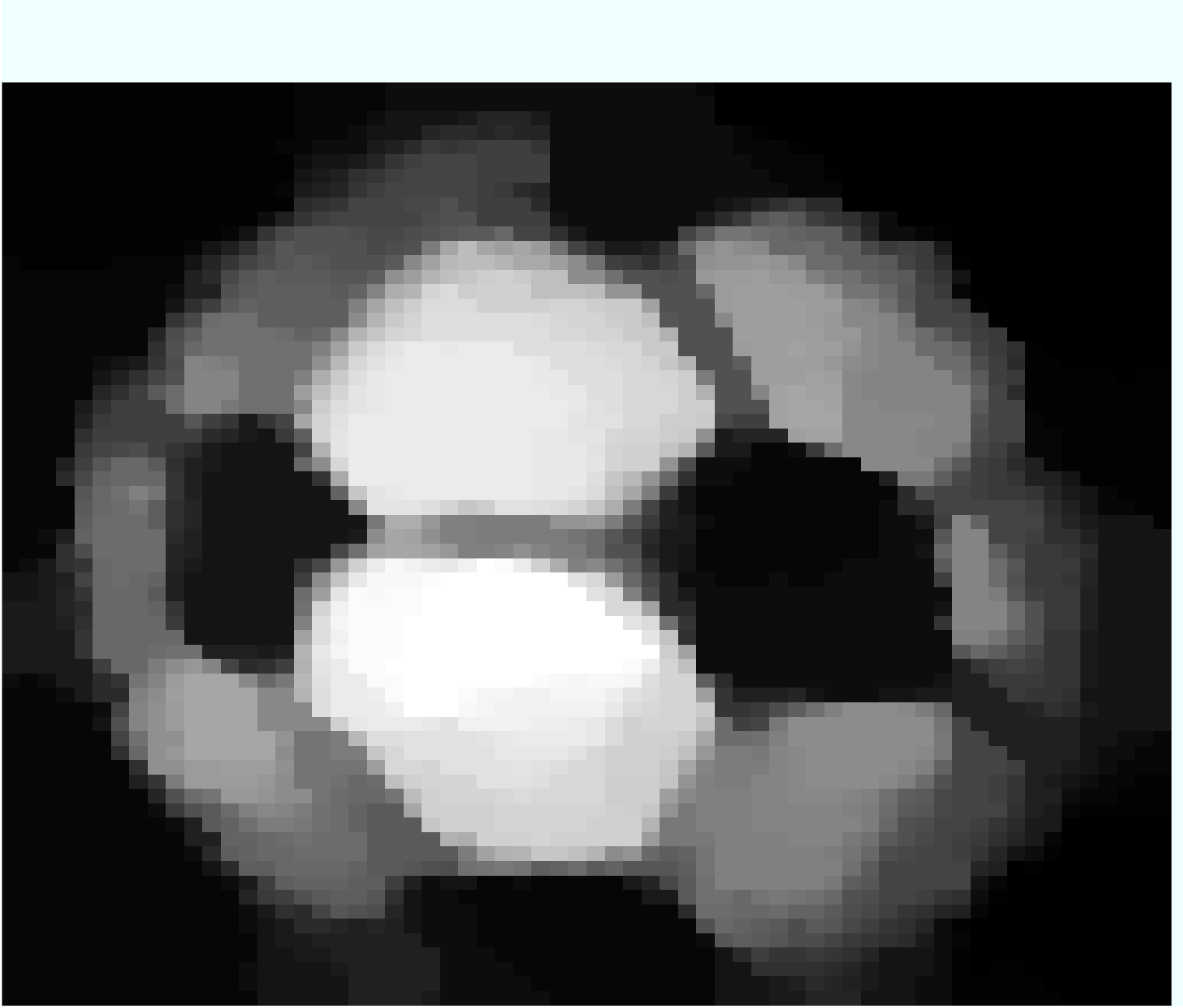}		
		\caption{TFOCS, $24$ sec.}
		\label{figcscamera_c}%
         \end{subfigure}
         \quad
	\begin{subfigure}[b]{0.25\textwidth}
		\includegraphics[width=\textwidth]{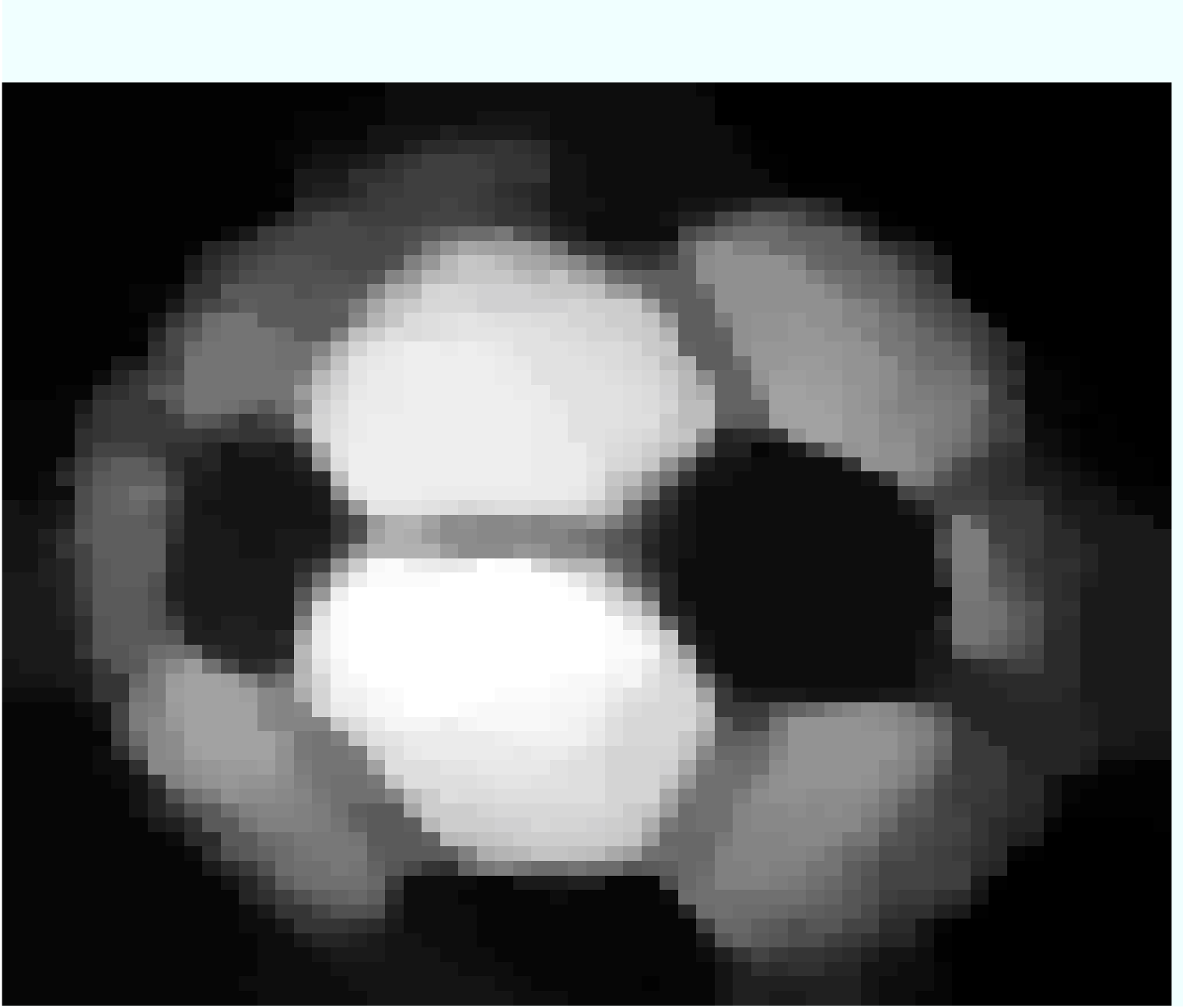}	
		\caption{pdNCG, $15$ sec.}
		\label{figcscamera_d}%
         \end{subfigure}
         \\
	\begin{subfigure}[b]{0.25\textwidth}
		\includegraphics[width=\textwidth]{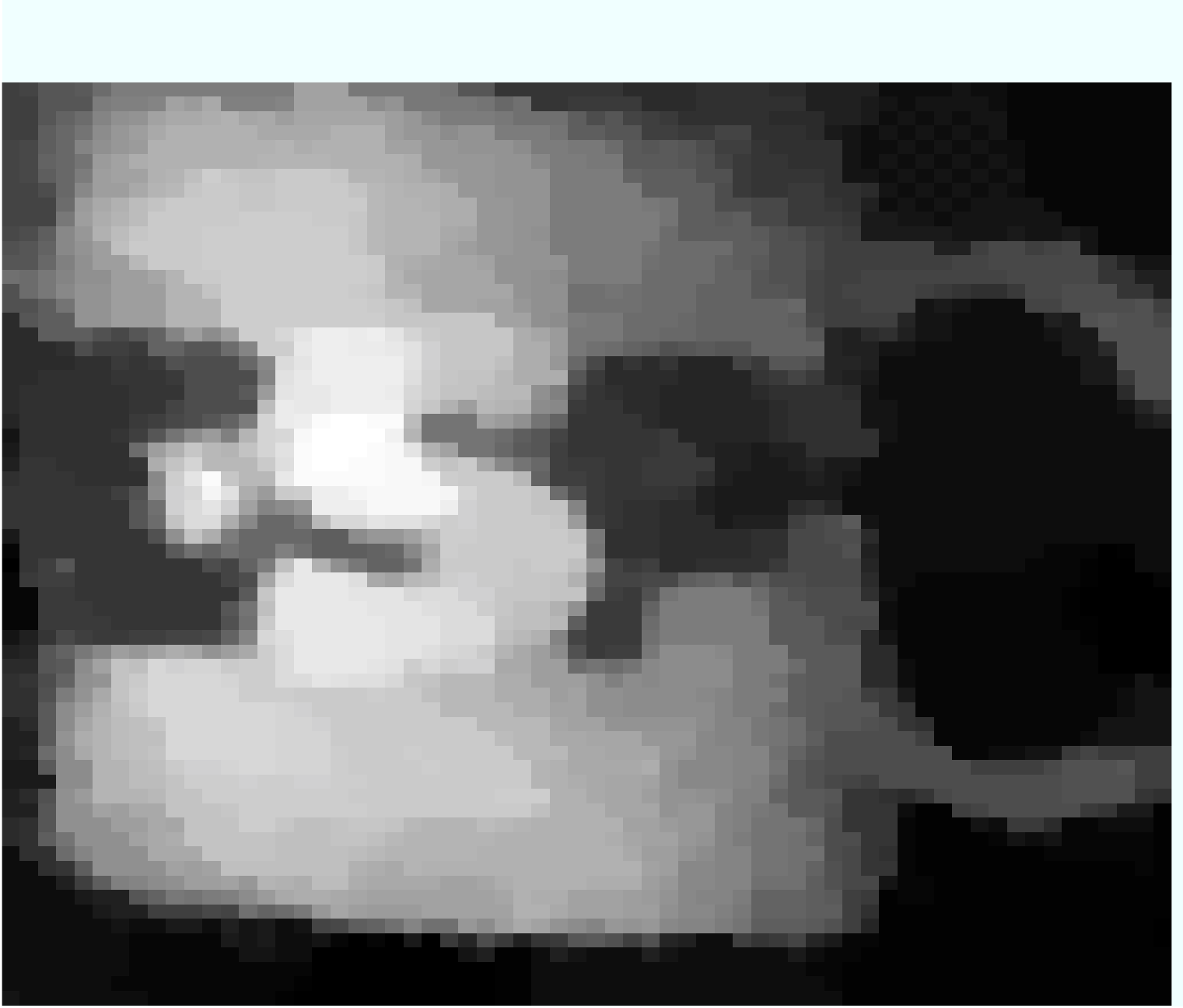}	
		\caption{TFOCS, $37$ sec.}
		\label{figcscamera_e}%
         \end{subfigure}
         \quad
	\begin{subfigure}[b]{0.25\textwidth}
		\includegraphics[width=\textwidth]{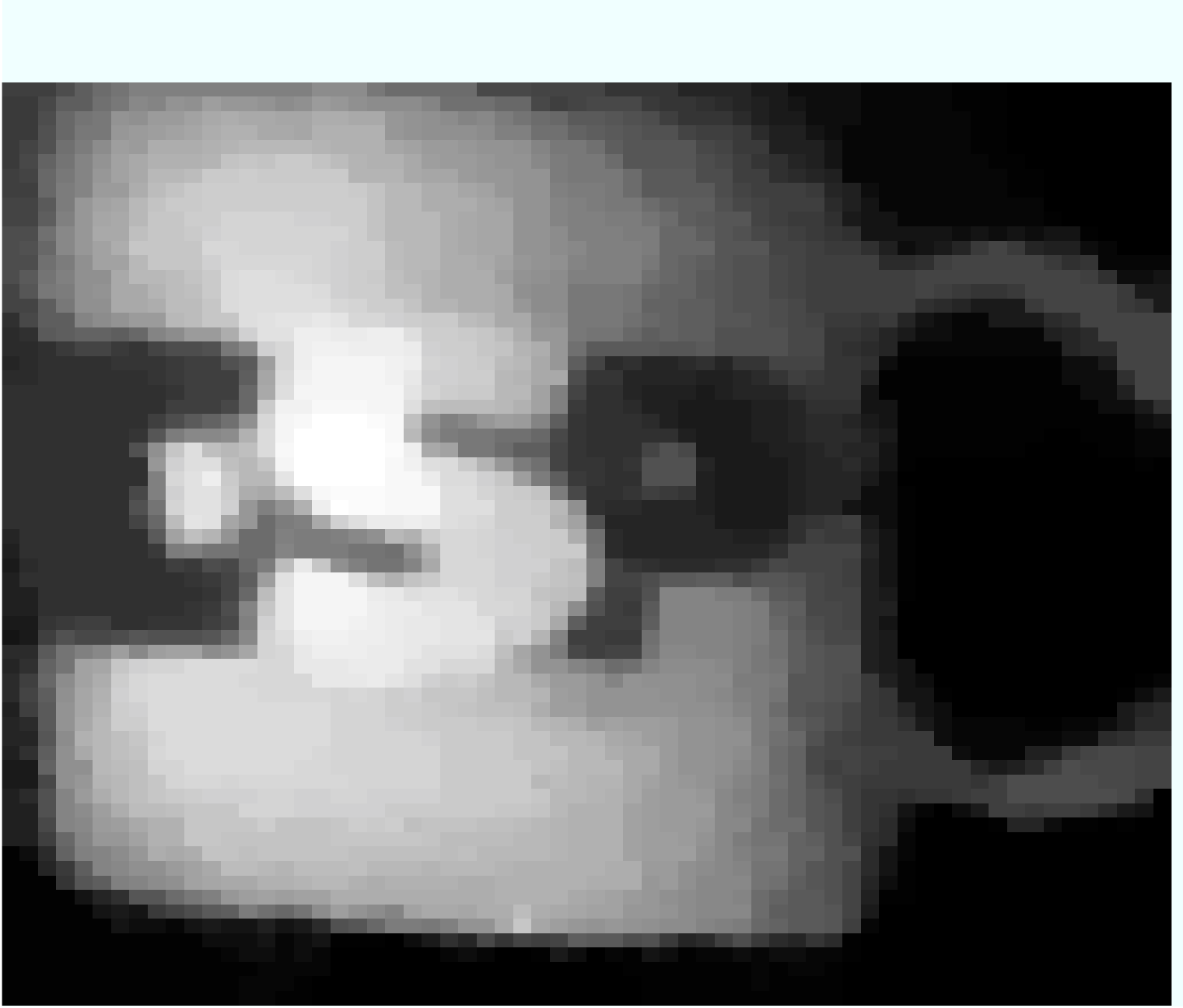}	
		\caption{pdNCG, $15$ sec.}
		\label{figcscamera_f}%
         \end{subfigure}
         \\
	\begin{subfigure}[b]{0.25\textwidth}
		\includegraphics[width=\textwidth]{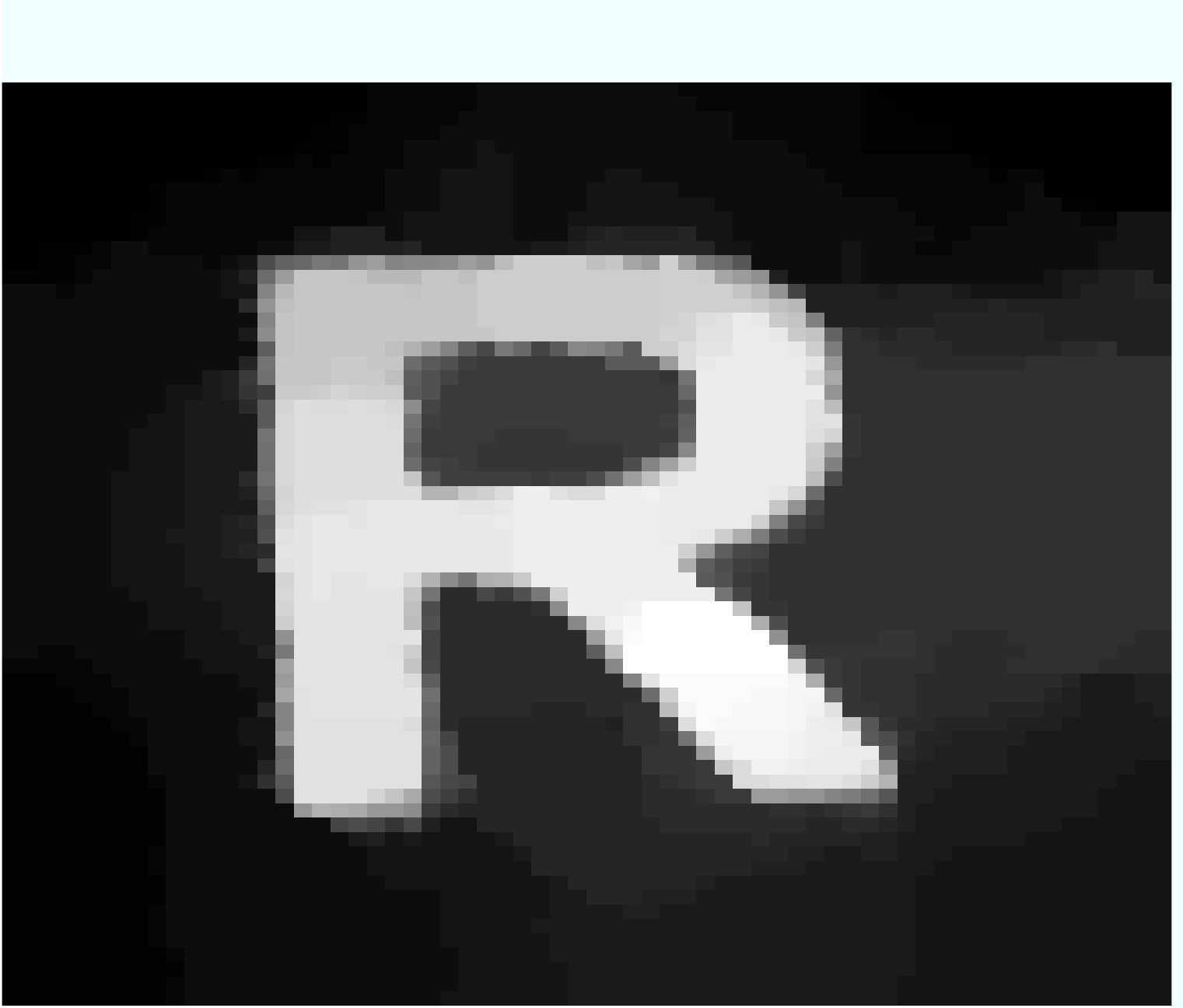}	
		\caption{TFOCS, $24$ sec.}
		\label{figcscamera_g}%
         \end{subfigure}
         \quad
	\begin{subfigure}[b]{0.25\textwidth}
		\includegraphics[width=\textwidth]{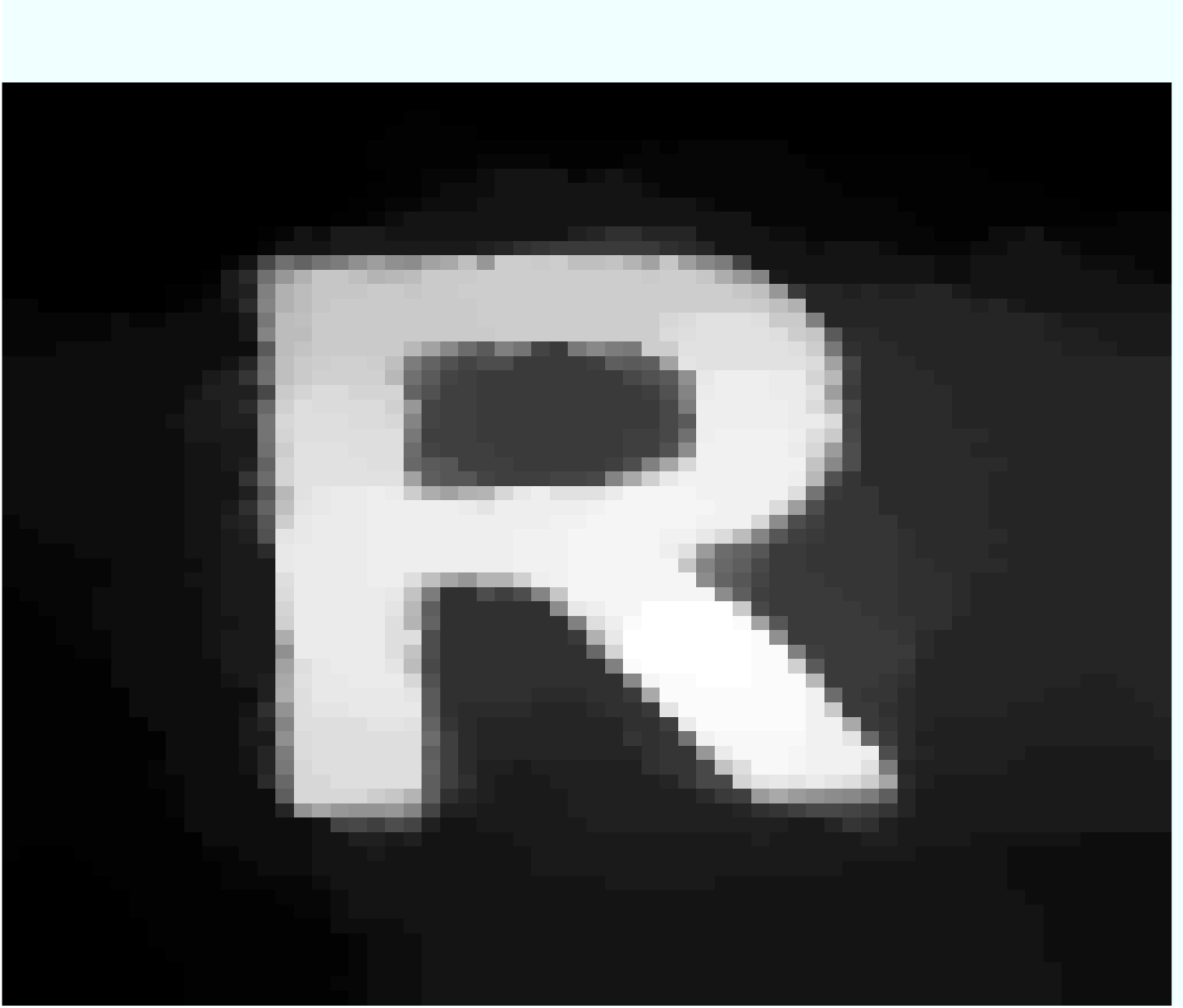}	
		\caption{pdNCG, $32$ sec.}
		\label{figcscamera_h}%
         \end{subfigure}
         \\
	\begin{subfigure}[b]{0.25\textwidth}
		\includegraphics[width=\textwidth]{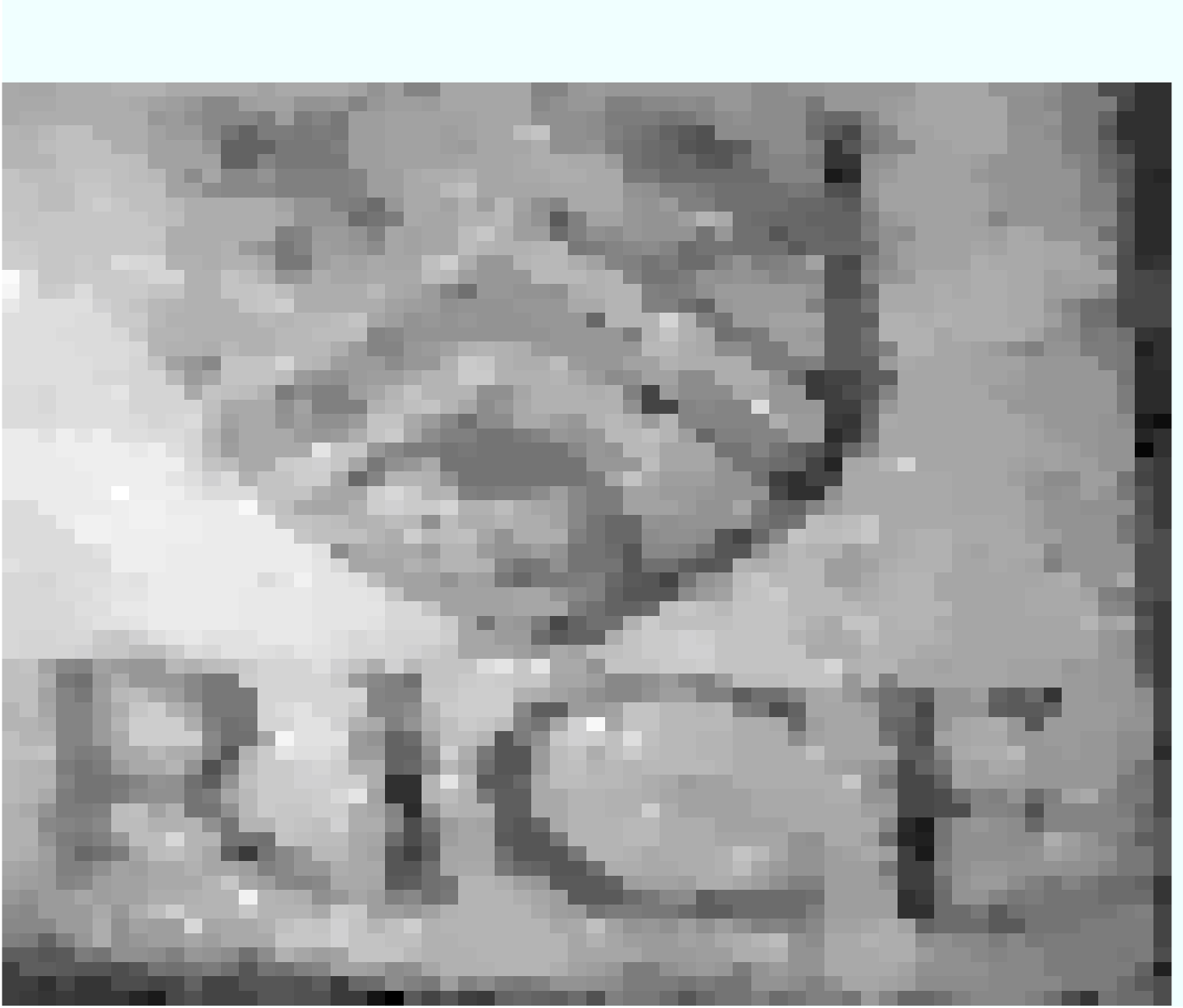}	
		\caption{TFOCS, $49$ sec.}
		\label{figcscamera_j}%
         \end{subfigure}
         \quad
	\begin{subfigure}[b]{0.25\textwidth}
		\includegraphics[width=\textwidth]{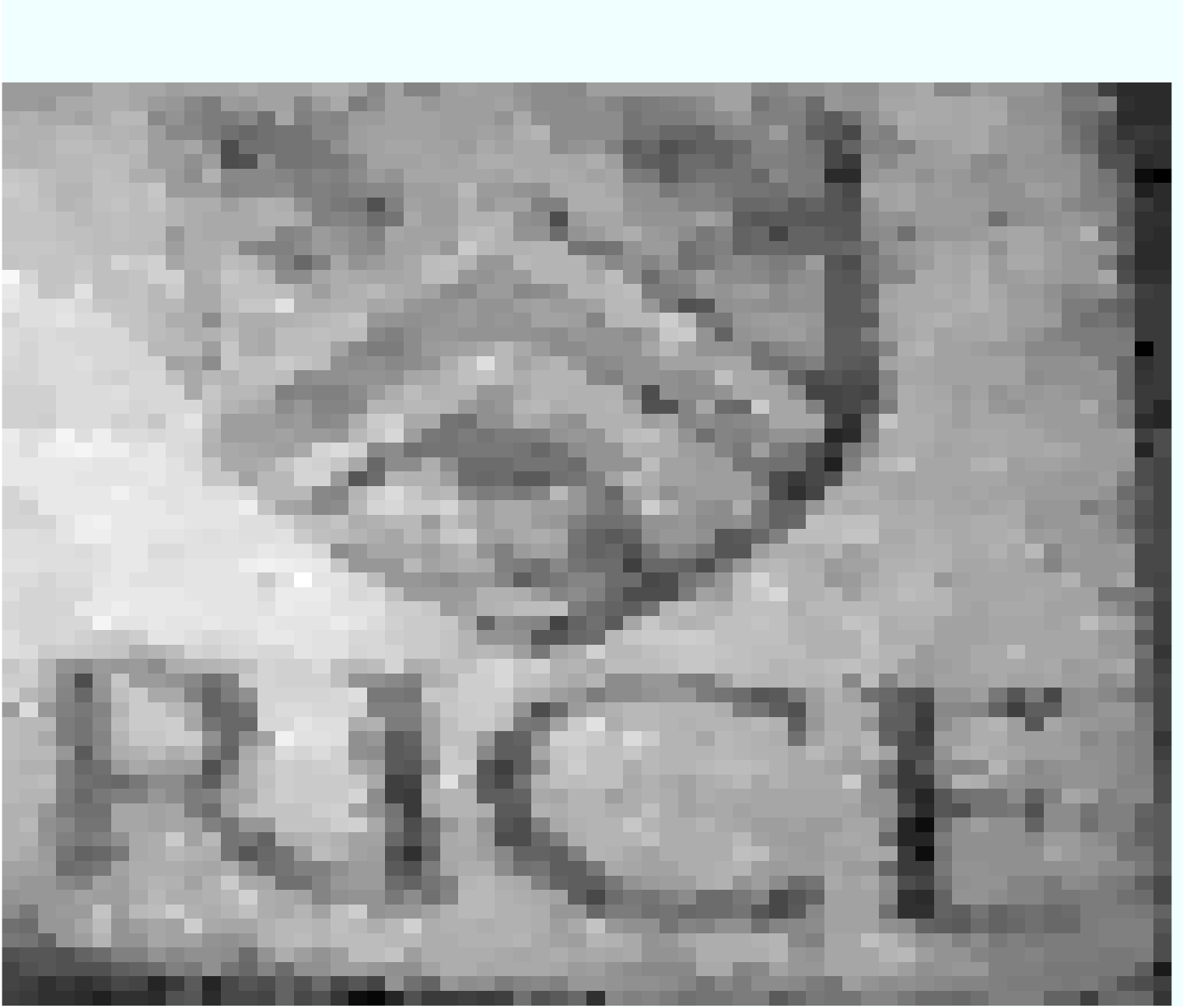}	
		\caption{pdNCG, $33$ sec.}
		\label{figcscamera_k}%
         \end{subfigure}
	\caption{Experiment on realistic image reconstruction where the samples are acquired using a single-pixel camera.
	The subcaptions of the figures show the required seconds of CPU time for the image to be reconstructed for each solver.
	}
	\label{figcscamera}%
\end{figure}

\section{Conclusions}\label{sec:concl}
Recently there has been great interest in the development of optimization methods for the solution of compressed sensing 
problems with coherent and redundant dictionaries.
The methods that have been developed so far are mainly first-order methods.
This is because first-order methods have inexpensive iteration cost and frequently 
offer fast initial progress in the optimization process. On the contrary, second-order methods are considered to be rather expensive. 
The reason is that often access to second-order information requires the solution of linear systems. In this paper we 
develop a second-order method, a primal-dual Newton Preconditioned Conjugate Gradients. We show
that \textit{approximate} solution of linear systems which arise is sufficient to speed up an iterative method and additionally
make it more robust. Moreover, we show that for compressed sensing problems inexpensive preconditioners can be designed that
speed up even further the approximate solution of linear systems. Extensive numerical experiments are presented which verify 
our arguments. In the theoretical front we prove convergence of pdNCG and local super-linear rate of convergence.

\bibliographystyle{plain}
\bibliography{KFandJG.bib}

\end{document}